\documentclass[11pt]{article}

\usepackage{amsmath, amssymb, amsthm} 
\usepackage{centernot}
\usepackage{mathpazo}
\usepackage[margin=2.5cm]{geometry}
\usepackage{color, graphicx, soul}
\usepackage{caption}
\usepackage{subcaption}
\usepackage{tikz}
\usepackage{pgfplots}
\pgfplotsset{compat=1.11}

\usepackage{hyperref}
\hypersetup{
	colorlinks=true,        
	linkcolor=blue,         
	citecolor=blue,         
	urlcolor=blue           
}

\usepackage{empheq}
\definecolor{myblue}{rgb}{.9, .9, 1}
\newcommand*\mybluebox[1]{%
	\colorbox{myblue}{\hspace{1em}#1\hspace{1em}}
}

\usepackage[capitalize]{cleveref}
\crefname{equation}{}{equations}
\crefname{chapter}{Appendix}{chapters}
\crefname{item}{}{items}
\crefname{figure}{Figure}{figures}
\makeatletter
\def\namedlabel#1#2{\begingroup
   \def\@currentlabel{#2}%
   \label{#1}\endgroup
}
\makeatother

\makeatletter
\def\th@plain{%
  \thm@notefont{}
  \itshape 
}
\def\th@definition{%
  \thm@notefont{}
  \normalfont 
}
\makeatother

\newtheorem{theorem}{Theorem}[section]
\newtheorem{lemma}[theorem]{Lemma}
\newtheorem{corollary}[theorem]{Corollary}
\newtheorem{proposition}[theorem]{Proposition}
\newtheorem{fact}[theorem]{Fact}
\newtheorem{assumption}[theorem]{Assumption}
\theoremstyle{definition}

\theoremstyle{definition}
\newtheorem{example}[theorem]{Example}
\theoremstyle{definition}
\newtheorem{remark}[theorem]{Remark}

\usepackage[shortlabels]{enumitem}
\setlist[enumerate]{nosep}

\parskip    6pt
\tolerance  3000

\allowdisplaybreaks

\newcommand{\menge}[2]{\{{#1}~\big |~{#2}\}} 
\newcommand{\mmenge}[2]{\bigg\{{#1}~\bigg |~{#2}\bigg\}} 
\newcommand{\Menge}[2]{\left\{{#1}~\Big|~{#2}\right\}} 

\newcommand{\ball}[2]{\mathbb{B}\left({#1};{#2}\right)}
\newcommand{\scal}[2]{\left\langle {#1},{#2} \right\rangle}
\newcommand{\lev}[1]{\operatorname{lev}_{\leq #1}}

\newcommand{\To}{\ensuremath{\rightrightarrows}}

\newcommand{\NN}{\ensuremath{\mathbb N}}
\newcommand{\nnn}{\ensuremath{{n\in{\mathbb N}}}}

\newcommand{\RR}{\ensuremath{\mathbb R}}
\newcommand{\RP}{\ensuremath{\mathbb{R}_+}}
\newcommand{\RPP}{\ensuremath{\mathbb{R}_{++}}}
\newcommand{\RM}{\ensuremath{\mathbb{R}_-}}
\newcommand{\RMM}{\ensuremath{\mathbb{R}_{--}}}

\newcommand{\argmin}{\ensuremath{\operatorname*{argmin}}}
\newcommand{\Limsup}{\ensuremath{\operatorname*{Limsup}}}

\newcommand{\inte}{\ensuremath{\operatorname{int}}}

\newcommand{\ran}{\ensuremath{\operatorname{ran}}}

\newcommand{\dom}{\ensuremath{\operatorname{dom}}}
\newcommand{\intdom}{\ensuremath{\operatorname{int}\operatorname{dom}}\,}
\newcommand{\epi}{\ensuremath{\operatorname{epi}}}
\newcommand{\gra}{\ensuremath{\operatorname{gra}}}
\newcommand{\Fix}{\ensuremath{\operatorname{Fix}}}
\newcommand{\Id}{\ensuremath{\operatorname{Id}}}

\newcommand{\sgn}{\ensuremath{\operatorname{sgn}}}

\begin{document}

\title{A Lyapunov-type approach to convergence of the Douglas--Rachford algorithm}

\author{
Minh N.\ Dao\thanks{CARMA, 
					University of Newcastle, 
					Callaghan, NSW 2308, Australia.
					E-mail:~\href{mailto:daonminh@gmail.com}{daonminh@gmail.com}}
\and
Matthew K.\ Tam\thanks{Institut f\"ur Numerische und Angewandte Mathematik,
					Universit\"at G\"ottingen,
					37083 G\"ottingen, Germany.
					\mbox{E-mail:}~\href{mailto:m.tam@math.uni-goettingen.de}{m.tam@math.uni-goettingen.de}}
}

\date{\today}

\maketitle

\begin{abstract}
The Douglas--Rachford projection algorithm is an iterative method used to find a point in the intersection of closed constraint sets. The algorithm has been experimentally observed to solve various nonconvex feasibility problems which current theory cannot sufficiently explain. In this paper, we prove convergence of the Douglas--Rachford algorithm in a potentially nonconvex setting. Our analysis relies on the existence of a Lyapunov-type functional whose convexity properties are not tantamount to convexity of the original constraint sets. Moreover, we provide various nonconvex examples in which our framework proves global convergence of the algorithm.
\end{abstract}

{\small
\noindent
{\bfseries Mathematics Subject Classification (MSC 2010):}
	90C26 $\cdot$ 
    47H10 $\cdot$ 
    37B25 

\noindent {\bfseries Keywords:}
Douglas--Rachford algorithm, 
feasibility problem,
global convergence,
graph of a function,
linear convergence,
Lyapunov function,
method of alternating projections,
Newton's method,
nonconvex set,
projection,
stability,
zero of a function
}

\section{Introduction}

The \emph{Douglas--Rachford algorithm (DRA)} is an iterative method used to solve the so-called  \emph{feasibility problem} which asks for a point in the intersection of closed constraint sets. The method generates a sequence by combining the \emph{nearest point projectors} of the individual constraint sets with exploiting the structure of problems in which these individual projectors can be efficiently computed or, at least, more efficiently than a \emph{direct} attempt to solve the original problem. The origins of the method can be traced to work of Douglas~\&~Rachford \cite{DR56} where it was proposed as a method for numerically solving problems arising in heat conduction. In the convex setting, the situation is fairly well understood; convergence is due to Lions~\&~Mercier \cite{LM79} and has since been refined in various works \cite{BCL04,BDM16,BM17,Sva11}.
	
In the absence of convexity, Borwein~\&~Sims \cite{BS11} established \emph{local convergence} of the DRA applied to a prototypical nonconvex feasibility problem involving a line and sphere. Here ``prototypical'' is meant in the sense of being an accessible model for imaging problems where phase is to be reconstructed from magnitude measurements whilst retaining the mathematical complexities. The same prototype has since studied in \cite{AB13,Gil16}. In their paper, Borwein~\&~Sims \cite{BS11} conjectured that the DRA was actually \emph{globally convergent}; a conjecture that was recently resolved in the affirmative by Benoist \cite{Ben15} through a cleverly constructed Lyapunov function. Global convergence of the DRA for prototypical combinatorial optimization problems has been also proven in \cite{ABT16,BD16}.

A general approach to convergence of the DRA without convexity was provided by Phan~\cite{Pha16}, to which work of Luke~\&~Hesse \cite{HL13} was a precursor. This approach follows related works, originating from \cite{LLM09}, which focus on the \emph{method of alternating projections} and assume that \emph{local regularity} properties of the underlying constraint sets hold near solutions of the problem (see also \cite{NR16}). The main difficulty in applying these results, lies in that they give little information regarding the \emph{region of convergence}, that is, the starting points from which the algorithm converges. Moreover, in practice, finding a point sufficiently close to a solution of a feasibility problem is often just as difficult as solving the original feasibility problem itself.

In this work, we generalize Benoist's approach to construction of Lyapunov-type functionals as a tool to prove convergence of the DRA. In particular, we show that convergence of the DRA is ensured provided that the constructed Lyapunov-type function possess appropriate convexity properties. We emphasize here that the convexity properties of the Lyapunov-type function are \emph{independent} of convexity of the underlying feasibility problem. As a consequence of our analysis, a region of convergence of the DRA can be identified by analyzing the Lyapunov-type function associated with the problem at hand.

The remainder of this paper is organized as follows. In Section~\ref{s:prelim}, we introduce the necessary notions of nonsmooth analysis. In Section~\ref{s:main}, we give a precise description of the Douglas--Rachford operator. In Section~\ref{s:stability of DRA} we provide conditions under which the DRA enjoys stability properties near fixed points. Our Lyapunov approach to convergence of the algorithm follows in Section~\ref{s:lyapunov}. Examples to which the results apply are considered in Section~\ref{s:examples} together with some counter-examples to demonstrate that both the method of alternating projections and \emph{Newton's method} can fail to converge to a solution even when the DRA does. In fact, global convergence of the DRA is obtained in all bar one of our provided examples.

\section{Preliminaries}
\label{s:prelim}
In this section we introduce and recall necessary notions and tools from nonsmooth analysis. Throughout this work, we assume that
\begin{empheq}[box=\mybluebox]{equation}
\text{$X$ is a Euclidean space,}
\end{empheq}
(\emph{i.e.,} a finite-dimensional real Hilbert space) with inner product $\scal{\cdot}{\cdot}$ and induced norm $\|\cdot\|$. Given two real Hilbert spaces $X$ and $Y$ with corresponding inner products denoted $\scal{\cdot}{\cdot}_X$ and $\scal{\cdot}{\cdot}_Y$, where appropriate, we use the product space $X\times Y$ which is a Hilbert space when equipped with the inner product defined by 
 \begin{equation}
 \scal{(x_1, y_1)}{(x_2, y_2)}_{X\times Y} := \scal{x_1}{x_2}_X +\scal{y_1}{y_2}_Y.
 \end{equation}
We denote the set of nonnegative integers is by $\NN$ and the set of real numbers by $\RR$. The set of nonnegative real numbers is denoted $\RP := \menge{x \in \RR}{x \geq 0}$ and set of the positive real numbers $\RPP := \menge{x \in \RR}{x >0}$. The sets of nonpositive and negative real numbers, denoted $\RM$ and $\RMM$ respectively, are defined analogously. Given a subset $C$ of $X$, its closure and interior are denoted respectively by $\overline C$ and $\inte C$. For a point $x \in X$ and scalar $\rho \in \RPP$, the closed ball centered at $x$ with radius $\rho$ is denoted $\ball{x}{\rho} :=\menge{y \in X}{\|x -y\| \leq \rho}$.

\subsection{The Douglas--Rachford algorithm}
In this section, we recall the background material for the Douglas--Rachford algorithm. 
Let $C$ be a nonempty subset of $X$. The \emph{projector} onto $C$ is the mapping
\begin{equation}
P_C\colon X \To C\colon x \mapsto \argmin_{c \in C} \|x -c\| =\menge{c \in C}{\|x -c\| =d_C(x)},
\end{equation}
where $d_C(x) :=\inf_{c \in C} \|x -c\|$ is the \emph{distance} from $x$ to $C$. 
Each $y \in P_C(x)$ is a \emph{nearest point} of $x$ in $C$, and called a \emph{projection} of $x$ onto $C$.
Since we consider only finite-dimensional spaces $X$, closedness of the set $C$ is necessary and sufficient for $C$ being \emph{proximinal}, 
\emph{i.e.,} $(\forall x \in X)$ $P_Cx \neq\varnothing$; see \cite[Corollary~3.13]{BC11}. 
In an abuse of notation, we write $P_Cx =c$ whenever $P_Cx =\{c\}$.

Let $C$ and $D$ be closed subsets of $X$ such that $C\cap D \neq\varnothing$. 
The \emph{feasibility problem} is
\begin{equation}
\label{e:prob}
\text{find a point in $C\cap D$.}
\end{equation}
A classical splitting method for solving \eqref{e:prob} is the so-called \emph{Douglas--Rachford algorithm} 
which is concisely described as the fixed point iteration corresponding to the \emph{Douglas--Rachford (DR) operator} defined by
\begin{empheq}[box=\mybluebox]{equation}
T_{C,D} :=\tfrac{1}{2}(\Id +R_DR_C),
\end{empheq}
where $\Id$ is the identity operator, 
and $R_C :=2P_C -\Id$ and $R_D :=2P_D -\Id$ are the \emph{reflectors} across $C$ and $D$, respectively. A sequence $(x_n)_\nnn$ is called a \emph{DR sequence (with respect to $(C, D)$)}, with starting point $x_0 \in X$, if
\begin{empheq}[box=\mybluebox]{equation}
\label{e:DRAseq}
(\forall\nnn)\quad x_{n+1} \in T_{C,D}x_n,
\end{empheq}
where we note that
\begin{equation}
\label{e:Tx}
(\forall x\in X)\quad T_{C,D}x =\frac{1}{2}(\Id +R_DR_C)x =\menge{x -c +P_D(2c -x)}{c\in P_Cx}.
\end{equation}
In the literature, the DRA for feasibility problems is also known as \emph{averaged alternating reflections} \cite{BCL04} and \emph{reflect-reflect-average} method \cite{BS11}. For other connections, we refer the reader to \cite{BCL02}.

The following fact gives important properties of convex projectors.
\begin{fact}[Projectors and reflectors onto convex sets]
\label{f:proj}
Let $C$ be a nonempty closed convex subset of $X$. Then the following hold: 
\begin{enumerate}
\item 
\label{f:proj_P}
$P_C$ is everywhere single-valued and \emph{firmly nonexpansive}, that is, 
\begin{equation}
(\forall x \in X)(\forall y \in X)\; \|P_Cx -P_Cy\|^2 +\|(\Id -P_C)x -(\Id -P_C)y\|^2 \leq \|x -y\|^2.
\end{equation}
\item
\label{f:proj_R}
$R_C$ is everywhere single-valued and \emph{nonexpansive}, that is, 
\begin{equation}
(\forall x \in X)(\forall y \in X)\; \|R_Cx -R_Cy\| \leq \|x -y\|.
\end{equation}
\end{enumerate}
In particular, $P_C$ and $R_C$ are continuous on $X$.
\end{fact}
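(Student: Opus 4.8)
The plan is to recognize this as a classical result (see, e.g., \cite[Chapter~3]{BC11}) and assemble its proof from three ingredients: existence and uniqueness of nearest points, the variational characterization of the projection, and a short computation. First, for \ref{f:proj_P}, I would establish that $P_C$ is single-valued. Existence of a nearest point has already been recorded above (closed subsets of a Euclidean space are proximinal). For uniqueness, suppose $p_1, p_2 \in P_Cx$; by convexity $\tfrac12(p_1+p_2) \in C$, and the parallelogram law gives $\|x -\tfrac12(p_1+p_2)\|^2 =\tfrac12\|x -p_1\|^2 +\tfrac12\|x -p_2\|^2 -\tfrac14\|p_1 -p_2\|^2 =d_C(x)^2 -\tfrac14\|p_1 -p_2\|^2$, which forces $p_1 =p_2$.

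Next I would prove the variational inequality: for $p \in C$, one has $p =P_Cx$ if and only if $\scal{x -p}{c -p} \leq 0$ for all $c \in C$. For the forward implication, fix $c \in C$; since $p +t(c -p) \in C$ for $t \in [0,1]$, minimality gives $\|x -p -t(c -p)\|^2 \geq \|x -p\|^2$, and expanding, dividing by $t >0$, and letting $t \downarrow 0$ yields $\scal{x -p}{c -p} \leq 0$. The converse follows from $\|x -c\|^2 =\|x -p\|^2 +2\scal{x -p}{p -c} +\|p -c\|^2 \geq \|x -p\|^2$. To obtain firm nonexpansiveness, write $p :=P_Cx$ and $q :=P_Cy$; applying the variational inequality at $x$ with $c =q$ and at $y$ with $c =p$, then adding, gives $\scal{(x -y) -(p -q)}{q -p} \leq 0$, hence $\|p -q\|^2 \leq \scal{x -y}{p -q}$. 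Setting $a :=p -q$ and $b :=(x -y) -(p -q)$, so that $a +b =x -y$, this inequality reads $\scal{a}{b} \geq 0$, and therefore $\|P_Cx -P_Cy\|^2 +\|(\Id -P_C)x -(\Id -P_C)y\|^2 =\|a\|^2 +\|b\|^2 \leq \|a\|^2 +2\scal{a}{b} +\|b\|^2 =\|x -y\|^2$.

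For \ref{f:proj_R}, single-valuedness of $R_C =2P_C -\Id$ is immediate from that of $P_C$. Expanding $\|R_Cx -R_Cy\|^2 =4\|P_Cx -P_Cy\|^2 -4\scal{P_Cx -P_Cy}{x -y} +\|x -y\|^2$ and using the inequality $\scal{P_Cx -P_Cy}{x -y} \geq \|P_Cx -P_Cy\|^2$ established above yields $\|R_Cx -R_Cy\|^2 \leq \|x -y\|^2$. Finally, $R_C$ is Lipschitz by nonexpansiveness, and $P_C =\tfrac12(\Id +R_C)$ is then Lipschitz as well, so both are continuous.

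The main obstacle here is a minor one: proving the variational characterization cleanly (the passage to the limit $t \downarrow 0$), after which everything reduces to bookkeeping with inner products. Alternatively, one could shorten the argument by invoking the standard equivalence that $P$ is firmly nonexpansive if and only if $2P -\Id$ is nonexpansive, deriving \ref{f:proj_R} and \ref{f:proj_P} from one another.
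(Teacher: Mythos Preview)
Your proof is correct and self-contained; each step (uniqueness via the parallelogram law, the variational characterization, the derivation of firm nonexpansiveness, and the expansion for $R_C$) is valid as written. The paper, by contrast, does not give a proof at all: it simply cites \cite[Theorem~3.14 \& Proposition~4.8]{BC11} for \ref{f:proj_P} and then invokes \cite[Corollary~4.10]{BC11} together with \ref{f:proj_P} for \ref{f:proj_R}. What you have done is essentially reproduce the content behind those citations, so the approaches are not really different in substance---yours is just the unpacked version of the same classical argument.
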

\begin{proof}
\ref{f:proj_P}: See \cite[Theorem~3.14~\&~Proposition~4.8]{BC11}.
\ref{f:proj_R}: This follows from \ref{f:proj_P} and \cite[Corollary~4.10]{BC11}.
\end{proof}

In the case that one of the constraints is convex, we make the following observations. In what follows, recall that a sequence $(x_n)_\nnn$ is \emph{asymptotically regular} if $x_n -x_{n+1} \to 0$ as $n \to +\infty$.
\begin{lemma}[Properties of the DRA]
\label{l:1cvx}
Let $C$ be a closed convex subset and $D$ be a closed subset of $X$ such that $C\cap D \neq\varnothing$,
and let $(x_n)_\nnn$ be a DR sequence with respect to $(C, D)$.
Then the following hold:
\begin{enumerate}
\item 
\label{l:1cvx_single}
$T_{C,D} =\frac{1}{2}(\Id +R_DR_C) =\Id -P_C +P_DR_C.$
\item
\label{l:1cvx_stable}
$T_{C,D}(\ball{\bar{x}}{\delta}) \subseteq \ball{\bar{x}}{2\delta}$ whenever $\bar{x} \in C\cap D$ and $\delta \in \RPP$.
\item
\label{l:1cvx_cluster}
If $(x_n)_\nnn$ is asymptotically regular and possess a cluster point $x$, then $P_Cx \in C\cap D$.   
\end{enumerate}
\end{lemma}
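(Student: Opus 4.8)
For part~\ref{l:1cvx_single}, the plan is to expand the definition of $T_{C,D}$ directly. Since $C$ is convex and closed, Fact~\ref{f:proj}\ref{f:proj_P} gives that $P_C$ is single-valued, so $R_C = 2P_C - \Id$ is single-valued; hence $T_{C,D} = \tfrac12(\Id + R_D R_C)$. Writing $R_D = 2P_D - \Id$ and substituting, $T_{C,D} = \tfrac12\bigl(\Id + (2P_D - \Id)R_C\bigr) = \tfrac12\bigl(\Id - R_C + 2P_D R_C\bigr)$. Now use $R_C = 2P_C - \Id$ to get $\Id - R_C = \Id - 2P_C + \Id = 2(\Id - P_C)$, so $\tfrac12(\Id - R_C) = \Id - P_C$, which yields $T_{C,D} = \Id - P_C + P_D R_C$. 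This is a one-line computation and presents no obstacle.

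For part~\ref{l:1cvx_stable}, fix $\bar x \in C\cap D$ and $x\in\ball{\bar x}{\delta}$; I must bound $\|T_{C,D}x - \bar x\|$. The key observation is that $\bar x$ is a fixed point of $T_{C,D}$: since $\bar x \in C$, $R_C\bar x = \bar x$, and since $\bar x \in D$, $R_D\bar x = \bar x$, so $T_{C,D}\bar x = \tfrac12(\bar x + R_D R_C \bar x) = \bar x$. Then write, using part~\ref{l:1cvx_single}, $T_{C,D}x - \bar x = (x - \bar x) - (P_C x - P_C \bar x) + (P_D R_C x - P_D R_C \bar x)$. By Fact~\ref{f:proj}\ref{f:proj_P} (firm nonexpansiveness of $P_C$, which in particular implies nonexpansiveness), $\|(x-\bar x) - (P_C x - P_C\bar x)\| = \|(\Id - P_C)x - (\Id - P_C)\bar x\| \le \|x - \bar x\| \le \delta$; and by Fact~\ref{f:proj}\ref{f:proj_R} and Fact~\ref{f:proj}\ref{f:proj_P}, $\|P_D R_C x - P_D R_C \bar x\| \le \|R_C x - R_C \bar x\| \le \|x - \bar x\| \le \delta$. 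The triangle inequality then gives $\|T_{C,D}x - \bar x\| \le 2\delta$, i.e.\ $T_{C,D}x \in \ball{\bar x}{2\delta}$. (Note $P_D$ is nonexpansive on the relevant points only as a single-valued selection; since we only need an inequality for norms of differences, working with any measurable selection and the fact that a projection onto a closed set in a Euclidean space is firmly nonexpansive \emph{relative to} the convex set $D$ — here $D$ need not be convex, so more carefully one should invoke that $P_D$, while possibly set-valued, satisfies $\|u - v\| \le \|R_C x - R_C\bar x\|$ for $u\in P_D R_C x$ and the particular $v = \bar x = P_D R_C\bar x$, which holds because $\bar x$ is the unique nearest point to itself.) The mild subtlety here is handling the possible set-valuedness of $P_D$; this is resolved by choosing the selection realizing $x_{n+1}$ and using that $\bar x$ is its own projection.

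For part~\ref{l:1cvx_cluster}, suppose $(x_n)$ is asymptotically regular with a cluster point $x$, say $x_{n_k}\to x$. I want to show $P_C x \in C\cap D$. First, $P_C$ is continuous (Fact~\ref{f:proj}), so $P_C x_{n_k} \to P_C x \in C$; it remains to show $P_C x \in D$. By part~\ref{l:1cvx_single}, $x_{n+1} = x_n - P_C x_n + y_n$ where $y_n \in P_D R_C x_n$, so $y_n = x_{n+1} - x_n + P_C x_n$. Asymptotic regularity gives $x_{n+1} - x_n \to 0$, hence along the subsequence $y_{n_k} = x_{n_k+1} - x_{n_k} + P_C x_{n_k} \to P_C x$. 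Since $y_{n_k} \in P_D R_C x_{n_k} \subseteq D$ and $D$ is closed, the limit $P_C x$ lies in $D$. Therefore $P_C x \in C\cap D$. The main obstacle in this part is confirming that $P_D$ is outer semicontinuous (or at least that limits of points in $D$ stay in $D$) — but the latter is immediate from closedness of $D$, so in fact no real obstacle remains once part~\ref{l:1cvx_single} is in hand; the work is entirely in bookkeeping the selection $y_n$ and invoking continuity of $P_C$.
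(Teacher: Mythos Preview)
Your arguments for parts~\ref{l:1cvx_single} and~\ref{l:1cvx_cluster} are correct and match the paper's proof essentially line-for-line.

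Part~\ref{l:1cvx_stable} has a genuine gap. You claim, for $u\in P_D R_Cx$ and $\bar x\in D$, the bound $\|u-\bar x\|\le\|R_Cx-\bar x\|$, and justify it by saying ``$\bar x$ is its own unique nearest point.'' That justification does not deliver the inequality: for nonconvex $D$ the projector $P_D$ is \emph{not} nonexpansive, even relative to a point of $D$. Concretely, take $X=\RR$, $D=\{-1,1\}$, $\bar x=1$, and $y:=R_Cx=-0.1$; then $u=P_Dy=-1$ and $\|u-\bar x\|=2>1.1=\|y-\bar x\|$. So the step ``$\|P_DR_Cx-P_DR_C\bar x\|\le\|R_Cx-R_C\bar x\|$'' fails in general, and your triangle-inequality argument stalls at the weaker estimate $\|u-\bar x\|\le\|u-R_Cx\|+\|R_Cx-\bar x\|\le2\|R_Cx-\bar x\|$, giving only $3\delta$ rather than the required $2\delta$.

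The paper avoids this by not invoking any nonexpansiveness of $P_D$. Instead it writes $x_+=\tfrac12\bigl(x+(2p-R_Cx)\bigr)$ with $p\in P_DR_Cx$ and uses the \emph{distance} bound $\|p-R_Cx\|=d_D(R_Cx)\le\|R_Cx-\bar x\|$ (valid because $\bar x\in D$), together with nonexpansiveness of $R_C$:
\begin{align*}
2\|x_+-\bar x\|
&\le\|x-\bar x\|+2\|p-R_Cx\|+\|R_Cx-\bar x\|\\
&\le\|x-\bar x\|+3\|R_Cx-R_C\bar x\|\le4\|x-\bar x\|.
\end{align*}
This decomposition keeps the factor~$2$ you need. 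Your splitting via $(\Id-P_C)$ and $P_DR_C$ is natural when both sets are convex, but for nonconvex $D$ you must route the estimate through $d_D$ rather than through ``$P_D$ nonexpansive.''
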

\begin{proof}
\ref{l:1cvx_single}: Combine \eqref{e:Tx} with the single-valuedness of $P_C$ (\cref{f:proj}).

\ref{l:1cvx_stable}: Let $\bar{x} \in C\cap D$, let $\delta \in \RPP$, let $x \in \ball{\bar{x}}{\delta}$, and let $x_+ \in T_{C,D}x$.
Then there exists $p \in P_DR_Cx$ such that $x_+ =\frac{1}{2}(x +(2p -R_Cx))$.
Since $\bar{x} \in C\cap D$, it follows that $R_C\bar{x} =\bar{x}$ and thus
\begin{subequations}
\begin{align}
2\|x_+ -\bar{x}\| &=\|(x -\bar{x}) +2(p -R_Cx) +(R_Cx -\bar{x})\| \\
&\leq \|x -\bar{x}\| +2\|p -R_Cx\| +\|R_Cx -\bar{x}\| \\
&=\|x -\bar{x}\| +2d_D(R_Cx) +\|R_Cx -\bar{x}\| \\
&\leq \|x -\bar{x}\|+ 2\|R_Cx-\bar{x}\|+\|R_Cx -\bar{x}\| \\
&=\|x -\bar{x}\| +3\|R_Cx -R_C\bar{x}\| \leq 4\|x-\bar{x}\|, 
\end{align}
\end{subequations}
where the last estimate follows from the nonexpansiveness of $R_C$ (\cref{f:proj}).
Altogether, we obtain that $\|x_+ -\bar{x}\| \leq 2\|x -\bar{x}\| \leq 2\delta$, hence $x_+ \in \ball{\bar{x}}{2\delta}$ and the result follows.

\ref{l:1cvx_cluster}: Using \ref{l:1cvx_single} yields
\begin{equation}
\label{e:single'}
(\forall\nnn)\quad p_n :=x_{n+1} -x_n +P_Cx_n \in P_DR_Cx_n \subseteq D.
\end{equation}
Let $x$ be a cluster point of $(x_n)_\nnn$. 
Then there exists a subsequence $(x_{k_n})_\nnn$ of $(x_n)_\nnn$ such that $x_{k_n} \to x$.
By \cref{f:proj}, $P_C$ is continuous and so $P_Cx_{k_n} \to P_Cx$.
Combining with \eqref{e:single'} and the asymptotic regularity of $(x_n)_\nnn$, this gives $p_{k_n} \to P_Cx$. 
Noting that $(\forall\nnn)$ $p_{k_n} \in D$ and that $D$ is closed, we deduce that $P_Cx \in D$ and therefore $P_Cx \in C\cap D$.
\end{proof}

\subsection{Convexity}
Given an extended-real-valued function $f\colon X \to \left[-\infty, +\infty\right]$, 
its \emph{effective domain} is denoted by $\dom f :=\menge{x\in X}{f(x)<+\infty}$, 
its \emph{graph} by $\gra f :=\menge{(x, \rho)\in X\times \RR}{f(x) =\rho}$, 
its \emph{epigraph} by $\epi f :=\menge{(x, \rho)\in X\times \RR}{f(x) \leq \rho}$,
and its \emph{lower level set} at height $\xi \in \RR$ by $\lev{\xi}f :=\menge{x \in X}{f(x) \leq \xi}$.
The function $f$ is said to be \emph{proper} if $\dom f\neq\varnothing$ and it never takes the value $-\infty$, 
\emph{lower semicontinuous (lsc)} if $f(x) \leq \liminf_{y \to x} f(y)$ for every $x \in \dom f$,
and \emph{convex} if
\begin{multline}
\label{e:cvx}
\hspace{1.5cm}(\forall x \in \dom f)(\forall y \in \dom f)(\forall \lambda \in \left]0, 1\right[)\\
f((1 -\lambda)x +\lambda y) \leq (1 -\lambda)f(x) +\lambda f(y).\hspace{1.5cm}
\end{multline}
Let $f\colon X \to \left[-\infty, +\infty\right]$ be proper. 
Then $f$ is said to be \emph{strictly convex} if, in addition to being convex, the inequality in \eqref{e:cvx} is strict whenever $x\neq y$.
We say that $f$ is \emph{convex on $C$} (respectively \emph{strictly convex on $C$}) 
if the corresponding inequality holds whenever $x \in C$ and $y \in C$.  
ity.

\begin{fact}
\label{f:cvx}
Let $f\colon X \to \left[-\infty, +\infty\right]$ be a proper function and let $C$ be a nonempty open convex subset of $\dom f$.
\begin{enumerate}
\item 
\label{f:cvx_d1}
Suppose that $f$ is G\^ateaux differentiable on $C$. Then the following hold:
	\begin{enumerate}
	\item 
	$f$ is convex on $C$ if and only if $\nabla f$ is \emph{monotone} on $C$ in the sense that
	\begin{equation}
	(\forall x \in C)(\forall y \in C)\quad \scal{x -y}{\nabla f(x) -\nabla f(y)} \geq 0.
	\end{equation}
	\item
	$f$ is strictly convex on $C$ if and only if $\nabla f$ is \emph{strictly monotone} on $C$ in the sense that
	\begin{equation}
	(\forall x \in C)(\forall y \in C)\quad x\neq y \;\implies\; \scal{x -y}{\nabla f(x) -\nabla f(y)} >0.
	\end{equation}
	\end{enumerate}
\item
\label{f:cvx_d2}
Suppose that $f$ is twice G\^ateaux differentiable on $C$. Then the following hold:
	\begin{enumerate}
	\item 
	$f$ is convex on $C$ if and only if $\nabla^2 f(x)$ is \emph{positive semidefinite} for every $x \in C$.
	\item
	$f$ is strictly convex on $C$ if $\nabla^2 f(x)$ is \emph{positive definite} for every $x \in C$.
	\end{enumerate}
\end{enumerate} 
\end{fact}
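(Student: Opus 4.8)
The plan is to reduce all four statements to the classical one-variable characterizations of convexity by restricting $f$ to line segments. Fix $x, y \in C$; since $C$ is open and convex, there is an open interval $I \supseteq [0,1]$ with $x + t(y-x) \in C$ for all $t \in I$, and we may define $\varphi_{x,y}\colon I \to \RR$ by $\varphi_{x,y}(t) := f(x + t(y-x))$. A direct manipulation of the definition \eqref{e:cvx} shows that $f$ is convex on $C$ exactly when every $\varphi_{x,y}$ is convex on $I$, and, restricting attention to pairs with $x \neq y$, that $f$ is strictly convex on $C$ exactly when every such $\varphi_{x,y}$ is strictly convex on $I$. Hence it suffices to characterize convexity and strict convexity of the functions $\varphi_{x,y}$.

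Next I would establish the chain rule along segments directly from the definition of the G\^ateaux derivative: for $t \in I$, the difference quotient $\tfrac{1}{s}\big(f(x+(t+s)(y-x)) - f(x+t(y-x))\big)$ tends to $\scal{y-x}{\nabla f(x+t(y-x))}$ as $s \to 0$, so $\varphi_{x,y}$ is differentiable on $I$ with $\varphi_{x,y}'(t) = \scal{y-x}{\nabla f(x+t(y-x))}$; applying the same computation to $\nabla f$ in place of $f$ yields, under the hypothesis of part (ii), $\varphi_{x,y}''(t) = \scal{y-x}{\nabla^2 f(x+t(y-x))(y-x)}$.

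For part (i), I would invoke the standard facts that a differentiable real function on an interval is convex if and only if its derivative is nondecreasing, and strictly convex if and only if its derivative is strictly increasing. Via the formula for $\varphi_{x,y}'$, convexity of all the $\varphi_{x,y}$ becomes the statement that $t \mapsto \scal{y-x}{\nabla f(x+t(y-x))}$ is nondecreasing for every $x, y \in C$. Evaluating at $t=0$ and $t=1$ gives monotonicity of $\nabla f$ on $C$; conversely, given $t < s$ in $I$, writing $u := x+t(y-x)$ and $v := x+s(y-x)$ (both in $C$) and using $y - x = \tfrac{1}{s-t}(v-u)$ converts the monotonicity inequality for $u, v$ into $\varphi_{x,y}'(s) \geq \varphi_{x,y}'(t)$. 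The same bookkeeping with strict inequalities handles the strictly convex case, the point being that for $x \neq y$ the auxiliary points $u$ and $v$ are distinct whenever $t \neq s$. For part (ii)(a), I would instead use that a twice differentiable real function on an interval is convex if and only if its second derivative is nonnegative; via the formula for $\varphi_{x,y}''$ this reads $\scal{h}{\nabla^2 f(z) h} \geq 0$ for every $z \in C$ and every $h$ of the form $y - x$ with $y \in C$, and, since $C$ is open, this restriction on $h$ disappears, so $\nabla^2 f(z)$ is positive semidefinite for every $z \in C$; the reverse implication is immediate from the same formula. Finally, for part (ii)(b), if every $\nabla^2 f(z)$ is positive definite then $\varphi_{x,y}'' > 0$ on $I$ whenever $x \neq y$, so $\varphi_{x,y}$ is strictly convex, whence $f$ is strictly convex on $C$.

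None of these steps is deep; the main points requiring care are that we work with G\^ateaux rather than Fr\'echet derivatives, so the chain rule along segments must be derived from the definition of the directional derivative rather than quoted as a black box, and the precise handling of strict inequalities in the reverse implication of (i)(b). One should also keep in mind that the converse of (ii)(b) genuinely fails (for example $t \mapsto t^4$), which is why that item asserts only one implication.
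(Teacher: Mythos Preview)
Your argument is correct and self-contained: the reduction to the one-variable functions $\varphi_{x,y}$, the chain rule along segments derived directly from the G\^ateaux definition, and the handling of the strict cases are all sound. Note only that the paper does not actually prove this fact but merely cites \cite[Propositions~17.10~\&~17.13]{BC11}; your line-segment reduction is precisely the standard argument underlying those propositions, so in substance you have supplied what the reference would give.
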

\begin{proof}
This follows from \cite[Propositions~17.10~\&~17.13]{BC11}.
\end{proof}

\subsection{Subdifferentiability}
The \emph{limiting normal cone} to a subset $C$ of $X$ at a point $x \in X$ is defined by
\begin{equation}
N_C(x) :=\mmenge{x^*\in X}{\exists \varepsilon_n\downarrow 0,\; x_n\stackrel{C}{\to} x, 
\text{~and~} x_n^* \to x^* \text{~with~} \limsup_{y\stackrel{C}{\to} x_n} \frac{\scal{x_n^*}{y -x_n}}{\|y -x_n\|} \leq \varepsilon_n}
\end{equation}
if $x \in C$, and by $N_C(x) :=\varnothing$ otherwise. 
Here the notation $y \stackrel{C}{\to} x$ means $y \to x$ with $y \in C$.

Let $f\colon X \to \left[-\infty, +\infty\right]$, let $x \in X$ with $|f(x)| <+\infty$, and let $\varepsilon \in \RP$. 
The \emph{limiting subdifferential} of $f$ at $x$ is given by
\begin{equation}
\partial f(x) :=\menge{x^*\in X}{(x^*, -1)\in N_{\epi f}(x, f(x))}
\end{equation}
and the \emph{analytic $\varepsilon$-subdifferential} of $f$ at $x$ is given by
\begin{equation}
\widehat{\partial}_{\varepsilon} f(x) :=\mmenge{x^* \in X}{\liminf_{y \to x}\frac{f(y) -f(x) -\scal{x^*}{y -x}}{\|y -x\|} \geq -\varepsilon}.
\end{equation}
Both subdifferentials of $f$ at a point $x$ are defined to be empty when $|f(x)| =+\infty$.
The limiting subdifferential can be represented in analytic form \cite[Theorem~1.89]{Mor06}
\begin{equation}
\label{e:lim_eps_subdiff}
\partial f(x) =\Limsup_{\substack{y \stackrel{f}{\to} x,\, \varepsilon\,\downarrow\, 0}} \widehat{\partial}_{\varepsilon} f(y) 
=\menge{x^* \in X}{\exists\varepsilon_n \to 0,\, x_n \stackrel{f}{\to} x,\, x_n^*\to x^* \text{~with~} x_n^*\in \widehat{\partial}_{\varepsilon_n}f(x_n)},
\end{equation}
where the notation $y \stackrel{f}{\to} x$ means $y \to x$ with $f(y) \to f(x)$ and 
\begin{equation}
\Limsup_{y \to x} F(y) :=\menge{x^* \in X}{\exists x_n \to x,\, x_n^* \to x^* \text{~with~} x_n^*\in F(x_n)}
\end{equation}
denotes the \emph{sequential Painlev\'e--Kuratowski upper limit} of $F$ at $x$.

We now recall some important properties of the limiting subdifferential.
\begin{fact}[Fermat's rule]
\label{f:Fermat}
Suppose that a function $f\colon X \to \left[-\infty, +\infty\right]$ attains a local minimum at a point $x$ with $|f(x)| <+\infty$. Then $0 \in \partial f(x)$.
\end{fact}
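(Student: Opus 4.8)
The plan is to reduce the statement to the defining geometric property of the limiting normal cone, via the analytic $\varepsilon$-subdifferential and the analytic representation \eqref{e:lim_eps_subdiff} of $\partial f$. Concretely, I would show that $0 \in \widehat{\partial}_0 f(x)$ and then invoke \eqref{e:lim_eps_subdiff} with the constant sequences $x_n = x$ and $\varepsilon_n = 0$ to conclude $0 \in \partial f(x)$.

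First I would observe that, since $f$ attains a local minimum at $x$, there is a neighborhood $U$ of $x$ on which $f(y) \geq f(x)$ for all $y \in U$. In particular $f(y) - f(x) \geq 0$ for $y$ near $x$, so for the choice $x^* = 0$ the difference quotient in the definition of $\widehat{\partial}_\varepsilon f$ satisfies
\begin{equation}
\frac{f(y) - f(x) - \scal{0}{y - x}}{\|y - x\|} = \frac{f(y) - f(x)}{\|y - x\|} \geq 0
\end{equation}
for all $y \neq x$ in $U$. Taking the $\liminf$ as $y \to x$ therefore gives a value $\geq 0 \geq -\varepsilon$ for every $\varepsilon \in \RP$; in particular $0 \in \widehat{\partial}_0 f(x)$. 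Note this step also uses the hypothesis $|f(x)| < +\infty$, which ensures $\widehat{\partial}_0 f(x)$ is not declared empty by fiat and that the quotient is well-defined.

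Next I would feed this into the Painlev\'e--Kuratowski representation \eqref{e:lim_eps_subdiff}. Taking $\varepsilon_n := 0 \to 0$, $x_n := x$ (so trivially $x_n \stackrel{f}{\to} x$ since $f(x_n) = f(x) \to f(x)$), and $x_n^* := 0 \to 0$ with $x_n^* \in \widehat{\partial}_{\varepsilon_n} f(x_n) = \widehat{\partial}_0 f(x)$ by the previous step, the defining condition on the right-hand side of \eqref{e:lim_eps_subdiff} is met with limit $0$. Hence $0 \in \partial f(x)$.

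The argument is essentially a direct unwinding of definitions, so I do not expect a genuine obstacle; the only point requiring a little care is making sure the local-minimum hypothesis is used at the right place — namely to guarantee the sign of the difference quotient on a full neighborhood of $x$, not merely along some sequence — and confirming that the finiteness assumption $|f(x)| < +\infty$ is what legitimizes working with $\widehat{\partial}_\varepsilon f(x)$ rather than the empty set. An alternative, equally short route would be purely geometric: a local minimizer $x$ has the property that $\epi f$ lies, near $(x, f(x))$, on one side of the horizontal hyperplane $\{(y, f(x)) : y \in X\}$, which forces $(0, -1) \in N_{\epi f}(x, f(x))$ directly from the definition of the limiting normal cone; this yields $0 \in \partial f(x)$ by the definition of $\partial f$. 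I would present the analytic version as the main line since it dovetails with \eqref{e:lim_eps_subdiff}, which has just been recalled.
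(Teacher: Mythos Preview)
Your argument is correct: showing $0 \in \widehat{\partial}_0 f(x)$ from the local-minimum inequality and then passing to $\partial f(x)$ via \eqref{e:lim_eps_subdiff} with constant sequences is a clean, self-contained derivation, and the alternative geometric route through $N_{\epi f}$ you sketch would work equally well.

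The paper, however, does not give a proof at all; it simply cites \cite[Proposition~1.114]{Mor06}. So your approach is not so much \emph{different} from the paper's as it is \emph{more explicit}: you supply the elementary details that the paper delegates to a reference. The benefit of your write-up is that it is self-contained and dovetails with the analytic representation \eqref{e:lim_eps_subdiff} already recalled in the text; the benefit of the paper's approach is brevity, since the result is entirely standard.
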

\begin{proof}
This follows from \cite[Proposition~1.114]{Mor06}.
\end{proof}
\begin{fact}[Sum and product rules]
\label{f:calculus}
Consider two functions, $f\colon X \to \left[-\infty, +\infty\right]$ and $g\colon X \to \left[-\infty +\infty\right]$, and let $x \in X$. 
The following assertions hold.
\begin{enumerate}
\item 
\label{f:calculus_sum}
If $f$ is finite at $x$ and $g$ is strictly differentiable at $x$, then 
\begin{equation}
\partial(f +g)(x) =\partial f(x) +\nabla g(x).
\end{equation}
\item
\label{f:calculus_product}
If $f$ and $g$ are Lipschitz continuous around $x$, then 
\begin{equation}
\partial(f\cdot g)(x) =\partial(g(x)f +f(x)g)(x) \subseteq \partial(g(x)f)(x) +\partial(f(x)g)(x).
\end{equation}
\end{enumerate}
\end{fact}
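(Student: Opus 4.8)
The plan is to deduce both assertions from the analytic representation \eqref{e:lim_eps_subdiff} of the limiting subdifferential, thereby reducing matters to elementary estimates for the analytic $\varepsilon$-subdifferentials $\widehat{\partial}_{\varepsilon}$. Both statements are, of course, also recorded in \cite{Mor06} --- an exact sum rule when one summand is strictly differentiable, and a product rule for locally Lipschitz functions --- but the self-contained route is short, and I would carry it out as follows.

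For \ref{f:calculus_sum}, the heart of the matter is a \emph{perturbed $\varepsilon$-sum rule}. Fix $\delta > 0$. Strict differentiability of $g$ at $x$, with $\nabla g(x) =: v$, supplies $\rho > 0$ with $|g(y) - g(y') - \scal{v}{y - y'}| \leq \delta\|y - y'\|$ for all $y, y' \in \ball{x}{\rho}$; splitting the difference quotient of $f = (f + g) - g$ then shows that, whenever $z \in \ball{x}{\rho/2}$ has $f(z)$ finite and $\varepsilon \geq 0$, the implication $z^* \in \widehat{\partial}_{\varepsilon}(f + g)(z) \implies z^* - v \in \widehat{\partial}_{\varepsilon + \delta}f(z)$ holds (and symmetrically with the roles of $f$ and $f+g$ swapped). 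Now take $x^* \in \partial(f + g)(x)$. By \eqref{e:lim_eps_subdiff} there are $\varepsilon_n \to 0$ and $x_n \to x$ with $(f + g)(x_n) \to (f + g)(x)$ and $x_n^* \to x^*$ such that $x_n^* \in \widehat{\partial}_{\varepsilon_n}(f + g)(x_n)$. By a routine diagonal argument we may arrange $\delta_n \downarrow 0$ with associated radii $\rho_n$ so that $x_n \in \ball{x}{\rho_n/2}$; the perturbed rule then yields $x_n^* - v \in \widehat{\partial}_{\varepsilon_n + \delta_n}f(x_n)$. Since $g$ is continuous at $x$, convergence $(f + g)(x_n) \to (f + g)(x)$ forces $f(x_n) \to f(x)$, and $x_n^* - v \to x^* - v$; hence \eqref{e:lim_eps_subdiff} gives $x^* - v \in \partial f(x)$, i.e.\ $\partial(f + g)(x) \subseteq \partial f(x) + \nabla g(x)$. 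The opposite inclusion follows by applying the inclusion just established to $f = (f + g) + (-g)$, noting that $-g$ is strictly differentiable at $x$ and $f + g$ is finite at $x$.

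For \ref{f:calculus_product}, I would use the algebraic identity $f \cdot g = \big(g(x) f + f(x) g\big) + \big(r - f(x) g(x)\big)$, where $r(y) := (f(y) - f(x))(g(y) - g(x))$, valid on any neighbourhood of $x$ on which $f$ and $g$ are finite. If $f$ and $g$ are Lipschitz around $x$ with constants $L_f$ and $L_g$, then $|r(y) - r(y')| \leq L_f L_g\|y - y'\|\big(\|y - x\| + \|y' - x\|\big)$ for $y, y'$ near $x$, so $r$ is Lipschitz around $x$ and strictly differentiable at $x$ with $\nabla r(x) = 0$; hence $r - f(x) g(x)$ is strictly differentiable at $x$ with zero derivative, while $g(x) f + f(x) g$ is finite at $x$. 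Applying \ref{f:calculus_sum} gives $\partial(f \cdot g)(x) = \partial\big(g(x) f + f(x) g\big)(x)$, the asserted equality. The concluding inclusion $\partial\big(g(x) f + f(x) g\big)(x) \subseteq \partial\big(g(x) f\big)(x) + \partial\big(f(x) g\big)(x)$ is then the exact (limiting) sum rule, which applies because both summands are Lipschitz around $x$ \cite{Mor06}.

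The step I expect to require the most care is the bookkeeping in \ref{f:calculus_sum}: one must verify that \emph{strict} differentiability at $x$ --- which, unlike continuous differentiability, says nothing about $g$ at neighbouring points --- nevertheless suffices to carry the linear term $v$ through the Painlev\'e--Kuratowski upper limit. This works precisely because strict differentiability delivers the linearization bound \emph{uniformly} on a fixed ball about $x$, which is what permits $\delta_n$ to be driven to $0$ in tandem with $\varepsilon_n$; under mere Fr\'echet differentiability at $x$ the corresponding exact sum rule for the limiting subdifferential is false (e.g.\ $f \equiv 0$ and $g(t) = t^2\sin(1/t)$ on $\RR$).
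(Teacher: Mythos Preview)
Your argument is correct, and it takes a genuinely different route from the paper's. The paper does not prove this fact at all: its proof consists of bare citations --- \cite[Proposition~1.107]{Mor06} for \ref{f:calculus_sum} and \cite[Propositions~1.111 and~3.45]{Mor06} for \ref{f:calculus_product} --- with no further elaboration. By contrast, you give an essentially self-contained argument built directly on the analytic representation \eqref{e:lim_eps_subdiff}: a perturbed $\varepsilon$-sum rule for \ref{f:calculus_sum}, and for \ref{f:calculus_product} the algebraic decomposition $fg = (g(x)f + f(x)g) + (r - f(x)g(x))$ with $r(y) = (f(y)-f(x))(g(y)-g(x))$ shown to be strictly differentiable at $x$ with vanishing derivative, so that the product rule reduces to \ref{f:calculus_sum}. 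What the paper's approach buys is brevity and an authoritative pointer; what yours buys is insight --- in particular, your closing remark isolates exactly why \emph{strict} (rather than Fr\'echet) differentiability is the correct hypothesis, namely the uniform linearization on a ball about $x$ that survives the Painlev\'e--Kuratowski upper limit. Note that your proof of \ref{f:calculus_product} is not fully self-contained either, since the final inclusion still invokes the Lipschitz sum rule from \cite{Mor06}; this is consistent with the paper's own reliance on that reference.
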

\begin{proof}
\ref{f:calculus_sum}: \cite[Proposition~1.107]{Mor06}.
\ref{f:calculus_product}: \cite[Propositions~1.111 and~3.45]{Mor06}.
\end{proof}
If $f$ is lsc around $x$, then a convenient, and often used, representation for the limiting subdifferential is given by \cite[Theorems~1.89~\&~2.34]{Mor06}
\begin{equation}
\label{e:lim_subdiff}
\partial f(x) =\Limsup_{\substack{y \stackrel{f}{\to} x}} \widehat{\partial} f(y) 
=\menge{x^*\in X}{\exists x_n \stackrel{f}{\to} x \text{~and~} x_n^* \to x^* 
\text{~with~} x_n^* \in \widehat{\partial} f(x_n)},
\end{equation} 
where $\widehat{\partial} f :=\widehat{\partial}_0 f$ is the so-called \emph{Fr\'echet subdifferential} of $f$. However, in what follows, it will be necessary to consider the subdifferentials of both $f$ and $-f$ simultaneously. In this case, \eqref{e:lim_subdiff}  cannot not be applied because $f$ and $-f$ are usually  not simultaneously lsc (\emph{e.g.,} if $f$ takes the value $+\infty$). Further, it is worth emphasizing, that $\partial f$ and $-\partial(-f)$ are, in general, considerably different. For instance, the function $f =|\cdot|\colon \RR \to \RR$ has
\begin{equation}
\partial f(0) =\left[-1, 1\right] \quad\text{and}\quad -\partial(-f)(0) =\{-1, 1\}.
\end{equation}
Combining these two subdifferentials yields the \emph{symmetric subdifferential} of $f$ which is defined by
\begin{equation}
\label{e:symdiff}
\partial^0\! f :=\partial f\cup \partial^+\! f,
\end{equation}
where $\partial^+\! f := -\partial(-f)$ is the so-called \emph{limiting upper subdifferential} of $f$.
In contrast to the limiting subdifferential, the symmetric subdifferential possess the classical ``plus-minus'' symmetry (\emph{i.e.,}~$\partial^0\!(-f) =-\partial^0\! f$).
Also note that, if $f$ is strictly differentiable at $x$, then \cite[Corollary~1.82]{Mor06}
\begin{equation}
\partial f(x) =\partial^+\! f(x) =\partial^0\! f(x) =\{\nabla f(x)\}.
\end{equation} 
If $f$ is convex, then the limiting subdifferential reduces to the \emph{convex subdifferential (or Fenchel subdifferential)} of convex analysis \cite[Theorem~1.93]{Mor06}, that is,
\begin{equation}
\label{e:Fensubdiff}
\partial f(x) =\menge{x^*\in X}{(\forall y\in X)\;\scal{x^*}{y -x} \leq f(y) -f(x)},
\end{equation}
and we have the inclusions
\begin{equation}
\label{e:cvx_inclus}
\partial^+\! f(x) \subseteq \partial f(x) =\partial^0\! f(x).
\end{equation}

The following property for the limiting subdifferential is mentioned without proof in \cite{Mor06,RW98} and we therefore we provide one for the convenience of the reader. Furthermore, note that lower semicontinuity is not assumed and so we cannot simply appeal to the representation \eqref{e:lim_subdiff}.
\begin{lemma}[Scalar multiplication rule]
\label{l:scalar}
Let $f\colon X \to \left[-\infty, +\infty\right]$. Then
\begin{equation}
\label{e:scalar_subdiff}
(\forall \lambda \in \RR\smallsetminus\{0\})\quad
\partial(\lambda f) =
\begin{cases}
\lambda\partial f &\text{~if~} \lambda > 0,\\ 
\lambda\partial^+\! f &\text{~if~} \lambda <0.
\end{cases}
\end{equation}
\end{lemma}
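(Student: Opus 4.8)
The plan is to split into the two cases $\lambda > 0$ and $\lambda < 0$, the second of which will be reduced to the first.

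For $\lambda > 0$, the first step is the elementary scaling identity for the analytic $\varepsilon$-subdifferential,
\begin{equation*}
(\forall x \in X)(\forall \varepsilon \in \RP)\quad \widehat{\partial}_{\varepsilon}(\lambda f)(x) = \lambda\,\widehat{\partial}_{\varepsilon/\lambda} f(x),
\end{equation*}
which I would verify directly from the definition: when $|f(x)| = +\infty$ both sides are empty by convention, and when $|f(x)| < +\infty$, substituting $x^\ast = \lambda z^\ast$ shows that the defining inequality $\liminf_{y\to x}\big((\lambda f)(y) - (\lambda f)(x) - \scal{x^\ast}{y-x}\big)/\|y-x\| \geq -\varepsilon$ is equivalent, after pulling out the positive factor $\lambda$, to $\liminf_{y\to x}\big(f(y) - f(x) - \scal{z^\ast}{y-x}\big)/\|y-x\| \geq -\varepsilon/\lambda$. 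The second step is to pass to the limit via the analytic representation \eqref{e:lim_eps_subdiff}. Since $\lambda \neq 0$, the convergence $y \stackrel{\lambda f}{\to} x$ is the same as $y \stackrel{f}{\to} x$; since $\varepsilon \downarrow 0$ if and only if $\varepsilon/\lambda \downarrow 0$; and since multiplication by the nonzero scalar $\lambda$ is a homeomorphism of $X$ and hence commutes with the sequential Painlev\'e--Kuratowski upper limit, \eqref{e:lim_eps_subdiff} yields $\partial(\lambda f)(x) = \lambda\,\partial f(x)$, as claimed.

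For $\lambda < 0$, I would reduce to the case just treated by observing that, as extended-real-valued functions, $\lambda f = |\lambda|\,(-f)$ with $|\lambda| = -\lambda > 0$. Applying the positive case to $|\lambda|$ and $-f$, and then invoking the definition $\partial^+\! f = -\partial(-f)$, gives
\begin{equation*}
\partial(\lambda f) = |\lambda|\,\partial(-f) = -\lambda\,\partial(-f) = \lambda\big(-\partial(-f)\big) = \lambda\,\partial^+\! f.
\end{equation*}

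No genuine obstacle is expected here. The only points requiring attention are the edge cases $|f(x)| = +\infty$, where every subdifferential involved is empty by convention so that all the displayed identities hold trivially, and the bookkeeping of the two simultaneous limiting parameters $\varepsilon$ and $y$ in \eqref{e:lim_eps_subdiff}: one must check that the substitutions $\varepsilon \mapsto \varepsilon/\lambda$ and $x_n^\ast \mapsto x_n^\ast/\lambda$ preserve all the convergences, which they do precisely because $\lambda \neq 0$.
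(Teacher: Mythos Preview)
Your proposal is correct and follows essentially the same route as the paper: establish the scaling identity for the analytic $\varepsilon$-subdifferential, pass to the limit via the representation \eqref{e:lim_eps_subdiff}, and reduce the case $\lambda<0$ to the positive case through $\lambda f=|\lambda|(-f)$ together with the definition of $\partial^+\!f$. Your treatment of the edge case $|f(x)|=+\infty$ and of the reparametrization $\varepsilon\mapsto\varepsilon/\lambda$ in the outer limit is exactly what is needed; incidentally, your scaling formula $\widehat{\partial}_{\varepsilon}(\lambda f)=\lambda\,\widehat{\partial}_{\varepsilon/\lambda}f$ is the correct one (the paper's displayed subscript $\lambda\varepsilon$ appears to be a typo, as its subsequent limit computation in fact uses $\varepsilon/\lambda$).
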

\begin{proof}	
Let $\lambda \in \RR\smallsetminus\{0\}$. Then $|f(x)|<+\infty$ if and only if $|\lambda f(x)|<+\infty$ for all $x\in X$. In particular, this shows that \eqref{e:scalar_subdiff} holds at points at which $f$ is not finite. Assume now that $x\in X$ with $|f(x)|<+\infty$. By the definition of the analytic $\varepsilon$-subdifferential of $f$,
\begin{equation}
\widehat{\partial}_{\varepsilon}(\lambda f)(x) 
     = \begin{cases}
         \lambda\widehat{\partial}_{\lambda\varepsilon}f(x)  & \text{if }\lambda>0, \\
         -\lambda\widehat{\partial}_{\varepsilon}(-f)(x)  & \text{if }\lambda<0. 
       \end{cases}
\end{equation}
  Hence, if $\lambda>0$, then as $|f(x)| <+\infty$, we may apply \eqref{e:lim_eps_subdiff} to deduce that
\begin{equation}
\partial(\lambda f)(x) 
         = \Limsup_{\substack{y \stackrel{f}{\to} x,\, \varepsilon\,\downarrow\, 0}} \widehat{\partial}_{\varepsilon}(\lambda f)(y) 
         = \lambda\Limsup_{\substack{y \stackrel{f}{\to} x,\, \varepsilon\,\downarrow\, 0}} \widehat{\partial}_{\varepsilon/\lambda}f(y)
         = \lambda\partial f(x).
\end{equation}
The argument for $\lambda<0$ is performed analogously.
\end{proof}

\begin{remark}[Multiplication by zero]
 Care must be exercised in the case that $\lambda=0$ in \cref{l:scalar}.  
  Consider, for instance, the lsc convex function $f\colon \RR \to \left[-\infty, +\infty\right]$ defined by 
   \begin{equation}
    f(x) := \begin{cases}
    	        -\sqrt{1 -x^2} &\text{~if~}|x| \leq 1, \\
    	        +\infty &\text{~otherwise},
    	    \end{cases}
   \end{equation}  
   which has $\partial f(\pm 1)=\varnothing$. Under the convention that $0\cdot(+\infty)=+\infty$, it follows that $0\cdot f=\iota_{\left[-1, 1\right]}$, where $\iota_C$ is the indicator function of a set $C$, so that $\partial (0\cdot f) =N_{\left[-1, 1\right]}$ and
\begin{equation}
\partial (0\cdot f)(-1) =\left]-\infty, 0\right] \quad\text{and}\quad \partial (0\cdot f)(1) =\left[0, +\infty\right[.
\end{equation}
   Alternatively, under the convention that $0\cdot(+\infty) =0 =0\cdot(-\infty)$ as suggested in \cite[Section~1E]{RW98}, we have $0\cdot f=0$ and hence that $\partial(0\cdot f)(\pm 1) =\partial(0)(\pm 1) = \{0\}$. 
  
  For our purposes, both conventions are problematic, and thus we shall treat the cases of $\lambda=0$ directly as it arises.
\end{remark}

As holds for the limiting subdifferential, the symmetric subdifferential also enjoys the following robustness property.
\begin{lemma}[Robustness of the symmetric subdifferential]
\label{l:robustness}
Let $f\colon X \to [-\infty, +\infty]$ and let $ x \in X$ with $|f(x)| <+\infty$. 
Then the symmetric subdifferential has the following \emph{robustness property}
\begin{equation}
\partial^0\! f(x) =\Limsup_{y\stackrel{f}{\to}x} \partial^0\! f(y) 
=\menge{x^* \in X}{\exists x_n\stackrel{f}{\to} x \text{~and~} x_n^* \to x^* \text{~with~} x_n^* \in \partial^0\! f(x_n)}. 
\end{equation}
\end{lemma}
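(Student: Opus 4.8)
The plan is to prove the two inclusions in the robustness identity separately, using the analogous robustness property of the limiting subdifferential as the workhorse. The nontrivial direction is $\Limsup_{y\stackrel{f}{\to}x}\partial^0\!f(y)\subseteq\partial^0\!f(x)$; the reverse inclusion is immediate by taking the constant sequence $x_n\equiv x$. So first I would recall that the limiting subdifferential is itself robust, \emph{i.e.}, $\partial f(x)=\Limsup_{y\stackrel{f}{\to}x}\partial f(y)$, which is part of its very definition via \eqref{e:lim_subdiff} (or \eqref{e:lim_eps_subdiff}); the delicate point is that no lower semicontinuity hypothesis is available here, so one must argue at the level of the analytic $\varepsilon$-subdifferentials rather than invoke \eqref{e:lim_subdiff} directly. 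The key observation is that the ``$y\stackrel{f}{\to}x$'' convergence mode is symmetric under negation: $y\stackrel{f}{\to}x$ if and only if $y\stackrel{-f}{\to}x$, since $f(y)\to f(x)\iff -f(y)\to -f(x)$. Hence $\Limsup_{y\stackrel{f}{\to}x}\partial^+\!f(y)=\Limsup_{y\stackrel{-f}{\to}x}\bigl(-\partial(-f)(y)\bigr)=-\Limsup_{y\stackrel{-f}{\to}x}\partial(-f)(y)=-\partial(-f)(x)=\partial^+\!f(x)$, using robustness of the limiting subdifferential applied to $-f$.

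Next I would handle the union. Let $x^*\in\Limsup_{y\stackrel{f}{\to}x}\partial^0\!f(y)$, so there are $x_n\stackrel{f}{\to}x$ and $x_n^*\to x^*$ with $x_n^*\in\partial^0\!f(x_n)=\partial f(x_n)\cup\partial^+\!f(x_n)$. Passing to a subsequence, either $x_n^*\in\partial f(x_n)$ for all $n$ along that subsequence, or $x_n^*\in\partial^+\!f(x_n)$ for all $n$. In the first case $x^*\in\Limsup_{y\stackrel{f}{\to}x}\partial f(y)=\partial f(x)\subseteq\partial^0\!f(x)$; in the second case $x^*\in\Limsup_{y\stackrel{f}{\to}x}\partial^+\!f(y)=\partial^+\!f(x)\subseteq\partial^0\!f(x)$ by the computation of the previous paragraph. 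Either way $x^*\in\partial^0\!f(x)$, which is the claimed inclusion; combined with the trivial reverse inclusion this gives the first equality, and the second equality is just the definition of $\Limsup$ unwound.

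The main obstacle is the absence of lower semicontinuity: one cannot simply cite the Fréchet-subdifferential form \eqref{e:lim_subdiff} for robustness of $\partial f$, and must instead rely on the $\varepsilon$-enlarged analytic form \eqref{e:lim_eps_subdiff}, verifying carefully that it expresses $\partial f(x)$ as a genuine sequential upper limit over $y\stackrel{f}{\to}x$ and $\varepsilon\downarrow 0$. Concretely, one checks that if $x_n\stackrel{f}{\to}x$ and $x_n^*\to x^*$ with $x_n^*\in\partial f(x_n)$, then a diagonal argument (choosing, for each $n$, a point $y_n$ with $\|y_n-x_n\|<1/n$, $|f(y_n)-f(x_n)|<1/n$, and $z_n^*\in\widehat{\partial}_{1/n}f(y_n)$ with $\|z_n^*-x_n^*\|<1/n$, which exists by \eqref{e:lim_eps_subdiff} applied at $x_n$) produces $y_n\stackrel{f}{\to}x$, $z_n^*\to x^*$, $z_n^*\in\widehat{\partial}_{1/n}f(y_n)$, whence $x^*\in\partial f(x)$ by \eqref{e:lim_eps_subdiff} again. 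This is the one place where real care is needed; everything else is bookkeeping with the plus-minus symmetry of $\partial^0$ and of the convergence mode $y\stackrel{f}{\to}x$.
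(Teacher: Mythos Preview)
Your proposal is correct and follows essentially the same approach as the paper: the trivial inclusion is immediate, and for the reverse you pass to a subsequence lying entirely in $\partial f$ or entirely in $\partial^+\! f$, then invoke robustness of each piece separately via a diagonal argument based on the analytic $\varepsilon$-subdifferential representation \eqref{e:lim_eps_subdiff}. You are in fact more explicit than the paper about why no lower semicontinuity is needed (using \eqref{e:lim_eps_subdiff} rather than \eqref{e:lim_subdiff}) and about why robustness transfers from $\partial f$ to $\partial^+\! f$ (via the symmetry $y\stackrel{f}{\to}x\iff y\stackrel{-f}{\to}x$), points the paper leaves implicit.
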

\begin{proof}
It is clear that 
\begin{equation}
\partial^0\! f(x) \subseteq \Limsup_{y\stackrel{f}{\to}x} \partial^0\! f(y). 
\end{equation}
To prove the opposite inclusion, we assume that $x_n\stackrel{f}{\to} x$ and $x_n^* \to x^*$ with $x_n^* \in \partial^0\! f(x_n)$. 
Since $\partial^0\! f =\partial f\cup \partial^+\! f$, by passing to subsequences if necessary, 
it suffices to prove the results assuming that the sequence $(x_n^*)_{\nnn}$ is contained only in either $\partial f$ or $\partial^+\! f$. 
To this end, by a diagonal subsequence argument we derive from \eqref{e:lim_eps_subdiff} that 
$\partial f$ and $\partial^+\! f$ both have the robustness property. 
Thus, in either case, the result follows.
\end{proof}

\begin{lemma}[Upper semicontinuity of the symmetric subdifferential]
\label{l:upsemicont}
Let $f\colon X \to [-\infty, +\infty]$ be Lipschitz continuous around $x \in X$ with $|f(x)| <+\infty$, 
and consider sequences $(x_n)_\nnn$ and $(x_n^*)_\nnn$ in $X$ such that $x_n \to x$ and $x_n^* \in \partial^0\! f(x_n)$ for every $\nnn$. 
Then $(x_n^*)_\nnn$ is bounded and its cluster points are contained in $\partial^0\! f(x)$.
\end{lemma}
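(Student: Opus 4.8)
The plan is to exploit the Lipschitz continuity of $f$ around $x$ to control $\partial^0\! f$ on a neighborhood, and then to invoke the robustness property established in \cref{l:robustness}. First I would fix a radius $\rho \in \RPP$ and a constant $\ell \in \RPP$ such that $f$ is $\ell$-Lipschitz on $\ball{x}{\rho}$; since $x_n \to x$, all but finitely many $x_n$ lie in $\inte \ball{x}{\rho}$, so without loss of generality I may assume $x_n \in \ball{x}{\rho/2}$ for every $\nnn$. The key local fact is that Lipschitz continuity with constant $\ell$ forces every element of the Fr\'echet subdifferential $\widehat{\partial} f(y)$ (and likewise of $\widehat{\partial}(-f)(y)$) at a point $y$ interior to the Lipschitz neighborhood to have norm at most $\ell$: indeed, if $y^* \in \widehat{\partial} f(y)$ then $\liminf_{z\to y}\frac{f(z)-f(y)-\scal{y^*}{z-y}}{\|z-y\|}\geq 0$, and testing along $z = y + t u$ for unit vectors $u$ and using $|f(z)-f(y)|\leq \ell\|z-y\|$ yields $\scal{y^*}{u}\leq \ell$ for all unit $u$, hence $\|y^*\|\leq \ell$. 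Passing to the limit in the analytic representation \eqref{e:lim_subdiff} (applicable to $f$ and to $-f$ separately on the neighborhood, where both are Lipschitz, hence continuous, hence lsc), this bound is inherited by $\partial f(y)$ and by $\partial^+\! f(y) = -\partial(-f)(y)$, and therefore by $\partial^0\! f(y) = \partial f(y)\cup\partial^+\! f(y)$, for all $y$ in the open Lipschitz neighborhood. Consequently $\|x_n^*\|\leq \ell$ for all large $\nnn$, which establishes boundedness of $(x_n^*)_\nnn$.

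For the second assertion, let $x^*$ be a cluster point of $(x_n^*)_\nnn$ and pass to a subsequence $(x_{k_n}^*)_\nnn$ with $x_{k_n}^* \to x^*$. I need $x_{k_n} \stackrel{f}{\to} x$, i.e. $f(x_{k_n})\to f(x)$, to apply \cref{l:robustness}; but this is immediate from the Lipschitz estimate $|f(x_{k_n}) - f(x)|\leq \ell\|x_{k_n}-x\|\to 0$. Hence $x_{k_n}\stackrel{f}{\to} x$ and $x_{k_n}^*\to x^*$ with $x_{k_n}^*\in\partial^0\! f(x_{k_n})$, so by the robustness property of \cref{l:robustness} we conclude $x^*\in\partial^0\! f(x)$.

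The only genuinely delicate point is the uniform norm bound on subdifferential elements on the Lipschitz neighborhood; everything else is bookkeeping with $\varepsilon$'s and subsequences. I would either spell out the Fr\'echet-subdifferential argument sketched above and then limit through \eqref{e:lim_subdiff}, or else cite the standard fact (e.g. \cite[Theorem~1.22]{Mor06}) that a function Lipschitz around $x$ with rank $\ell$ satisfies $\|x^*\|\leq\ell$ for every $x^*\in\partial f(x)$, applying it to both $f$ and $-f$ to cover $\partial^0\! f$. One small subtlety to flag: the robustness invocation needs the base point convergence in the "$f$-attentive" sense $x_{k_n}\stackrel{f}{\to}x$, which is exactly where Lipschitz continuity is used a second time, so it is worth stating explicitly rather than glossing over.
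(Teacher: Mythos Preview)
Your proposal is correct and follows essentially the same route as the paper: obtain a uniform bound $\|u^*\|\leq\ell$ on $\partial^0\! f(u)$ over a Lipschitz neighborhood (the paper cites \cite[Corollary~1.81]{Mor06} directly, whereas you sketch the Fr\'echet argument and limit through \eqref{e:lim_subdiff}, or alternatively cite a nearby result in \cite{Mor06}), then for any cluster point pass to a subsequence, use Lipschitz continuity to upgrade $x_{k_n}\to x$ to $x_{k_n}\stackrel{f}{\to}x$, and invoke \cref{l:robustness}. The only difference is the level of detail given for the norm bound; structurally the two proofs are identical.
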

\begin{proof}
By assumption, there exist a neighborhood $U$ of $x$ and a constant $\ell \in \RP$ such that
$f$ is Lipschitz continuous on $U$ with modulus $\ell$. 
In particular, $f$ and $-f$ are Lipschitz continuous around each $u \in U$ with modulus $\ell$. 
By \cite[Corollary~1.81]{Mor06} and \eqref{e:symdiff}, we have that
\begin{equation}
(\forall u \in U)(\forall u^* \in \partial^0\! f(u))\quad \|u^*\| \leq \ell.
\end{equation}
Since $x_n \to x$, it follows that $(x_n^*)_\nnn$ is bounded. 

Let $x^*$ be a cluster point of $(x_n^*)_\nnn$. 
Then there is a subsequence $(x_{k_n}^*)_\nnn$ converging to $x^*$.  
Noting that $x_{k_n} \to x$, the Lipschitz continuity of $f$ around $x$ yields $x_{k_n} \stackrel{f}{\to} x$. 
Now apply \cref{l:robustness}.
\end{proof}

\subsection{Coercivity}
Recall that a function $f\colon X \to \left[-\infty, +\infty\right]$ is \emph{coercive} if 
\begin{equation}
\lim_{\|x\|\to +\infty} f(x) =+\infty.
\end{equation}
For convenience, we recall some basic properties of coercivity.
\begin{fact}[Coercive functions]
\label{f:coer}
Let $f\colon X \to \left[-\infty, +\infty\right]$.
Then the following hold:
\begin{enumerate}
\item 
\label{f:coer_lev}
$f$ is coercive if and only if its lower level sets $\lev{\xi}f$ are bounded for all $\xi \in \RR$.
\item 
\label{f:coer_inf}
If $f$ is proper, convex, and coercive, then $\inf f(X) >-\infty$. 

\end{enumerate}
\end{fact}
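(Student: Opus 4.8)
The plan is to handle the two parts separately. Part \ref{f:coer_lev} is a routine sequential argument that I would carry out by contraposition in both directions, using that the following assertions are equivalent: (a) $f$ is not coercive; (b) there is a sequence $(x_n)_\nnn$ with $\|x_n\| \to +\infty$ along which $f(x_n)$ does not tend to $+\infty$; (c) there exist $\xi \in \RR$ and a sequence in $\lev{\xi}f$ whose norms tend to $+\infty$; (d) some lower level set $\lev{\xi}f$ is unbounded. The only implication that needs comment is (b)$\Rightarrow$(c): if $f(x_n) \not\to +\infty$, extract a subsequence $(x_{k_n})_\nnn$ along which $f(x_{k_n}) \le \xi$ for some fixed $\xi \in \RR$; then $(x_{k_n})_\nnn$ lies in $\lev{\xi}f$, which is therefore unbounded. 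Negating the equivalence (a)$\Leftrightarrow$(d) is exactly \ref{f:coer_lev}.

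For part \ref{f:coer_inf}, fix $x_0 \in \dom f$, so that $f(x_0) \in \RR$ by properness, and put $S := \lev{f(x_0)}f$. Then $x_0 \in S$, and by \ref{f:coer_lev} together with coercivity $S$ is bounded, say $S \subseteq \ball{0}{\rho}$ for some $\rho \in \RPP$. Since $X$ is finite-dimensional and $f$ is proper and convex, $f$ has a continuous affine minorant: pick $\bar x \in \reli(\dom f)$ and $u \in \partial f(\bar x)$ (nonempty by standard finite-dimensional convex analysis), so that $f(x) \ge \scal{u}{x} + \beta$ for all $x \in X$, where $\beta := f(\bar x) - \scal{u}{\bar x}$. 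Consequently $f(x) \ge \beta - \|u\|\rho$ for every $x \in S$, while $f(x) > f(x_0)$ for every $x \in X \setminus S$; hence $\inf f(X) \ge \min\{f(x_0),\, \beta - \|u\|\rho\} > -\infty$.

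The one ingredient beyond bookkeeping is the existence of the affine minorant, and that is the step to be careful about, because lower semicontinuity is \emph{not} assumed and so one cannot simply invoke the Fenchel--Moreau theorem. In finite dimensions it holds regardless: a proper convex function has a nonempty convex subdifferential at every point of the relative interior of its domain, and this requires neither lower semicontinuity nor properness of the closure. Alternatively, the whole of \ref{f:coer_inf} can be routed through the closure $\operatorname{cl} f$, which is proper, lsc, and convex, satisfies $\inf(\operatorname{cl} f)(X) = \inf f(X)$, and inherits coercivity from $f$ since $\lev{\xi}(\operatorname{cl} f)$ is contained in the closure of the bounded level set $\lev{\xi+1}f$; one then applies the lower-semicontinuous case of the statement (a proper lsc convex coercive function attains its infimum, cf.\ \cite{BC11}).
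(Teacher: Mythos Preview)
Your argument is correct. The paper, however, does not actually prove this fact: it simply cites \cite[Proposition~11.11]{BC11} for \ref{f:coer_lev} and \cite[Lemma~2.13]{BCN06} for \ref{f:coer_inf}. So the comparison is between a bare citation and your self-contained treatment. Your handling of \ref{f:coer_lev} is the standard sequential contraposition and needs no further comment. For \ref{f:coer_inf} your main route---bounding a sublevel set via \ref{f:coer_lev} and then invoking an affine minorant obtained from a subgradient at a point of $\reli(\dom f)$---is sound, and you are right to flag that the existence of such a subgradient in finite dimensions does not require lower semicontinuity (this is, e.g., Rockafellar's \emph{Convex Analysis}, Theorem~23.4). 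Your alternative via the lsc closure also works, though it leans on a couple of facts (properness of $\operatorname{cl} f$ for proper convex $f$, and $\inf(\operatorname{cl} f)=\inf f$) that deserve their own citations if you go that way. In short: the paper outsources the proof, whereas you supply one; what you gain is self-containment, at the cost of a little extra care about the non-lsc case that the cited references presumably handle internally.
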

\begin{proof}
\ref{f:coer_lev}: \cite[Proposition~11.11]{BC11}. 
\ref{f:coer_inf}: \cite[Lemma~2.13]{BCN06}. 
\end{proof}

The following preparatory lemma shows that coercivity is preserved under direct sums.
\begin{lemma}
\label{l:sumcoer}
Let $f\colon X \to \left[-\infty, +\infty\right]$ and let $g\colon Y \to \left[-\infty, +\infty\right]$, 
where $X$ and $Y$ are real Hilbert spaces.
Set $h\colon X\times Y \to \left[-\infty, +\infty\right]\colon (x, y)\mapsto f(x) +g(y)$.
Suppose that $f$ and $g$ are proper, convex, and coercive on $X$ and $Y$, respectively. 
Then $h$ is proper, convex, and coercive on $X\times Y$.
\end{lemma}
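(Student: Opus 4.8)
The plan is to verify the three asserted properties of $h$ in turn, each time reducing to the corresponding property of $f$ and $g$ by means of the identity $\dom h =\dom f\times \dom g$ together with the fact that $\|(x, y)\|^2 =\|x\|^2 +\|y\|^2$ on $X\times Y$. Properness comes first: since $f$ and $g$ are proper, neither attains the value $-\infty$, so for every $(x, y)$ the sum $f(x) +g(y)$ is an unambiguously defined element of $\left]-\infty, +\infty\right]$ and $h$ never takes $-\infty$; moreover $\dom h =\dom f\times \dom g \neq\varnothing$ because both factors are nonempty, so $h$ is proper. Convexity is equally direct: for $(x_1, y_1), (x_2, y_2) \in \dom h$ and $\lambda \in \left]0, 1\right[$ one has $x_1, x_2 \in \dom f$ and $y_1, y_2 \in \dom g$, and adding the convexity inequality \eqref{e:cvx} for $f$ to the one for $g$ yields the convexity inequality for $h$; equivalently, $h$ is the sum of the convex functions $(x, y)\mapsto f(x)$ and $(x, y)\mapsto g(y)$.

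For coercivity I would argue through level sets using \cref{f:coer}. Since $f$ and $g$ are proper, convex, and coercive, \cref{f:coer}\ref{f:coer_inf} gives $\alpha :=\inf f(X) >-\infty$ and $\beta :=\inf g(Y) >-\infty$. Fix $\xi \in \RR$. If $h(x, y) \leq \xi$, then $f(x) \leq \xi -g(y) \leq \xi -\beta$ and, symmetrically, $g(y) \leq \xi -\alpha$, so that $\lev{\xi}h \subseteq \lev{\xi -\beta}f \times \lev{\xi -\alpha}g$. By \cref{f:coer}\ref{f:coer_lev} the two level sets on the right-hand side are bounded, hence so is their Cartesian product and therefore the set $\lev{\xi}h$. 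Since $\xi$ was arbitrary, a final application of \cref{f:coer}\ref{f:coer_lev} shows $h$ to be coercive.

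The argument is routine, and the only place that demands a little care is the bookkeeping around the value $+\infty$. This is exactly why properness — which rules out $-\infty$ — is dispatched before anything else, so that $f(x) +g(y)$ is meaningful, and why the lower estimate on $h$ in the coercivity step is obtained through \cref{f:coer}\ref{f:coer_inf} rather than naively: the level-set inclusion above is only useful once one knows that $\alpha$ and $\beta$ are finite.
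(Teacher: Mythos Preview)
Your proof is correct and follows essentially the same approach as the paper: both arguments dispatch properness and convexity directly, invoke \cref{f:coer}\ref{f:coer_inf} to obtain finite infima $\alpha$ and $\beta$, and then use these bounds to reduce coercivity of $h$ to coercivity of $f$ and $g$. The only cosmetic difference is that the paper phrases the coercivity step as a proof by contradiction via an unbounded sequence, whereas you argue directly through the level-set inclusion $\lev{\xi}h \subseteq \lev{\xi-\beta}f \times \lev{\xi-\alpha}g$; these are two presentations of the same idea.
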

\begin{proof}
It immediately follows by assumption and definition that $h$ is proper and convex. 
Now \cref{f:coer}\ref{f:coer_inf} implies that 
\begin{equation}
\label{e:inf_fg}
\inf f(X) >-\infty \quad\text{and}\quad \inf g(Y) >-\infty.
\end{equation}
Suppose, by way of a contradiction, that $h$ is not coercive. Then, there exists a sequence $(x_n, y_n)_\nnn$ in $X\times Y$ such that $\|(x_n, y_n)\| =\sqrt{\|x_n\|^2 +\|y_n\|^2} \to +\infty$ and $(h(x_n, y_n))_\nnn$ is bounded above
that is, there exists $\mu\in\RR$ such that
\begin{equation}
(\forall\nnn)\quad h(x_n, y_n) =f(x_n) +g(y_n) \leq \mu. 
\end{equation} 
Combining with \eqref{e:inf_fg}, we obtain that $(f(x_n))_\nnn$ and $(g(y_n))_\nnn$ are bounded above.  
But since $f$ and $g$ are coercive, $(x_n)_\nnn$ and $(y_n)_\nnn$ must therefore be bounded, and thus so is $(x_n, y_n)_\nnn$
which contradicts the fact that $\|(x_n, y_n)\| \to +\infty$.
\end{proof}

\section{The Douglas--Rachford algorithm for finding a zero of a function}
\label{s:main}
From herein, we assume that
\begin{empheq}[box=\mybluebox]{equation}
\text{$f \colon X \to \left[-\infty, +\infty\right]$ is proper with closed graph.}
\end{empheq}
Note that, since $f$ is assumed proper, $\gra f$ is necessarily a closed set whenever $f$ is continuous throughout its effective domain in the sense that
\begin{equation}
(\forall x \in \dom f)\quad f(x) =\lim_{\substack{y \stackrel{\dom f}{\longrightarrow} x}} f(y).
\end{equation}
As the following examples show, the converse need not hold (\emph{i.e.,}~the graph of a discontinuous function can be closed) and, in general, mere lsc is not sufficent to ensure closedness of the graph.
\begin{example}[A discontinuous, lsc function with closed graph]
Consider $f\colon\RR\to\RR$ defined by
	   \begin{equation}
	    f(x) := \begin{cases}
	                1/|x| &\text{~if~} x\neq 0, \\
	                0   &\text{~if~} x=0.
	              \end{cases}
	  \end{equation}
Then $f$ is continuous except at $x=0$ where it is merely lsc. In particular, $f$ is lsc but not continuous. However, $f$ does have a closed graph. Indeed, the graph of $f$ may be expressed as the union of two closed sets: $\gra f=\gra\left(1/|\cdot|\right)\cup \{(0,0)\}$ where we note that $\gra\left(1/|\cdot|\right)$ is closed since $x\mapsto 1/|x|$ is continuous on its domain. \qed
\end{example}

It is known, see for instance \cite[Corollary~9.15]{BC11}, that every proper lsc convex function $f\colon \RR \to \left[-\infty, +\infty\right]$ 
is continuous throughout the closure of $\dom f$ and hence has a closed graph. 
However, this does not hold for proper lsc convex functions in $\RR^2$ which, as a consequence, gives rise to the following example.
\begin{example}[A proper lsc convex function with nonclosed graph]
Consider $f\colon \RR^2 \to \left[-\infty, +\infty\right]$ defined by
\begin{equation}
f(\alpha, \beta) :=\begin{cases}
\beta^2/\alpha &\text{~if~} \alpha >0, \\
0 &\text{~if~} (\alpha, \beta) =0, \\
+\infty &\text{~otherwise}.
\end{cases}
\end{equation}
Then $f$ is proper, lsc, and convex, as shown in \cite[Example~9.27]{BC11}.
Now setting $(\forall\nnn)$ $x_n =(1/(n +1)^2, 1/(n+1))$, 
we have that the sequence $(x_n, f(x_n))_\nnn$ lies in $\gra f$ but its limit $((0, 0), 1) \notin \gra f$, hence $\gra f$ is not closed.\qed
\end{example}

Our focus is the feasibility problem \eqref{e:prob} in the product Hilbert space $X\times \RR$ with constraints 
\begin{empheq}[box=\mybluebox]{equation}\label{eq:def A and B}
A :=X\times \{0\} \quad\text{and}\quad B :=\gra f, 
\end{empheq}
where $A\cap B \neq\varnothing$.
Note that, in the case in which $B$ is the \emph{epigraph} of a proper lower semicontinuous function (and hence a nonempty closed convex set), the convergence of the Douglas--Rachford algorithm was previously studied in \cite{BD16,BDNP16a,BDNP16}. Until this work, the case in which $B$ is the \emph{graph} of a proper function had not been considered even for the class of convex functions. It is also clear that, equivalently, our problem may be posed as
\begin{empheq}[box=\mybluebox]{equation}
\text{find a zero of the function $f$}
\end{empheq}
under the assumption that $f^{-1}(0) \neq\varnothing$.
In what follows, the sequence $(z_n)_\nnn$ shall denote a DR sequence for \eqref{eq:def A and B}, that is, any sequence which satisfies
\begin{empheq}[box=\mybluebox]{equation}
z_0 =(x_0, \rho_0) \in X\times \RR \quad\text{and}\quad (\forall\nnn)\quad z_{n+1} =(x_{n+1}, \rho_{n+1}) \in T_{A,B}z_n.
\end{empheq}
In this setting, the projector onto $A$ and the reflector across $A$ are given, respectively, by
\begin{equation}
\label{e:projA}
(\forall (x, \rho) \in X\times \RR)\quad P_A(x, \rho) =(x, 0),
\quad\text{and}\quad R_A(x, \rho) =(x, -\rho).
\end{equation}
Although the two possible DR operators, $T_{A,B}$ and $T_{B,A}$, associated with $A$ and $B$ give different algorithms, 
since $A$ is a subspace, it holds that $T_{B,A}^n =R_AT_{A,B}^nR_A$ for every $n\in\mathbb{N}$
(see \cite[Theorem~2.7(i) \& Remark~2.10(ii)-(iii)]{BM16}). Thus in order to study the DRA corresponding to $T_{B,A}$ it suffices just to study the DRA corresponding to $T_{A,B}$.

To begin, we collect some preparatory lemmas which we use to give a precise description of the DR iteration for the sets $A$ and $B$ in \eqref{eq:def A and B}. Our first result is concerned with the range of the DR operator.
\begin{lemma}[Range of $T_{A,B}$]
\label{l:ranT}
The following assertions hold.
\begin{enumerate}
\item 
\label{l:ranT_eq}
$(\forall (x, \rho) \in X\times \RR)\quad T_{A,B}(x, \rho) =(0, \rho) +P_B(x, -\rho)$.
\item 
\label{l:ranT_incl}
$\ran T_{A,B} :=T_{A,B}(X\times \RR) \subseteq \dom f \times \RR$.
\end{enumerate}
\end{lemma}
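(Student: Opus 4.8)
The plan is to compute $T_{A,B}(x,\rho)$ directly from the general formula \eqref{e:Tx} specialized to the first set $A=X\times\{0\}$, using the explicit expressions for $P_A$ and $R_A$ in \eqref{e:projA}. Since $A$ is closed (indeed a subspace) and convex, $P_A$ is single-valued, so by \cref{l:1cvx}\ref{l:1cvx_single} we have $T_{A,B}=\Id-P_A+P_BR_A$. Applying this at a point $(x,\rho)\in X\times\RR$ gives $T_{A,B}(x,\rho)=(x,\rho)-(x,0)+P_B(R_A(x,\rho))=(0,\rho)+P_B(x,-\rho)$, which is exactly assertion \ref{l:ranT_eq}. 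This part is a routine substitution; the only thing to be careful about is that $P_B$ is in general set-valued (the formula should be read with the usual convention that adding a fixed vector to a set means the translated set), since $B=\gra f$ need not be convex — but $P_B(x,-\rho)\neq\varnothing$ because $B$ is closed and $X\times\RR$ is finite-dimensional, so the set on the right-hand side is nonempty.

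For assertion \ref{l:ranT_incl}, the point is that every element of $\ran T_{A,B}$ is of the form $(0,\rho)+(u,v)$ with $(u,v)\in P_B(x,-\rho)$ for some $(x,\rho)$. Since $P_B(x,-\rho)\subseteq B=\gra f$, any such nearest point $(u,v)$ satisfies $u\in\dom f$ (and $v=f(u)$). Hence $(0,\rho)+(u,v)=(u,\rho+v)$ has first coordinate $u\in\dom f$, so it lies in $\dom f\times\RR$. Combining over all $(x,\rho)$ yields $\ran T_{A,B}\subseteq\dom f\times\RR$.

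I do not expect any genuine obstacle here; the lemma is essentially a bookkeeping computation. The one subtlety worth flagging explicitly in the write-up is the set-valued reading of $P_B$: one should either fix $(u,v)\in P_B(x,-\rho)$ and argue pointwise, or interpret the displayed equality in \ref{l:ranT_eq} as an equality of subsets of $X\times\RR$. A second minor point is that assertion \ref{l:ranT_incl} uses only $P_B(x,-\rho)\subseteq\gra f$ and the elementary fact that the first coordinate of any point of $\gra f$ lies in $\dom f$; no properties of $f$ beyond properness (to ensure $\gra f\neq\varnothing$, hence the domain is nonempty and the feasibility problem makes sense) and closedness of the graph (to guarantee $P_B$ is everywhere nonempty-valued) are needed.
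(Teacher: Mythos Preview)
Your proposal is correct and follows essentially the same approach as the paper: part~\ref{l:ranT_eq} is obtained by specializing \cref{l:1cvx}\ref{l:1cvx_single} together with \eqref{e:projA}, and part~\ref{l:ranT_incl} follows since $P_B(x,-\rho)\subseteq B=\gra f\subseteq\dom f\times\RR$. The only cosmetic difference is that the paper phrases~\ref{l:ranT_incl} via a chain of set inclusions rather than your pointwise argument, but the content is identical.
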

\begin{proof}
Let $(x, \rho) \in X\times \RR$. \ref{l:ranT_eq}: Combining \cref{l:1cvx}\ref{l:1cvx_single} and \eqref{e:projA} yields 
\begin{align}
T_{A,B}(x, \rho) &=(\Id -P_A +P_BR_A)(x, \rho) 
=(x, \rho) -(x, 0) +P_B(x, -\rho) 
=(0, \rho) +P_B(x, -\rho).
\end{align}
\ref{l:ranT_incl}: Since $B \subseteq \dom f \times \RR$, it follows from \ref{l:ranT_eq} that 
\begin{equation}
T_{A,B}(x, \rho) \subseteq (0, \rho) +B \subseteq (0, \rho) +\dom f \times \RR \subseteq \dom f \times \RR,
\end{equation}
which completes the proof.
\end{proof}

Note that, in view of \cref{l:ranT}\ref{l:ranT_incl}, from now on it suffices to assume that
\begin{empheq}[box=\mybluebox]{equation}
(\forall\nnn)\quad z_n =(x_n, \rho_n) \in \dom f \times \RR.
\end{empheq}
In the following lemma, we turn our attention to the projector onto $B=\gra f$. The provided characterization for $P_B$ will then be used in Lemma~\ref{l:DRstep} to describe the DR operator relative to $(A,B)$.
\begin{lemma}[Projector onto the graph of $f$]
\label{l:gra}
Let $(x, \rho) \in \dom f \times \RR$. Then $P_B(x, \rho)\neq\varnothing$ and, for any $(p, \pi) \in P_B(x, \rho)$, it holds that $p \in \dom f$ and $\pi =f(p)$. In addition, the following assertions hold.
\begin{enumerate}
\item
\label{l:gra_Lipschitz}
If $f$ is Lipschitz continuous around $p$, then 
\begin{subequations}
\label{e:proj_gra'}
\begin{align}
x &\in \begin{cases}
p +(f(p) -\rho)\partial f(p) &\text{~if~} f(p) \geq \rho, \\ 
p +(f(p) -\rho)\partial^+\! f(p) &\text{~if~} f(p) < \rho  
\end{cases} \\ 
&\subseteq p +(f(p) -\rho)\partial^0\! f(p).  
\end{align}
\end{subequations}
\item 
\label{l:gra_convex}
If $f$ is convex and $p \in \intdom f$, then 
\begin{equation}
\label{e:proj_gra}
x \in p +(f(p) -\rho)\partial f(p).
\end{equation}
\end{enumerate}
\end{lemma}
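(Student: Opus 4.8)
The plan is to recast the projection onto $\gra f$ as an unconstrained minimization over $X$, extract a subdifferential inclusion from Fermat's rule, and then evaluate the subdifferential that appears using the sum, product and scalar multiplication rules recorded above. The first assertion needs no real work: since $f$ has closed graph, $B=\gra f$ is a closed subset of the finite-dimensional space $X\times\RR$ (nonempty because $f$ is proper), hence proximinal by \cite[Corollary~3.13]{BC11}, so $P_B(x,\rho)\neq\varnothing$; and since every point of $\gra f$ has the form $(y,f(y))$ with $y\in\dom f$, any $(p,\pi)\in P_B(x,\rho)$ automatically satisfies $p\in\dom f$ and $\pi=f(p)$.

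Now fix $(p,f(p))\in P_B(x,\rho)$. Eliminating the second coordinate through $\eta=f(y)$ shows that $p$ is a global minimizer over $X$ of
\[
g(y):=\tfrac12\|y-x\|^2+g_2(y),\qquad g_2(y):=\tfrac12\bigl(f(y)-\rho\bigr)^2,
\]
where $g$ takes the value $+\infty$ off $\dom f$ because $\rho\in\RR$, and $g(p)$ is finite because $f(p)\in\RR$. Since $g_1:=\tfrac12\|\cdot-x\|^2$ is strictly differentiable with $\nabla g_1(p)=p-x$, Fermat's rule (\cref{f:Fermat}) combined with the sum rule (\cref{f:calculus}\ref{f:calculus_sum}) yields $0\in\partial g(p)=\partial g_2(p)+(p-x)$, that is, $x-p\in\partial g_2(p)$. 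Everything now reduces to computing $\partial g_2(p)$.

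For \ref{l:gra_Lipschitz}, put $h:=f-\rho$, which is Lipschitz around $p$, so $g_2=\tfrac12 h^2=\tfrac12(h\cdot h)$. The product rule (\cref{f:calculus}\ref{f:calculus_product}) gives $\partial(h\cdot h)(p)=\partial\bigl(h(p)h+h(p)h\bigr)(p)=\partial\bigl(2h(p)h\bigr)(p)$, whence $\partial g_2(p)=\tfrac12\partial\bigl(2h(p)h\bigr)(p)$ by the scalar multiplication rule (\cref{l:scalar}). The main obstacle is now the bookkeeping of the limiting versus the upper subdifferential: which of the two appears is dictated by the sign of the multiplier $2h(p)=2(f(p)-\rho)$ in a second application of \cref{l:scalar}. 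Concretely, when $f(p)>\rho$ one gets $\partial g_2(p)=h(p)\partial h(p)=(f(p)-\rho)\partial f(p)$; when $f(p)<\rho$ one gets $\partial g_2(p)=h(p)\partial^+\!h(p)=(f(p)-\rho)\partial^+\!f(p)$; and the borderline case $f(p)=\rho$ escapes \cref{l:scalar} altogether, but is rescued by the observation that $2h(p)h$ is then the zero function on a neighborhood of $p$, so that $\partial g_2(p)=\{0\}$ and hence $x=p$. (In the first two cases I also use $\partial h(p)=\partial f(p)$ and $\partial^+\!h(p)=\partial^+\!f(p)$, which come from \cref{f:calculus}\ref{f:calculus_sum} and the definition of $\partial^+$.) Substituting these into $x-p\in\partial g_2(p)$, and in the borderline case invoking $\partial f(p)\neq\varnothing$ --- valid since $f$ is Lipschitz around $p$, \cite[Corollary~1.81]{Mor06} --- together with $\partial f(p),\partial^+\!f(p)\subseteq\partial^0\!f(p)$, delivers both branches of \eqref{e:proj_gra'} and the accompanying $\partial^0$-inclusion.

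Finally, \ref{l:gra_convex} follows from \ref{l:gra_Lipschitz}: a proper convex function is locally Lipschitz on the interior of its domain, so \eqref{e:proj_gra'} applies at $p\in\intdom f$; and since $f$ is convex we have $\partial^+\!f(p)\subseteq\partial f(p)$ by \eqref{e:cvx_inclus}, so multiplying this inclusion by the scalar $f(p)-\rho$ turns the branch $f(p)<\rho$ of \eqref{e:proj_gra'} into $x\in p+(f(p)-\rho)\partial f(p)$ as well. Together with the branch $f(p)\ge\rho$, this is precisely \eqref{e:proj_gra}.
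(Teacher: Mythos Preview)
Your proof is correct and follows essentially the same route as the paper's: reduce to a minimization over $X$, apply Fermat's rule together with the sum rule to isolate $\partial g_2(p)$, then use the product and scalar multiplication rules to split into the cases $f(p)>\rho$, $f(p)<\rho$, and $f(p)=\rho$, handling the last case by observing that the relevant function vanishes near $p$. Your derivation of \ref{l:gra_convex} from \ref{l:gra_Lipschitz} via local Lipschitz continuity of convex functions on $\intdom f$ together with $\partial^+\!f(p)\subseteq\partial f(p)$ from \eqref{e:cvx_inclus} is exactly what the paper has in mind (the paper is terser here). One minor remark: the nonemptiness of $\partial f(p)$ for locally Lipschitz $f$ is better attributed to \cite[Corollary~2.25]{Mor06} than to \cite[Corollary~1.81]{Mor06}, which gives only boundedness of subgradients.
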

\begin{proof}
The existence of a point $(p, \pi) \in P_B(x, \rho)$ is ensured since the set $B =\gra f$ is a nonempty closed subset of $X\times \RR$. 
Since $(p, \pi) \in B =\gra f$, it holds that $p \in \dom f$ and $\pi =f(p)$.

\ref{l:gra_Lipschitz}:~Since $(p, f(p)) \in P_B(x, \rho)$, we have that
\begin{equation}
p \in \argmin_{y \in X} \|y -x\|^2 +|f(y) -\rho|^2,
\end{equation}
and, applying Fermat's rule (\cref{f:Fermat}), gives
\begin{equation}
0 \in \partial\big( \|\cdot -x\|^2 +(f(\cdot) -\rho)^2 \big)(p).
\end{equation} 
Using the sum and product rules (\cref{f:calculus}) and
noting that $\|\cdot -x\|^2$ is continuously (Fr\'echet) differentiable and hence strictly differentiable on $X$ with $\nabla\|\cdot -x\|^2(p) =2(p -x)$ (see, for instance, \cite[Example~16.11~\&~Corollary~17.36]{BC11}), 
we deduce that
\begin{equation}
0 \in \nabla\|\cdot -x\|^2(p) +\partial(f(\cdot) -\rho)^2(p) \\
=2(p -x) +\partial\big( 2(f(p) -\rho)(f(\cdot) -\rho) \big)(p). 
\end{equation}
Now by the scalar multiplication rule (\cref{l:scalar}), 
\begin{equation}
\partial\big( 2(f(p) -\rho)(f(\cdot) -\rho) \big)(p) 
=\begin{cases}
2(f(p) -\rho)\partial f(p) &\text{~if~} f(p) > \rho, \\ 
2(f(p) -\rho)\partial^+\! f(p) &\text{~if~} f(p) < \rho.
\end{cases} 
\end{equation}
Finally, if $f(p) =\rho$, then, since by assumption $p\in\inte\dom f$, the function $2(f(p) -\rho)(f(\cdot) -\rho)$ is zero around $p$. Consequently, $\partial\big( 2(f(p) -\rho)(f(\cdot) -\rho) \big)(p) =\{0\} =2(f(p) -\rho)\partial f(p)$ where $\partial f(p) \neq\varnothing$ due to \cite[Corollary~2.25]{Mor06}. Altogether, we have proven \eqref{e:proj_gra'}.

\ref{l:gra_convex}:~Since $f$ is proper and convex,  $f$ is locally Lipschitz continuous on $\intdom f$ \cite[Corollary~8.32]{BC11}. The claim thus follows from \ref{l:gra_Lipschitz}.
\end{proof}

\begin{lemma}[One DR step]
\label{l:DRstep}
Let $(x, \rho) \in \dom f \times \RR$ 
and let $(x_+, \rho_+) \in T_{A,B}(x, \rho)$. 
Then 
\begin{subequations}
\label{e:DRstep0}
\begin{gather}
(x_+, f(x_+)) \in P_B(x, -\rho), \quad \rho_+ =\rho +f(x_+), \quad\text{and} \\
\|x -x_+\|^2 \leq (f(x) -f(x_+))(f(x) +f(x_+) +2\rho).
\end{gather}
\end{subequations}
Suppose, in addition, that $f$ is Lipschitz continuous around $x_+$.
Then there exists $x^* \in \partial^0\! f(x_+)$ such that
\begin{equation}
\label{e:DRstep}
\rho_+ =\rho +f(x_+), \quad x_+ =x -\rho_+x_+^*, 
\quad\text{and}\quad x_+^* \in \begin{cases}
\partial f(x_+) &\text{~if~} \rho_+ \geq 0, \\ 
\partial^+\! f(x_+)  &\text{~if~} \rho_+ <0.
\end{cases} 
\end{equation}
and, furthermore, the following assertions hold.
\begin{enumerate}
	\item\label{l:DRstep_i} If $f$ is strictly differentiable at $x_+$ with $\nabla f(x_+) =0$, then $x_+ =x$.
	\item\label{l:DRstep_ii} If $f$ is convex and $0\in\partial f(x_+)$, then either $x_+=x$ or $0\not\in\partial f(x)$.
\end{enumerate}	
\end{lemma}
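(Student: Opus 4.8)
The plan is to treat the three displayed conclusions in \eqref{e:DRstep0} and \eqref{e:DRstep} as essentially immediate consequences of the two previous lemmas, and then focus the real work on parts \ref{l:DRstep_i} and \ref{l:DRstep_ii}. First I would invoke \cref{l:ranT}\ref{l:ranT_eq}: since $(x_+,\rho_+)\in T_{A,B}(x,\rho)=(0,\rho)+P_B(x,-\rho)$, there is $(p,\pi)\in P_B(x,-\rho)$ with $x_+=p$ and $\rho_+=\rho+\pi$; by \cref{l:gra} applied to the point $(x,-\rho)\in\dom f\times\RR$, we have $p\in\dom f$ and $\pi=f(p)=f(x_+)$, giving both $(x_+,f(x_+))\in P_B(x,-\rho)$ and $\rho_+=\rho+f(x_+)$. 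The norm inequality in \eqref{e:DRstep0} comes from comparing the squared distance realized at $(x_+,f(x_+))$ against the competitor $(x,-\rho)$ itself is wrong; rather one compares $(p,\pi)$ with the feasible point $(x, ?)$—actually the clean route is: since $(x_+,f(x_+))$ minimizes $\|\cdot-(x,-\rho)\|^2$ over $\gra f$ and $(x,f(x))\in\gra f$ (valid because $x\in\dom f$), we get $\|x_+-x\|^2+(f(x_+)+\rho)^2\le\|x-x\|^2+(f(x)+\rho)^2$, i.e. $\|x-x_+\|^2\le(f(x)+\rho)^2-(f(x_+)+\rho)^2=(f(x)-f(x_+))(f(x)+f(x_+)+2\rho)$, which is exactly the claimed bound. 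Then \eqref{e:DRstep}, under Lipschitz continuity of $f$ around $x_+$, is just \cref{l:gra}\ref{l:gra_Lipschitz} applied at $(x,-\rho)$ with its $\rho$-slot equal to $-\rho$: it yields $x\in x_+ +(f(x_+)+\rho)\partial f(x_+)$ when $f(x_+)\ge-\rho$, i.e. $\rho_+\ge0$, and $x\in x_+ +(f(x_+)+\rho)\partial^+\!f(x_+)$ when $\rho_+<0$; writing $f(x_+)+\rho=\rho_+$ and extracting the relevant $x_+^*$ gives $x_+=x-\rho_+x_+^*$ with $x_+^*$ in the stated subdifferential, and in either case $x_+^*\in\partial^0\!f(x_+)$.

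For \ref{l:DRstep_i}, strict differentiability of $f$ at $x_+$ forces $f$ to be (locally Lipschitz and hence) in particular Lipschitz continuous around $x_+$, so \eqref{e:DRstep} applies and moreover $\partial f(x_+)=\partial^+\!f(x_+)=\{\nabla f(x_+)\}=\{0\}$ by the strict-differentiability identity recalled in the excerpt. Hence, regardless of the sign of $\rho_+$, we have $x_+^*=0$ and therefore $x_+=x-\rho_+\cdot 0=x$. For \ref{l:DRstep_ii}, $f$ convex means $f$ is locally Lipschitz on $\intdom f$; the subtlety is whether $x_+$ lies in the interior of the domain, but since $0\in\partial f(x_+)$ is assumed, the subdifferential is nonempty at $x_+$, and for a convex function $\partial f(x_+)\ne\varnothing$ forces $x_+$ to not be too pathological—still, to stay safe I would argue directly using the convex-analytic subdifferential inequality \eqref{e:Fensubdiff} rather than appealing to differentiability. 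The argument: from \eqref{e:DRstep0}, $(x_+,f(x_+))\in P_B(x,-\rho)$, and one shows $x-x_+\in\rho_+\,\partial f(x_+)$ directly. Indeed, by convexity $\gra f$ and $\epi f$ interact well; alternatively, I would use \cref{l:gra}\ref{l:gra_convex} if $x_+\in\intdom f$, and handle the boundary case separately noting that $0\in\partial f(x_+)$ already encodes the needed information.

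The cleanest path for \ref{l:DRstep_ii}: suppose $0\in\partial f(x_+)$ and $x_+\ne x$; we must show $0\notin\partial f(x)$. Since $f$ is convex and $0\in\partial f(x_+)$, the point $x_+$ is a global minimizer of $f$, so $f(x_+)\le f(x)$. If $f(x_+)=f(x)$ then $x$ is also a global minimizer, so one would expect $0\in\partial f(x)$—wait, that would contradict the claim, so the statement must be using that $x_+\ne x$ together with something forcing $f(x_+)<f(x)$. Re-examining: from \eqref{e:DRstep0}, $\|x-x_+\|^2\le(f(x)-f(x_+))(f(x)+f(x_+)+2\rho)$; if $x_+\ne x$ the left side is strictly positive, hence $f(x)-f(x_+)\ne0$, and since $x_+$ minimizes $f$ we get $f(x)>f(x_+)$, so $x$ is \emph{not} a global minimizer of $f$, whence $0\notin\partial f(x)$ (by \eqref{e:Fensubdiff}, $0\in\partial f(x)$ would make $x$ a global minimizer). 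That is the whole argument, and it is short.

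The main obstacle I anticipate is not any single hard estimate but rather bookkeeping the sign conventions: \cref{l:gra} is stated for a generic second coordinate, and here it is applied at $(x,-\rho)$, so every occurrence of ``$\rho$'' in \cref{l:gra} must be replaced by ``$-\rho$'', turning the case split ``$f(p)\ge\rho$ vs.\ $f(p)<\rho$'' into ``$f(x_+)\ge-\rho$ vs.\ $f(x_+)<-\rho$'', i.e.\ ``$\rho_+\ge0$ vs.\ $\rho_+<0$''; getting this substitution exactly right, and correctly identifying $f(x_+)-(-\rho)=\rho_+$ as the scalar multiplying the subdifferential, is where a careless slip would occur. Everything else is a direct citation of \cref{l:ranT}, \cref{l:gra}, the strict-differentiability identity, and the Fenchel subdifferential inequality \eqref{e:Fensubdiff}.
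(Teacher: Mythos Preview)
Your proposal is correct and follows essentially the same route as the paper: invoke \cref{l:ranT}\ref{l:ranT_eq} and \cref{l:gra} to obtain \eqref{e:DRstep0} (comparing the projection $(x_+,f(x_+))$ against the competitor $(x,f(x))\in\gra f$ for the norm inequality), apply \cref{l:gra}\ref{l:gra_Lipschitz} at $(x,-\rho)$ with the sign substitution you flag to get \eqref{e:DRstep}, and then for \ref{l:DRstep_i} use $\partial^0\! f(x_+)=\{0\}$, while for \ref{l:DRstep_ii} use the norm inequality in \eqref{e:DRstep0} together with the fact that $0\in\partial f(x_+)$ makes $x_+$ a global minimizer. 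Your ``cleanest path'' for \ref{l:DRstep_ii} is exactly the paper's argument, and your concern about whether $x_+\in\intdom f$ is indeed unnecessary since that route bypasses \cref{l:gra} entirely.
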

\begin{proof}
It follows from \cref{l:ranT}\ref{l:ranT_eq} that 
$(x_+, \rho_+ -\rho) =(x_+, \rho_+) -(0, \rho) \in P_B(x, -\rho)$ 
and from \cref{l:gra} that $x_+ \in \dom f$ and $\rho_+ -\rho =f(x_+)$.
Altogether, $(x_+, f(x_+)) \in P_B(x, -\rho)$ and $\rho_+ =\rho +f(x_+)$.
The former implies that
\begin{equation}
\|x_+ -x\|^2 +|f(x_+) +\rho|^2 \leq \|x -x\|^2 +|f(x) +\rho|^2 =|f(x) +\rho|^2,
\end{equation}
which completes the proof of \eqref{e:DRstep0}.

Now assume that $f$ is Lipschitz continuous around $x_+$.
By \cref{l:gra}\ref{l:gra_Lipschitz},  
\begin{equation}
x =x_+ +(\rho +f(x_+))x_+^* 
\quad\text{for some~} x_+^* \in \begin{cases}
\partial f(x_+) &\text{~if~} f(x_+) \geq -\rho, \\ 
\partial^+\! f(x_+)  &\text{~if~} f(x_+) < -\rho,
\end{cases} 
\end{equation} 
from which \eqref{e:DRstep} follows since $\partial^0\! f(x_+) =\partial f(x_+)\cup \partial^+\! f(x_+)$. Furthermore, we argue as follows.

\ref{l:DRstep_i}: If $f$ is strictly differentiable at $x_+$ with $\nabla f(x_+) =0$, 
then $\partial f(x_+) =\partial^+\! f(x_+) =\partial^0\! f(x_+) =\{0\}$, and so $x_+^* =0$, which gives $x_+ =x$.

\ref{l:DRstep_ii}: Suppose $f$ is convex, $0\in\partial f(x_+)$ and $x_+\neq x$. 
Then \eqref{e:DRstep0} yields
\begin{equation}
\label{e:DRstep0'}
0 <\|x -x_+\|^2 \leq (f(x) -f(x_+))(f(x) +f(x_+) +2\rho).
\end{equation}
Since $0\in\partial f(x_+)$, we have $f(x_+)=\min f(X)$ and hence $f(x_+) \leq f(x)$. 
By \eqref{e:DRstep0'}, the inequality is actually strict, that is, $f(x_+) < f(x)$ which implies that $f(x) >\min f(X)$ and hence $0\not\in\partial f(x)$.
\end{proof}

Recall that the set of \emph{fixed points} of $T_{A,B}$ is the set $\Fix T_{A,B} :=\menge{z \in X\times \RR}{z \in T_{A,B}z}$. 
If $A$ and $B$ were convex sets, the fixed point of the DR operator can be precisely described \cite[Corollary~3.9]{BCL04}. 
Although $B$ is not convex in our setting, we are still, nevertheless, able to arrive at the following satisfactory characterization.

\begin{lemma}[Fixed points of $T_{A,B}$]
\label{l:FixT}
The following assertions hold.
\begin{enumerate}
\item 
\label{l:FixT_basic}
$(\forall (x, \rho) \in \Fix T_{A,B})$ $f(x) =0$ and $(x, 0) \in P_B(x, -\rho)$. Consequently,
\begin{equation}
A\cap B\subseteq \Fix T_{A,B}\subseteq f^{-1}(0)\times \RR
\quad\text{and}\quad P_A\Fix T_{A,B} =A\cap B.
\end{equation}
\item
\label{l:FixT_positive}
If $\min f(X) =0$, then
\begin{equation}
A\cap B \subseteq f^{-1}(0)\times \RP \subseteq \Fix T_{A,B}.
\end{equation}

\item
\label{l:FixT_Lipschitz}
If $f$ is locally Lipschitz continuous on $f^{-1}(0)$, then 
\begin{equation}
\Fix T_{A,B}\subseteq (A\cap B)\cup 
\left( f^{-1}(0)\cap (\partial f)^{-1}(0)\times \RPP \right)\cup \left( f^{-1}(0)\cap (\partial^+\! f)^{-1}(0)\times \RMM \right).
\end{equation}
In particular, if $f^{-1}(0)\cap (\partial f)^{-1}(0) =f^{-1}(0)\cap (\partial^+\! f)^{-1}(0) =\varnothing$, 
then $\Fix T_{A,B} =A\cap B$.
\item 
\label{l:FixT_convex}
If $f$ is convex and $f^{-1}(0) \subseteq \intdom f$, then 
\begin{equation}\label{eq:FixT_convex}
\Fix T_{A,B}\subseteq (A\cap B)\cup \left( f^{-1}(0)\cap (\partial f)^{-1}(0)\times \RR\smallsetminus \{0\}\right).
\end{equation} 
In particular, if $\inf f(X) <0$, then $\Fix T_{A,B} =A\cap B$.
\end{enumerate}	
\end{lemma}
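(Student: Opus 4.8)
The plan is to prove each of the four assertions of Lemma~\ref{l:FixT} by unwinding the fixed-point condition $(x,\rho)\in T_{A,B}(x,\rho)$ through the characterizations of $T_{A,B}$ and $P_B$ obtained in Lemmas~\ref{l:ranT}, \ref{l:gra}, and \ref{l:DRstep}. The common starting observation is that $(x,\rho)\in T_{A,B}(x,\rho)$ is an instance of ``one DR step'' with $(x_+,\rho_+)=(x,\rho)$, so \eqref{e:DRstep0} applies verbatim.

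For \ref{l:FixT_basic}: plugging $(x_+,\rho_+)=(x,\rho)$ into \eqref{e:DRstep0} gives $(x,f(x))\in P_B(x,-\rho)$ and $\rho=\rho+f(x)$, whence $f(x)=0$; then $(x,f(x))=(x,0)\in P_B(x,-\rho)$. This immediately yields $\Fix T_{A,B}\subseteq f^{-1}(0)\times\RR$. For the reverse inclusion $A\cap B\subseteq\Fix T_{A,B}$, if $(x,0)\in A\cap B$ then $f(x)=0$, so $(x,0)$ is a nearest point of $(x,0)$ in $B$ (distance zero), and by \cref{l:ranT}\ref{l:ranT_eq}, $T_{A,B}(x,0)=(0,0)+P_B(x,0)\ni (0,0)+(x,0)=(x,0)$. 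The identity $P_A\Fix T_{A,B}=A\cap B$ then follows: $P_A(f^{-1}(0)\times\RR)=f^{-1}(0)\times\{0\}=A\cap B$ by \eqref{e:projA}, combined with the two inclusions just proved and the fact $A\cap B\subseteq\Fix T_{A,B}$.

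For \ref{l:FixT_positive}: if $\min f(X)=0$ and $(x,\rho)\in f^{-1}(0)\times\RP$, I need $(x,\rho)\in T_{A,B}(x,\rho)$, i.e.\ by \cref{l:ranT}\ref{l:ranT_eq} that $(x,\rho-\rho)=(x,0)\in P_B(x,-\rho)$. Since $f(x)=0\le f(y)$ for all $y$ and $\rho\ge 0$, for any $(y,f(y))\in B$ one has $\|(y,f(y))-(x,-\rho)\|^2=\|y-x\|^2+(f(y)+\rho)^2\ge (0+\rho)^2=\|(x,0)-(x,-\rho)\|^2$, using $f(y)\ge 0$ and $\rho\ge 0$ to conclude $(f(y)+\rho)^2\ge\rho^2$; hence $(x,0)\in P_B(x,-\rho)$, as required. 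For \ref{l:FixT_Lipschitz} and \ref{l:FixT_convex}: take $(x,\rho)\in\Fix T_{A,B}$, so $f(x)=0$ by part~\ref{l:FixT_basic}; apply \cref{l:DRstep} with $(x_+,\rho_+)=(x,\rho)$ (legitimate since $f$ is Lipschitz around $x_+=x$ in case~\ref{l:FixT_Lipschitz}, and in case~\ref{l:FixT_convex} since $f$ convex with $x\in f^{-1}(0)\subseteq\intdom f$ is locally Lipschitz there by \cite[Corollary~8.32]{BC11}, invoking \cref{l:gra}\ref{l:gra_convex}). This gives $x=x-\rho x^*$ for some $x^*\in\partial^0\! f(x)$ (with $x^*\in\partial f(x)$ if $\rho\ge 0$ and $x^*\in\partial^+\! f(x)$ if $\rho<0$), so $\rho x^*=0$: either $\rho=0$, giving $(x,0)\in A\cap B$, or $x^*=0$, giving $x\in(\partial f)^{-1}(0)$ when $\rho>0$ and $x\in(\partial^+\! f)^{-1}(0)$ when $\rho<0$; this is exactly the claimed decomposition. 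In the convex case, $0\in\partial f(x)$ together with convexity forces $0=f(x)=\min f(X)$, and $\partial^+\! f(x)\subseteq\partial f(x)$ by \eqref{e:cvx_inclus}, so both the $\RPP$ and $\RMM$ pieces collapse into $f^{-1}(0)\cap(\partial f)^{-1}(0)\times(\RR\smallsetminus\{0\})$, yielding \eqref{eq:FixT_convex}. Finally, if $\inf f(X)<0$ then no $x$ has $0\in\partial f(x)$ (else $f(x)=\min f(X)$ would be attained and equal $0$, contradicting $\inf f(X)<0$), so the second set in \eqref{eq:FixT_convex} is empty and $\Fix T_{A,B}=A\cap B$ by part~\ref{l:FixT_basic}; similarly the ``in particular'' clause of \ref{l:FixT_Lipschitz} is immediate.

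The only genuinely delicate points are bookkeeping ones: in \ref{l:FixT_Lipschitz} and \ref{l:FixT_convex} one must correctly track the three cases $\rho>0$, $\rho<0$, $\rho=0$ arising from the branch in \eqref{e:DRstep}, and in \ref{l:FixT_positive} one must be careful that the inequality $(f(y)+\rho)^2\ge\rho^2$ genuinely uses both $f(y)\ge 0$ and $\rho\ge 0$ (it can fail if $\rho<0$). No step requires new machinery beyond the preceding lemmas.
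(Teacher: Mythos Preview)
Your proof is essentially correct and follows the same approach as the paper: reduce the fixed-point condition to $(x,0)\in P_B(x,-\rho)$ via \cref{l:ranT}\ref{l:ranT_eq} (you route this through \eqref{e:DRstep0}, which is equivalent), and then in \ref{l:FixT_Lipschitz}--\ref{l:FixT_convex} invoke the first-order description of $P_B$ (you use \eqref{e:DRstep}; the paper uses \cref{l:gra}\ref{l:gra_Lipschitz} directly, but \eqref{e:DRstep} is just its repackaging).

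One small slip in the ``in particular'' clause of~\ref{l:FixT_convex}: as written, the claim ``no $x$ has $0\in\partial f(x)$'' is false when $\inf f(X)<0$ is \emph{attained} (then $0\in\partial f(x_0)$ at the minimizer $x_0$). What you actually need---and what your parenthetical reasoning in fact establishes once read in context---is that no $x\in f^{-1}(0)$ has $0\in\partial f(x)$, since $0\in\partial f(x)$ forces $f(x)=\min f(X)$, and combining with $f(x)=0$ contradicts $\inf f(X)<0$. This is exactly the paper's argument and suffices to make $f^{-1}(0)\cap(\partial f)^{-1}(0)=\varnothing$.
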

\begin{proof}
\ref{l:FixT_basic}:~Let $(x, \rho) \in \Fix T_{A,B}$. Then, by \cref{l:ranT}\ref{l:ranT_eq}, we have
\begin{equation}
\label{e:(x,0)}
(x, \rho) \in T_{A,B}(x, \rho) =(0, \rho) +P_B(x, -\rho) \iff (x, 0) \in P_B(x, -\rho).
\end{equation}
On the one hand, \eqref{e:(x,0)} implies $(x, 0) \in B =\gra f$, so that $f(x) =0$, and hence $(x, \rho) \in f^{-1}(0) \times \RR$. 
On the other hand, \eqref{e:(x,0)} gives
\begin{equation}
P_A(x, \rho) =(x,0) \in P_B(x, -\rho),
\end{equation}
which proves that $P_A(x, \rho) \in A\cap B$. 
We deduce that $\Fix T_{A,B}\subseteq f^{-1}(0) \times \RR$ and $P_A\Fix T_{A,B} \subseteq A\cap B$. 
It straight-forward to show that $A\cap B \subseteq \Fix T_{A,B}$ 
from which it follows that $A\cap B =P_A(A\cap B) \subseteq P_A\Fix T_{A,B}$. 

\ref{l:FixT_positive}:~We immediately have that $A\cap B =f^{-1}\times \{0\} \subseteq f^{-1}(0)\times \RP$.
Now let $(x, \rho) \in f^{-1}(0)\times \RP$. Again by \cref{l:ranT}\ref{l:ranT_eq}, $T_{A,B}(x, \rho) =(0, \rho) +P_B(x, -\rho)$. 
It follows from $\min f(X) =0 =f(x)$ and $\rho \in \RP$ that 
\begin{equation}
\argmin_{y \in X} \|y -x\|^2 +|f(y) +\rho|^2 =x
\end{equation} 
and therefore $P_B(x, -\rho) =(x, f(x)) =(x, 0)$, which yields $T_{A,B}(x, \rho) =(0, \rho) +(x, 0) =(x, \rho)$, that is, $(x, \rho) \in \Fix T_{A,B}$. Hence $f^{-1}(0)\times \RP \subseteq \Fix T_{A,B}$.

\ref{l:FixT_Lipschitz}:~Let $(x, \rho) \in \Fix T_{A,B}$. 
By \ref{l:FixT_basic}, $f(x) =0$ and $(x, 0) \in P_B(x, -\rho)$. 
If $\rho =0$, then $f(x) =\rho =0$, and hence the fixed point $(x, \rho) \in A\cap B$. 
If $\rho \neq 0$, then, by using \cref{l:gra}\ref{l:gra_Lipschitz},  
\begin{equation}
x \in \begin{cases}
x +(0 +\rho)\partial f(x) & \text{if }\rho>0,  \\
x +(0 +\rho)\partial f^+(x) &\text{if }\rho<0
\end{cases}
\quad\implies\quad
0 \in \begin{cases}
\partial f(x) & \text{if }\rho>0,   \\
\partial f^+(x) &\text{if }\rho<0. \\
\end{cases}	      
\end{equation} 
Thus either $(x, \rho) \in f^{-1}(0)\cap (\partial f)^{-1}(0)\times \RPP$ 
or $(x, \rho) \in f^{-1}(0)\cap (\partial^+\! f)^{-1}(0)\times \RMM$ 
which completes the proof of the claim.

\ref{l:FixT_convex}:~By the assumptions on $f$ and \cite[Corollary~8.32]{BC11}, 
$f$ is locally Lipschitz continuous on $f^{-1}(0) \subseteq \intdom f$. 
The first claim by applying \ref{l:FixT_Lipschitz} 
and notating from convexity that $\partial f =\partial f^0 =\partial f\cup \partial^+\! f$ (see \eqref{e:cvx_inclus}).

To prove the second claim, suppose that there exists $x \in f^{-1}(0)\cap (\partial f)^{-1}(0)$, that is,
$f(x) =0$ and $0 \in \partial f(x)$. But then $\min f(X) =f(x) =0$ which contradicts the assumption that $\inf f(X) <0$, hence we deduce that $f^{-1}(0)\cap (\partial f)^{-1}(0) =\varnothing$. The conclusion follows.
\end{proof}

Roughly speaking, Lemma~\ref{l:FixT} shows that the fixed point set of $T_{A,B}$ consists of two parts: the intersection $A\cap B$ and a set containing critical points of $f$. In  the following result, we give conditions under which the DRA stays away from critical points.

For convenience, we denote $\Delta:=T_{A,B}(\dom f \times \RR)$ and the \emph{first coordinate projection}  by
\begin{equation}
\Pi\colon X \times \RR \to X\colon (y, \sigma) \mapsto y.
\end{equation}
\begin{corollary}
\label{c:bounded}
Suppose that one of the following holds:
\begin{enumerate}
\item
\label{c:bounded_Lipschitz}
$f$ is locally Lipschitz continuous on $\Pi(\Delta)$
and, for every $x \in (\nabla f)^{-1}(0)$, either 
\begin{equation}
\label{e:bounded}
f(x) <\min\{0, \sup f(\dom f)\}\quad\text{or}\quad f(x) >\max\{0, \inf f(X)\}.
\end{equation}
\item 
\label{c:bounded_convex}
$f$ is convex with $\dom f$ open, and $\inf f(X) <\min\{0, \sup f(\dom f)\}$. 
\end{enumerate}
Then the set $S :=\menge{\nnn}{\text{$f$ is strictly differentiable at $x_n$ with $\nabla f(x_n) =0$}}$ is bounded.
\end{corollary}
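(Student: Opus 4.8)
The plan is to show that $S$ is ``downward closed'' in $\NN$ --- i.e., $n\in S$ with $n\geq 1$ implies $n-1\in S$ --- and that $S$ cannot equal all of $\NN$; since a downward-closed subset of $\NN$ is either finite or all of $\NN$, this yields boundedness of $S$. For the freezing property, observe that strict differentiability of $f$ at $x_n$ forces $f$ to be Lipschitz continuous around $x_n$ (this is also immediate in case~\ref{c:bounded_Lipschitz}, since $z_n\in T_{A,B}z_{n-1}\subseteq\Delta$ gives $x_n\in\Pi(\Delta)$). Hence \cref{l:DRstep}\ref{l:DRstep_i}, applied to the step $(x_n,\rho_n)\in T_{A,B}(x_{n-1},\rho_{n-1})$, gives $x_n=x_{n-1}$; as $f$ is then strictly differentiable at $x_{n-1}=x_n$ with $\nabla f(x_{n-1})=\nabla f(x_n)=0$, we conclude $n-1\in S$. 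It thus remains to rule out $S=\NN$.

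Assume $S=\NN$. Then $x_n=x_0=:a$ for every $\nnn$, with $f$ strictly differentiable at $a$ and $\nabla f(a)=0$. Applying \cref{l:DRstep} to the step $(x_{n+1},\rho_{n+1})\in T_{A,B}(x_n,\rho_n)$ gives $(a,f(a))\in P_B(a,-\rho_n)$ and $\rho_{n+1}=\rho_n+f(a)$, hence $\rho_n=\rho_0+nf(a)$ for all $n$. Since $a\in(\nabla f)^{-1}(0)$, the dichotomy~\eqref{e:bounded} applies to $a$ under~\ref{c:bounded_Lipschitz}; in particular $f(a)\neq 0$. Comparing $(a,f(a))\in P_B(a,-\rho_n)$ with an arbitrary $(y,f(y))\in\gra f=B$ in the defining minimization of the projector yields
\begin{equation*}
\big(f(a)-f(y)\big)\big(f(a)+f(y)+2\rho_n\big)\;\leq\;\|y-a\|^2\qquad(\forall\nnn,\ \forall y\in\dom f).
\end{equation*}
If $f(a)>\max\{0,\inf f(X)\}$, choose $y_0\in\dom f$ with $f(y_0)<f(a)$ (possible since $f(a)>\inf f(X)$); then $f(a)-f(y_0)>0$ is fixed while $f(a)>0$ forces $\rho_n\to+\infty$, so the left-hand side tends to $+\infty$, contradicting the constant upper bound $\|y_0-a\|^2$. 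The alternative $f(a)<\min\{0,\sup f(\dom f)\}$ is symmetric: pick $y_0\in\dom f$ with $f(y_0)>f(a)$, note $f(a)-f(y_0)<0$ and $\rho_n\to-\infty$, so the left-hand side again tends to $+\infty$, a contradiction. Therefore $S\neq\NN$, which proves the claim under~\ref{c:bounded_Lipschitz}.

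For case~\ref{c:bounded_convex}, I would simply reduce to~\ref{c:bounded_Lipschitz}. Since $f$ is convex with open domain, it is locally Lipschitz on $\dom f$, and $\Pi(\Delta)\subseteq\dom f$ by \cref{l:ranT}\ref{l:ranT_incl}; moreover, if $x\in(\nabla f)^{-1}(0)$ then $0=\nabla f(x)\in\partial f(x)$, so $x$ minimizes $f$ and $f(x)=\inf f(X)<\min\{0,\sup f(\dom f)\}$, which is the first alternative of~\eqref{e:bounded}. Hence the hypotheses of~\ref{c:bounded_Lipschitz} hold, and $S$ is bounded. The main obstacle is the argument of the second paragraph: one must notice that once the first coordinate of the iterate is pinned at a critical point $a$ with $f(a)\neq 0$, the scalar coordinate $\rho_n$ is forced to drift monotonically to $+\infty$ or $-\infty$, and that the nearest-point characterization of $\gra f$ then produces the lower bound $(f(a)-f(y))(f(a)+f(y)+2\rho_n)$, which grows without bound --- something no fixed distance $\|y_0-a\|^2$ can sustain. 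Selecting the comparison point $y_0$ correctly is exactly what the strict inequalities in~\eqref{e:bounded} are designed to furnish.
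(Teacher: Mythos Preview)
Your proof is correct and follows essentially the same approach as the paper: both show $S$ is downward closed via \cref{l:DRstep}\ref{l:DRstep_i}, then assume $S=\NN$ to freeze $x_n\equiv a$, exploit the projector inequality $(f(a)-f(y))(f(a)+f(y)+2\rho_n)\leq\|y-a\|^2$ with $\rho_n\to\pm\infty$, and reduce case~\ref{c:bounded_convex} to~\ref{c:bounded_Lipschitz}. The only cosmetic difference is that the paper lets $y$ range over all of $\dom f$ to conclude $f(a)=\sup f(\dom f)$ (or $\inf f(X)$), whereas you pick a single witness $y_0$; both reach the same contradiction.
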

\begin{proof}
\ref{c:bounded_Lipschitz}: By way of a contraction, suppose that $S$ is unbounded. 
In this case, we claim that $S =\mathbb{N}$ and that the sequence $(x_n)_{\nnn}$ is constant. 
To see this, observe that if $n \in S$ (\emph{i.e.,} $f$ is strictly differentiable at $x_n$ with $\nabla f(x_n) =0$), 
then \cref{l:DRstep}\ref{l:DRstep_i} yields that $x_{n-1} =x_n$. 
In particular, $f$ is strictly differentiable at $x_{n-1}$ with $\nabla f(x_{n-1}) =0$. 
The claim now follows by descending induction on $n$.
	
Now, set $x :=x_0 =x_n$ for any $\nnn$. Let $y \in \dom f$. 
For all $\nnn$, since $(x_{n+1}, f(x_{n+1})) \in P_B(x_n, -\rho_n)$, the definition of $P_B$ implies
\begin{subequations}
\begin{align}
\|x_{n+1} -x_n\|^2 +|f(x_{n+1}) +\rho_n|^2 &\leq \|y -x_n\|^2 +|f(y) +\rho_n|^2 \\
\iff(f(x) -f(y))(f(x) +f(y) +2\rho_n) &\leq \|y -x\|^2. \label{e:fxfy}
\end{align}
\end{subequations}
Since $\nabla f(x)=0$, \eqref{e:bounded} implies that either $f(x)<0$ or $f(x)>0$. In the former case, \cref{l:DRstep} implies $\rho_{n+1}=\rho_0+nf(x)\to-\infty$ as $n\to\infty$, and hence $f(x) +f(y) +2\rho_n\to-\infty$. Since $\|y -x\|^2$ is fixed, \eqref{e:fxfy} implies that $f(x)-f(y)\geq0$. Since  $y \in \dom f$ was chosen arbitrarily, $f(x) =\sup f(\dom f)$, which contradicts the fact that $f(x) =\inf f(X) <\sup f(\dom f)$. The case in which $f(x)>0$ is proven analogously.

\ref{c:bounded_convex}: By assumption and \cite[Corollary~8.32]{BC11}, $f$ is locally Lipschitz continuous on $\dom f\supseteq \Pi(\Delta)$. By convexity of $f$, if $x\in (\nabla f)^{-1}(0)$, then $f(x) =\inf f(X) <\min\{0, \sup f(\dom f)\}$, hence \eqref{e:bounded} is satisfied. The result now follows from \ref{c:bounded_Lipschitz}.
\end{proof}

\begin{remark}
A convex function is strictly differentiable at every point where it is G\^ateaux differentiable. Indeed, supposing that a function $f$ is convex and G\^ateaux differentiable at $x \in \dom f$, it then follows, from \eqref{e:Fensubdiff} and \cite[Proposition~17.26(i)]{BC11}, that $\partial f(x) =\{\nabla f(x)\}$ is a singleton, 
and, from \cite[Proposition~17.39]{BC11}, that $f$ is lsc at $x$ and $x \in \intdom f$.
By combining with \cite[Proposition~8.12~\&~Theorem~9.18(a)~\&~(c)]{RW98}, $f$ is strictly differentiable at $x$. 
\end{remark}

The following result shows that, under a differentiation assumption, the inverse of the DR operator is continuous. This property, and its connection to stability, is explored further in Section~\ref{s:stability of DRA}.
\begin{corollary}
\label{c:inverseT}
Suppose that $f$ is strictly differentiable on an open set $U$ contained in $\Pi(\Delta)$. Then 
\begin{equation}
\label{e:inverseT}
(\forall (y, \sigma) \in \Pi^{-1}(U))\quad T_{A,B}^{-1}(y, \sigma) =(y +\sigma\nabla f(y),  \sigma -f(y)),
\end{equation}
and $T_{A,B}^{-1}$ is continuous on $\Pi^{-1}(U)$. Consequently, if the limit of a convergent DR sequence is contained in $\Pi^{-1}(U)$, then it is necessarily a fixed point $z$ of $T_{A,B}$ with $P_Az \in A\cap B$.
\end{corollary}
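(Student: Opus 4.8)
The idea is to read the inverse of $T_{A,B}$ directly off \cref{l:DRstep}: strict differentiability forces the selection $x_+^*\in\partial^0\! f(x_+)$ appearing there to equal $\nabla f(x_+)$, which removes all multivaluedness. Fix $(y,\sigma)\in\Pi^{-1}(U)$, so $y\in U$, and suppose $(x,\rho)\in T_{A,B}^{-1}(y,\sigma)$, i.e.\ $(y,\sigma)\in T_{A,B}(x,\rho)$ with $(x,\rho)\in\dom f\times\RR$ (the natural domain of the iteration). Since $f$ is strictly differentiable at $y\in U$ it is Lipschitz continuous around $y$ and $\partial^0\! f(y)=\{\nabla f(y)\}$; applying \cref{l:DRstep} with $(x_+,\rho_+)=(y,\sigma)$ gives $\sigma=\rho+f(y)$ and $y=x-\sigma\,x_+^*$ with $x_+^*\in\partial^0\! f(y)=\{\nabla f(y)\}$, hence $x_+^*=\nabla f(y)$. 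Rearranging yields $(x,\rho)=(y+\sigma\nabla f(y),\sigma-f(y))$, which is \eqref{e:inverseT}. (Alternatively, $(y,f(y))\in P_B(x,-\rho)$ makes $y$ a minimizer of $w\mapsto\|w-x\|^2+|f(w)+\rho|^2$, and Fermat's rule together with the sum, product, and scalar rules used in the proof of \cref{l:gra} gives the same conclusion, every branch collapsing because $f$ is strictly differentiable at $y$.)

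For the continuity claim, note that strict differentiability at every point of the open set $U$ makes $f$ of class $C^1$ on $U$ — a function that is locally Lipschitz and whose limiting subdifferential is a singleton throughout an open set has continuous gradient there — so the map $(y,\sigma)\mapsto(y+\sigma\nabla f(y),\sigma-f(y))$ is continuous on $\Pi^{-1}(U)=U\times\RR$, which is exactly the continuity of $T_{A,B}^{-1}$ on $\Pi^{-1}(U)$.

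For the final assertion, let $(z_n)_\nnn$ with $z_n=(x_n,\rho_n)$ be a DR sequence such that $z_n\to\bar z=(\bar x,\bar\rho)\in\Pi^{-1}(U)$, so $\bar x\in U$. Since $U$ is open, $x_n\in U$ for all large $n$; for such $n$ the inclusion $z_n\in T_{A,B}^{-1}(z_{n+1})$ together with \eqref{e:inverseT} yields $x_n=x_{n+1}+\rho_{n+1}\nabla f(x_{n+1})$ and $\rho_n=\rho_{n+1}-f(x_{n+1})$, and letting $n\to\infty$ (using continuity of $f$ and $\nabla f$ on $U$) gives $\bar\rho\,\nabla f(\bar x)=0$ and $f(\bar x)=0$; in particular $(\bar x,0)\in\gra f=B$. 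To see that $\bar z\in\Fix T_{A,B}$, recall from \cref{l:DRstep} that $(x_{n+1},f(x_{n+1}))\in P_B(x_n,-\rho_n)$, so for every $w\in\dom f$
\begin{equation}
\|x_{n+1}-x_n\|^2+|f(x_{n+1})+\rho_n|^2\leq\|w-x_n\|^2+|f(w)+\rho_n|^2;
\end{equation}
passing to the limit (with $x_{n+1}-x_n\to0$, $f(x_{n+1})\to f(\bar x)=0$, $\rho_n\to\bar\rho$) gives $|\bar\rho|^2\leq\|w-\bar x\|^2+|f(w)+\bar\rho|^2$ for all $w\in\dom f$, i.e.\ $(\bar x,0)=(\bar x,f(\bar x))\in P_B(\bar x,-\bar\rho)$. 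By \cref{l:ranT}\ref{l:ranT_eq}, $\bar z=(0,\bar\rho)+(\bar x,0)\in T_{A,B}(\bar z)$, so $\bar z\in\Fix T_{A,B}$, and \cref{l:FixT}\ref{l:FixT_basic} gives $P_A\bar z=(\bar x,0)\in A\cap B$.

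The substantive step is the first one — turning the inclusion handed over by \cref{l:DRstep} into the identity \eqref{e:inverseT} — and it rests solely on strict differentiability collapsing $\partial^0\! f$ to $\{\nabla f\}$; one should also keep in mind that $T_{A,B}^{-1}(y,\sigma)$ may be empty for $(y,\sigma)$ outside $\ran T_{A,B}$ (a point on the normal line to $\gra f$ at $(y,f(y))$ need not have $(y,f(y))$ as a \emph{nearest} point), so \eqref{e:inverseT} is read with the convention that the singleton is taken when the preimage is nonempty — harmless here, since every $z_n$ certifies that $T_{A,B}^{-1}(z_{n+1})\neq\varnothing$. The only other technical point is the passage from strict differentiability on $U$ to continuity of $\nabla f$ on $U$, invoked in both the continuity claim and the limiting argument.
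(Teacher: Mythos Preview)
Your argument is correct and, for \eqref{e:inverseT} and the continuity claim, essentially matches the paper's (collapse $\partial^0\! f$ to $\{\nabla f\}$ via strict differentiability, then use that $\nabla f$ is continuous on $U$). For the final assertion the paper takes a shorter route: since $z_{n-1}=T_{A,B}^{-1}(z_n)$ and $T_{A,B}^{-1}$ is continuous on $\Pi^{-1}(U)$, it simply computes $T_{A,B}^{-1}(\bar z)=\lim_n T_{A,B}^{-1}(z_n)=\lim_n z_{n-1}=\bar z$, concludes $\bar z\in T_{A,B}\bar z$, and invokes \cref{l:FixT}\ref{l:FixT_basic}. Your approach---first extracting $f(\bar x)=0$ and $\bar\rho\,\nabla f(\bar x)=0$ from the limit, and then passing the projection inequality for $P_B$ to the limit to verify $(\bar x,0)\in P_B(\bar x,-\bar\rho)$ directly---is longer but more careful: it does not presuppose that $T_{A,B}^{-1}(\bar z)$ is nonempty, a subtlety you rightly flag and which the paper's one-liner leaves implicit (it is tacitly relying on the closedness of $\gra T_{A,B}$).
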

\begin{proof}
Let $(y, \sigma) \in \Pi^{-1}(U)$. 
Then $y \in U$ and there exists $(x, \rho) \in \dom f \times \RR$ such that $(y, \sigma) \in T_{A,B}(x, \rho)$.
Since $f$ is strictly differentiable on $U$, it is Lipschitz continuous around $y$ with 
\begin{equation}
	\partial f(y) =\partial^+\! f(y) =\{\nabla f(y)\}.
\end{equation}
By \cref{l:DRstep}, $\sigma =\rho +f(y)$ and $y =x -\sigma\nabla f(y)$, which proves \eqref{e:inverseT}. 
To deduce the continuity of $T^{-1}_{A,B}$, observe that, since $f$ is strictly differentiable on $U$, $\nabla f$ is continuous on $U$ \cite[Corollary~9.19(a)--(b)]{RW98}.

Finally, let $(z_n)_\nnn$ be a DR sequence which converges to a point $z =(x, \rho) \in U$. 
Without loss of generally, we can and do assume that $z_n =(x_n, \rho_n) \in U$ for every $\nnn$. 
Then, using the continuity of $T^{-1}_{A,B}$ and the fact that $z_{n-1} =T^{-1}_{A,B}(z_n)$ gives
  \begin{equation} 
  	T^{-1}_{A,B}(z) = \lim_{n\to\infty} T^{-1}_{A,B}(z_n) = \lim_{n\to\infty}z_{n-1} = z,
  \end{equation}
which shows that $z\in T_{A,B}z$ and thus $z \in \Fix T_{A,B}$. 
In turn, applying \cref{l:FixT}\ref{l:FixT_basic} yields $P_Az \in P_A\Fix T_{A,B} =A\cap B$.
\end{proof}

\section{Stability and local convergence}
\label{s:stability of DRA}
In this section, we use an inverse function argument to give a condition under which the DRA algorithm is stable around fixed points in the sense of Lipschitz continuity. Again, we emphasize that alone such results do not guarantee convergence of the DRA. This question will be addressed in Section~\ref{s:lyapunov}. 

To begin, we recall two facts which will be of use: an inverse function theorem, and the so-called \emph{Sherman--Morrison formula}.
\begin{fact}[Single-valued Lipschitzian invertibility]
\label{f:inverse}
Let $T\colon \RR^m \to \RR^m$ be strictly differentiable at $\bar{x}$. If $\nabla T(\bar{x})$ is nonsingular, then $T^{-1}$ has a Lipschitz continuous single-valued localization, $S$, around $\bar{y} :=T\bar{x}$ for $\bar{x}$. 
Moreover, the Lipschitz modulus of $S$ at $\bar{y}$ is equal to $\|\nabla T(\bar{x})^{-1}\|$ 
and $S$ is strictly differentiable at $\bar{y}$ with $\nabla S(\bar{y}) =\nabla T(\bar{x})^{-1}$.
\end{fact}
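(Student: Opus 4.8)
The plan is to prove this as a quantitative inverse function theorem via a contraction argument (the Lyusternik--Graves scheme specialized to strictly differentiable maps). Write $A :=\nabla T(\bar x)$, which is invertible by hypothesis, and, translating domain and range if necessary, assume $\bar x =0$ and $\bar y =T(0) =0$. For $y$ near $0$ define $\Phi_y\colon\RR^m\to\RR^m$ by $\Phi_y(x) :=x -A^{-1}(T(x) -y)$; then $x$ is a fixed point of $\Phi_y$ precisely when $T(x) =y$, so solving $T(x)=y$ locally is the same as finding a fixed point of $\Phi_y$.

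First I would extract the core estimate from strict differentiability: for every $\varepsilon>0$ there is $\delta_\varepsilon>0$ with $\|T(x) -T(x') -A(x -x')\| \leq \varepsilon\|x -x'\|$ for all $x,x'\in\ball{0}{\delta_\varepsilon}$. Since $\Phi_y(x) -\Phi_y(x') =A^{-1}\big(A(x -x') -(T(x) -T(x'))\big)$, this gives $\|\Phi_y(x) -\Phi_y(x')\| \leq \varepsilon\|A^{-1}\|\,\|x -x'\|$. Fixing $\varepsilon :=1/(2\|A^{-1}\|)$ and $\delta :=\delta_\varepsilon$, the map $\Phi_y$ is a $\tfrac12$-contraction on $\ball{0}{\delta}$; moreover $\Phi_y(0) =A^{-1}y$, so $\|\Phi_y(x)\| \leq \tfrac12\|x\| +\|A^{-1}\|\,\|y\| \leq \delta$ whenever $\|x\|\leq\delta$ and $\|y\| \leq \delta/(2\|A^{-1}\|)$. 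Thus $\Phi_y$ maps the complete metric space $\ball{0}{\delta}$ into itself, and the Banach fixed point theorem supplies a unique fixed point $S(y)\in\ball{0}{\delta}$. This $S$ is a single-valued localization of $T^{-1}$ around $\bar y$ for $\bar x$, with $S(0) =0$.

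Next, using $\Phi_y(x) -\Phi_{y'}(x) =A^{-1}(y -y')$ together with the contraction estimate, $\|S(y) -S(y')\| \leq \tfrac12\|S(y) -S(y')\| +\|A^{-1}\|\,\|y -y'\|$, hence $\|S(y) -S(y')\| \leq 2\|A^{-1}\|\,\|y -y'\|$; in particular $S$ is continuous and $S(y)\to 0$ as $y\to 0$. To sharpen the local modulus at $\bar y$ to exactly $\|A^{-1}\|$, I would rerun the same computation with $\varepsilon$ arbitrarily small: once $y,y'$ are close enough to $0$ that $S(y),S(y')\in\ball{0}{\delta_\varepsilon}$, the estimate improves to $\|S(y) -S(y')\| \leq \frac{\|A^{-1}\|\,\|y -y'\|}{1 -\varepsilon\|A^{-1}\|}$, and letting $\varepsilon\downarrow 0$ shows the Lipschitz modulus of $S$ at $\bar y$ is at most $\|A^{-1}\|$; the matching lower bound will come for free from strict differentiability of $S$, since a map strictly differentiable at a point has local modulus there equal to the operator norm of its derivative.

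Finally, for strict differentiability of $S$ at $\bar y$ with $\nabla S(\bar y) =A^{-1}$, write $y -y' =T(S(y)) -T(S(y'))$ and feed this into the strict-differentiability estimate for $T$ on a ball $\ball{0}{\delta_\varepsilon}$ containing $S(y),S(y')$: then $y -y' =A(S(y) -S(y')) +r$ with $\|r\| \leq \varepsilon\|S(y) -S(y')\| \leq 2\varepsilon\|A^{-1}\|\,\|y -y'\|$, so $S(y) -S(y') -A^{-1}(y -y') =-A^{-1}r$ has norm $\leq 2\varepsilon\|A^{-1}\|^2\|y -y'\|$. Since $S(y),S(y')\to 0$ as $y,y'\to 0$, the factor $\varepsilon$ can be taken arbitrarily small near $0$, so the remainder is $o(\|y -y'\|)$ uniformly, which is exactly strict differentiability with derivative $A^{-1}$; undoing the translations then finishes the argument. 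I expect the main obstacle to be not any individual inequality but the bookkeeping of quantifiers linking the contraction radius $\delta_\varepsilon$, continuity of $S$, and the uniform $o(\cdot)$ estimate — and in particular obtaining the sharp modulus $\|A^{-1}\|$, which only becomes accessible after continuity of $S$ is established and requires coupling the fixed-point bound with $\varepsilon\downarrow 0$.
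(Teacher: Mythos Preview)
Your argument is correct: the Lyusternik--Graves contraction scheme you outline is the standard route to this inverse function theorem, and each step (the contraction estimate from strict differentiability, self-mapping of the ball, the Lipschitz bound for $S$, the sharpening to modulus exactly $\|A^{-1}\|$ via $\varepsilon\downarrow 0$, and the strict differentiability of $S$) is handled properly.

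The paper does not prove this statement at all; it simply records it as a \emph{Fact} and cites \cite[Corollary~9.55]{RW98} as a special case. Your proposal is thus a self-contained unpacking of that citation rather than a different approach in any substantive sense---the proof in \cite{RW98} proceeds by essentially the same contraction argument (via the more general Theorem~9.54 there). What your write-up buys is independence from the reference and explicit quantitative control, at the cost of length; what the paper's one-line citation buys is brevity, which is appropriate since the result is used only as a tool in \cref{t:stable} and is not the paper's focus.
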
	   
\begin{proof}
 This is a special case of \cite[Corollary~9.55]{RW98}.
\end{proof}
\begin{fact}[Sherman--Morrison formula]
\label{f:SMformula}
Let $M$ be a nonsingular square matrix and let $u$ and $v$ be column vectors of appropriate dimensions so that the following multiplication operators are well defined. Then the following assertions hold.
\begin{enumerate}
\item 
\label{f:SMformula_ori}
If $1 +v^\top M^{-1}u \neq 0$, then $M +uv^\top$ is nonsingular and
\begin{equation}
(M +uv^\top)^{-1} =M^{-1} -\frac{1}{1 +v^\top M^{-1}u} M^{-1}uv^\top M^{-1}.
\end{equation}
\item
\label{f:SMformula_inv}
If $M +uv^\top$ is singular, then $1 +v^\top M^{-1}u =0$.
\end{enumerate}
\end{fact}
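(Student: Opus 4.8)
The plan is to establish \ref{f:SMformula_ori} by directly verifying that the right-hand side of the displayed identity is a right inverse of $M + uv^\top$, and then to derive \ref{f:SMformula_inv} as its contrapositive.

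First I would introduce the scalar $\gamma := v^\top M^{-1} u$ (this is a $1 \times 1$ matrix, hence a number) and set $\beta := 1 + \gamma$, which is nonzero by the hypothesis of \ref{f:SMformula_ori}. The key computation is
\begin{equation}
(M + uv^\top)\left( M^{-1} - \frac{1}{\beta} M^{-1} u v^\top M^{-1} \right)
= I + uv^\top M^{-1} - \frac{1}{\beta} uv^\top M^{-1} - \frac{\gamma}{\beta} uv^\top M^{-1},
\end{equation}
where in the last term I have used that $v^\top M^{-1} u = \gamma$ commutes with the surrounding matrices because it is a scalar. Factoring out $uv^\top M^{-1}$ from the last three terms leaves the scalar coefficient $1 - \tfrac{1}{\beta} - \tfrac{\gamma}{\beta} = 1 - \tfrac{1+\gamma}{\beta} = 0$, so the product equals $I$. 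Since $M + uv^\top$ has the same (square) shape as $M$ and possesses a right inverse, it is nonsingular and that right inverse is its two-sided inverse; this yields both assertions of \ref{f:SMformula_ori}.

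For \ref{f:SMformula_inv}, I would simply note that it is the contrapositive of the nonsingularity part of \ref{f:SMformula_ori}: if $1 + v^\top M^{-1} u \neq 0$, then \ref{f:SMformula_ori} gives that $M + uv^\top$ is nonsingular, so if instead $M + uv^\top$ is singular then necessarily $1 + v^\top M^{-1} u = 0$.

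There is no genuine obstacle in this argument; it is a routine verification. The only points deserving a little care are recognizing that $v^\top M^{-1} u$ is a scalar (so that non-commutativity of matrix multiplication does not spoil the bookkeeping) and invoking the elementary fact that a square matrix over a field admitting a one-sided inverse is invertible. Alternatively, one could cite a standard reference for the Sherman--Morrison identity and still obtain \ref{f:SMformula_inv} by the contrapositive observation above.
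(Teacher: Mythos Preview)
Your proposal is correct. The paper's own proof simply cites \cite{SM50} for \ref{f:SMformula_ori} and notes that \ref{f:SMformula_inv} is the contrapositive of \ref{f:SMformula_ori}; you instead supply the direct verification of the identity and then take the same contrapositive, which is exactly the ``alternative'' you mention at the end of your write-up.
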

\begin{proof}
\ref{f:SMformula_ori}: See \cite{SM50}. \ref{f:SMformula_inv}: This is the contrapositive of  \ref{f:SMformula_ori}.
\end{proof}	

We are ready to prove our main result regarding stability of the DRA. In the following,  $\succeq$ denotes the \emph{L\"owner partial order} on the space of symmetric matrices.
We say that $f$ is \emph{twice strictly differentiable} at $\bar{x}$ if $f$ is differentiable around $\bar{x}$ and $\nabla f$ is strictly differentiable at $\bar{x}$.

\begin{theorem}[Stability of the DRA]
\label{t:stable}
Let $\bar{z} :=(\bar{x},\bar{\rho}) \in \Fix T_{A,B}$, and suppose that $f$ is twice strictly differentiable at $\bar{x}$ and that $\bar{\rho}\nabla^2f(\bar{x})\succeq0$. 
Then $(T_{A,B}^{-1})^{-1}$ has a Lipschitz continuous single-valued localization, $S$, around $\bar{z}$ for $\bar{z}$ which is strictly differentiable at $\bar{z}$ and has Lipschitz modulus at $\bar{z}$ equal to $\ell \leq 1$ where
\begin{equation}
\label{e:Lmodulus}
\ell=\|\nabla S(\bar{z})\| =\begin{cases}
1 &\text{~if~} \dim X >1, \\ 
\frac{1}{\sqrt{1 +|f'(\bar{x})|^2}} &\text{~if~} \dim X =1. 
\end{cases}
\end{equation}
Furthermore, if $\bar{z}=(\bar{x},0)\in A\cap B\subseteq\Fix T_{A,B}$, then $S$ and $T_{A,B}$ coincide on a neighborhood of $\bar{z}$.
\end{theorem}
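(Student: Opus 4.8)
The plan is to reduce the statement to the inverse function theorem (\cref{f:inverse}) applied to the explicit expression for $T_{A,B}^{-1}$ supplied by \cref{c:inverseT}. First I would check that \cref{c:inverseT} is applicable near $\bar{x}$: since $\bar{z}=(\bar{x},\bar{\rho})\in\Fix T_{A,B}\subseteq\ran T_{A,B}\subseteq\dom f\times\RR$ by \cref{l:ranT}\ref{l:ranT_incl}, and $f$ is in particular differentiable on a neighbourhood of $\bar{x}$, we have $\bar{x}\in\intdom f$; moreover $\dom f\subseteq\Pi(\Delta)$, because for any $p\in\dom f$ the point $(p,f(p))\in B=\gra f$ is trivially its own nearest point in $B$, so $(p,0)\in T_{A,B}(p,-f(p))$ by \cref{l:ranT}\ref{l:ranT_eq}. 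Hence $\bar{x}$ belongs to an open subset $U$ of $\Pi(\Delta)$ on which $f$ is strictly differentiable, and \cref{c:inverseT} shows that on the open neighbourhood $\Pi^{-1}(U)$ of $\bar{z}$ the map $T_{A,B}^{-1}$ is single-valued and equals $G\colon(y,\sigma)\mapsto(y+\sigma\nabla f(y),\,\sigma-f(y))$; equivalently, $\gra\big((T_{A,B}^{-1})^{-1}\big)$ and $\gra(G^{-1})$ agree in a neighbourhood of $(\bar{z},\bar{z})$.

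Next I would show that $G$ is strictly differentiable at $\bar{z}$ and compute $\nabla G(\bar{z})$. Since $f$ is twice strictly differentiable at $\bar{x}$, the gradient $\nabla f$ is strictly differentiable at $\bar{x}$ and $f$ is $C^1$ (hence strictly differentiable) on a neighbourhood of $\bar{x}$; the sum, chain, and bilinear-product rules for strict differentiability then give that $G$ is strictly differentiable at $\bar{z}$ with
\begin{equation*}
\nabla G(\bar{z})=\begin{pmatrix} I+\bar{\rho}\,\nabla^2 f(\bar{x}) & \nabla f(\bar{x})\\[2pt] -\nabla f(\bar{x})^\top & 1\end{pmatrix}
\end{equation*}
relative to the decomposition $X\times\RR$. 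Applying \cref{l:DRstep} to the fixed point $\bar{z}$ (with $f$ Lipschitz around $\bar{x}$) yields $f(\bar{x})=0$ and $\bar{\rho}\nabla f(\bar{x})=0$; in particular $G(\bar{z})=\bar{z}$.

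Write $M:=I+\bar{\rho}\nabla^2 f(\bar{x})$, $u:=\nabla f(\bar{x})$ and $N:=\nabla G(\bar{z})$. By hypothesis $\bar{\rho}\nabla^2 f(\bar{x})\succeq0$, so $M\succeq I$, and since $\bar{\rho}u=0$ we have $Mu=u$ and $(M-I)u=0$; a short computation then gives $N^\top N=\operatorname{diag}\!\big(M^2+uu^\top,\;1+\|u\|^2\big)\succeq I$, so $N$ is nonsingular and $\ell:=\|N^{-1}\|=\lambda_{\min}(N^\top N)^{-1/2}\le1$. To evaluate $\ell$ I would split cases: if $u=0$ then $\lambda_{\min}(N^\top N)=\min\{\lambda_{\min}(M^2),1\}=1$; if $u\ne0$ then necessarily $\bar{\rho}=0$ (as $\bar{\rho}u=0$), hence $M=I$ and $M^2+uu^\top=I+uu^\top$ has eigenvalue $1$ with multiplicity $\dim X-1$ and eigenvalue $1+\|u\|^2$, giving $\lambda_{\min}(N^\top N)=1$ when $\dim X>1$ and $\lambda_{\min}(N^\top N)=1+\|u\|^2=1+|f'(\bar{x})|^2$ when $\dim X=1$ (and when $\dim X=1$ with $\bar{\rho}\ne0$ one has $u=f'(\bar{x})=0$, so the formula still matches). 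This establishes \eqref{e:Lmodulus}. Now \cref{f:inverse} applied to $T:=G$ produces a Lipschitz continuous single-valued localization $S$ of $G^{-1}$ around $G(\bar{z})=\bar{z}$ for $\bar{z}$, strictly differentiable at $\bar{z}$ with $\nabla S(\bar{z})=N^{-1}$ and Lipschitz modulus $\ell$; since $\gra(G^{-1})$ coincides with $\gra\big((T_{A,B}^{-1})^{-1}\big)$ near $(\bar{z},\bar{z})$, $S$ is the desired localization of $(T_{A,B}^{-1})^{-1}$. I expect the norm computation to be the main obstacle: the block-diagonal form of $N^\top N$ rests on the identity $\bar{\rho}\nabla f(\bar{x})=0$ extracted from the fixed-point equation, and the $\dim X=1$ versus $\dim X>1$ (and $u=0$ versus $u\ne0$) cases must be organized carefully to land exactly on \eqref{e:Lmodulus} and on $\ell\le1$.

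For the last assertion, suppose $\bar{z}=(\bar{x},0)\in A\cap B$. As set-valued mappings $(T_{A,B}^{-1})^{-1}=T_{A,B}$, so $S$ is a localization of $T_{A,B}$: there are neighbourhoods $U_1,U_2$ of $\bar{z}$ with $\gra T_{A,B}\cap(U_1\times U_2)=\gra S$. Since $A=X\times\{0\}$ is a closed subspace (hence convex) and $\bar{z}\in A\cap B$, \cref{l:1cvx}\ref{l:1cvx_stable} applied to the pair $(A,B)$ gives $T_{A,B}(\ball{\bar{z}}{\delta})\subseteq\ball{\bar{z}}{2\delta}$ for every $\delta>0$. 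Choosing $\delta$ small enough that $\ball{\bar{z}}{\delta}\subseteq U_1$ and $\ball{\bar{z}}{2\delta}\subseteq U_2$, we conclude that for every $z\in\ball{\bar{z}}{\delta}$ the whole set $T_{A,B}(z)$ lies in $U_2$ and hence equals $\{S(z)\}$; thus $T_{A,B}$ and $S$ coincide on $\ball{\bar{z}}{\delta}$. (A secondary technical point, already needed above, is deducing strict differentiability of $G$ at $\bar{z}$ from the weaker ``twice strictly differentiable'' hypothesis rather than a $C^2$ assumption, which forces a pointwise argument via the calculus of strict differentiability.)
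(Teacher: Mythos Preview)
Your proposal is correct and follows the same overall architecture as the paper: invoke \cref{c:inverseT} to obtain the explicit formula for $T_{A,B}^{-1}$ near $\bar z$, compute its Jacobian, show it is nonsingular with $\|(\nabla T_{A,B}^{-1}(\bar z))^{-1}\|\le1$, apply \cref{f:inverse}, and then use \cref{l:1cvx}\ref{l:1cvx_stable} together with the graph-inverse identity to upgrade the localization $S$ to $T_{A,B}$ itself when $\bar z\in A\cap B$.

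The one genuine difference is in how the operator norm is computed. The paper bifurcates on $\bar\rho=0$ versus $\bar\rho\neq0$: in the first case it invokes the Sherman--Morrison formula (\cref{f:SMformula}) to invert $\Id+\nabla f(\bar x)\nabla f(\bar x)^\top$ and then extracts eigenvalues; in the second case it appeals to \cref{l:FixT}\ref{l:FixT_Lipschitz} to obtain $\nabla f(\bar x)=0$, making the Jacobian block-diagonal. You instead extract the single identity $\bar\rho\,\nabla f(\bar x)=0$ directly from the fixed-point equation via \cref{l:DRstep}, which yields $Mu=u$ (with $M=\Id+\bar\rho\nabla^2 f(\bar x)$ and $u=\nabla f(\bar x)$) in \emph{all} cases, and hence the unified block-diagonal form $N^\top N=\operatorname{diag}(M^2+uu^\top,\,1+\|u\|^2)$. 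This lets you read off the eigenvalues directly---no Sherman--Morrison needed---and your subsequent case split ($u=0$ versus $u\neq0$) is then purely about bookkeeping for \eqref{e:Lmodulus}. Your route is more economical; the paper's route has the minor advantage of making the role of the hypothesis $\bar\rho\nabla^2 f(\bar x)\succeq0$ visible only where it is actually needed (the $\bar\rho\neq0$ branch), whereas in your argument it is used upfront to get $M\succeq\Id$.
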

\begin{proof}
Since $f$ is twice strictly differentiable at $\bar{x}$, $\nabla f$ both exists and is Lipschitz continuous around $\bar{x}$. 
In particular, $f$ is continuous differentiable around $\bar{x}$ and, consequently, strictly differentiable around $\bar{x}$. 
Therefore, for every $(x_+, \rho_+) \in X\times \RR$ with $x^+$ sufficiently close to $\bar{x}$, \cref{c:inverseT} gives that
\begin{equation}
T^{-1}_{A,B}\begin{bmatrix} x_+\\ \rho_+ \\ \end{bmatrix}
= \begin{bmatrix} x_+ +\rho_+\nabla f(x_+)\\ \rho_+-f(x_+) \\ \end{bmatrix} 
\end{equation}
and, since $\nabla f$ is strictly differentiable at $\bar{x}$, $T^{-1}_{A,B}$ is strictly differentiable at $\bar{z}$ with Jacobian given by
\begin{equation}
\label{eq:T-1 jacobian}
\nabla T^{-1}_{A,B}(\bar{z}) =\nabla T^{-1}_{A,B}\begin{bmatrix} \bar{x} \\ \bar{\rho} \\ \end{bmatrix}
=\begin{bmatrix} \Id+\bar{\rho}\nabla^2f(\bar{x})  & \nabla f(\bar{x}) \\ -\nabla f(\bar{x})^\top & 1 \\ \end{bmatrix}.
\end{equation}
Now, by distinguishing two cases, we show that $\nabla T^{-1}(\bar{z})$ is nonsingular and
\begin{equation}
\label{e:Lmodulus'}
\|(\nabla T^{-1}_{A,B}(\bar z))^{-1}\| =\begin{cases}
1 &\text{~if~} \dim X >1, \\ 
\frac{1}{\sqrt{1 +|f'(\bar x)|^2}} &\text{~if~} \dim X =1. 
\end{cases}
\end{equation}

\emph{Case 1:} Assume $\bar\rho =0$. 
Then \eqref{eq:T-1 jacobian} becomes
\begin{equation}
\nabla T^{-1}_{A,B}(\bar z) =\begin{bmatrix} \Id & \nabla f(\bar x) \\ -\nabla f(\bar x)^\top & 1 \\ \end{bmatrix}
\end{equation}
and hence $\det \nabla T^{-1}_{A,B}(\bar{z}) =\det(\Id + \nabla f(\bar{x})\nabla f(\bar{x})^\top)$. 
Noting that
\begin{equation}
1 +\nabla f(\bar{x})^\top \Id \nabla f(\bar{x})  =1 +\|\nabla f(\bar{x})\|^2 \neq 0,
\end{equation}
it follows from \cref{f:SMformula}\ref{f:SMformula_ori} that $\Id + \nabla f(\bar{x})\nabla f(\bar{x})^\top$ is nonsingular and that
\begin{equation}
\label{e:invmatrix}
\left(\Id + \nabla f(\bar{x})\nabla f(\bar{x})^\top\right)^{-1} =\Id -\alpha\nabla f(\bar{x})\nabla f(\bar{x})^\top \text{ where }\alpha :=(1+\|\nabla f(\bar{x})\|^2)^{-1}.
\end{equation}
Therefore, $\det \nabla T^{-1}_{A,B}(\bar{z}) =\det(\Id + \nabla f(\bar{x})\nabla f(\bar{x})^\top) \neq 0$ and
hence, in particular, $\nabla T^{-1}_{A,B}(\bar{z})$ is nonsingular. 
To estimate $\|M\|$ where $M :=(\nabla T^{-1}_{A,B}(\bar{z}))^{-1}$, recall that 
$\|M\| =\sqrt{\lambda_{\max}(M^\top M)},$
where $\lambda_{\max}$ denotes the largest eigenvalue.
Using block matrix inversion and \eqref{e:invmatrix} gives 
\begin{subequations}
\begin{align}
M^\top M &=\big(\nabla T^{-1}_{A,B}(\bar{z})^\top\big)^{-1} (\nabla T^{-1}_{A,B}(\bar{z}))^{-1} 
=\left(\nabla T^{-1}_{A,B}(\bar{z})\nabla T^{-1}_{A,B}(\bar{z})^\top\right)^{-1} \\
&= \begin{bmatrix} \Id +\nabla f(\bar{x})\nabla f(\bar{x})^\top  & 0 \\ 0 & 1 +\|\nabla f(\bar{x})\|^2 \end{bmatrix}^{-1} \\
&=\begin{bmatrix} (\Id+\nabla f(\bar{x})\nabla f(\bar{x})^\top)^{-1}  & 0 \\ 0 & (1 +\|\nabla f(\bar{x})\|^2)^{-1} \end{bmatrix} \\
&=\begin{bmatrix} \Id -\alpha\nabla f(\bar{x})\nabla f(\bar{x})^\top & 0 \\ 0 & \alpha \end{bmatrix}
\end{align}
\end{subequations}
and so 
\begin{equation}
\lambda_{\max}(M^\top M) =\max\left\{\lambda_{\max}\big(\Id -\alpha\nabla f(\bar{x})\nabla f(\bar{x})^\top\big), \alpha\right\}.
\end{equation}
Let $\lambda$ be an eigenvalue of $\Id -\alpha\nabla f(\bar{x})\nabla f(\bar{x})^\top$, that is,
\begin{equation}
\label{e:det}
\det\left((1 -\lambda)\Id -\alpha\nabla f(\bar{x})\nabla f(\bar{x})^\top\right) =0.
\end{equation} 
If $\lambda =1$, then we must have $\det(-\alpha\nabla f(\bar{x})\nabla f(\bar{x})^\top) =0$, 
which occurs if and only if $\dim X >1$ or $\nabla f(\bar{x}) =0$.
Otherwise, using \eqref{e:det} and \cref{f:SMformula}\ref{f:SMformula_inv} yields
\begin{equation}
1 +\nabla f(\bar{x})^\top\frac{1}{1 -\lambda}\Id(-\alpha\nabla f(\bar{x})) =0
\implies 1 -\frac{1}{1 -\lambda}\alpha\left(\frac{1}{\alpha} -1\right) =0 
\implies\lambda=\alpha.
\end{equation}
Hence, either $\lambda =1$ or $\lambda =\alpha$. In either case, 
\begin{equation}
\lambda_{\max}(M^\top M) =\begin{cases}
1 &\text{~if~} \dim X >1 \text{~or~} \nabla f(\bar{x}) =0, \\ 
\alpha &\text{~otherwise},
\end{cases}
\end{equation}
and, by noting that $\alpha \leq 1$ with equality if and only if $\nabla f(\bar{x}) =0$, we deduce that
\begin{equation}
\label{eq:dimesion bound}
\|(\nabla T^{-1}_{A,B}(\bar{z}))^{-1}\| =\|M\| =\sqrt{\lambda_{\max}(M^\top M)} 
=\begin{cases}
1 &\text{~if~} \dim X >1, \\ 
\sqrt{\alpha} &\text{~if~} \dim X =1.
\end{cases}
\end{equation}

\emph{Case 2:} Assume $\bar\rho \neq 0$. 
Then $\bar z \in \Fix T_{A,B}\smallsetminus (A\cap B)$ and, by \cref{l:FixT}\ref{l:FixT_Lipschitz}, 
$\bar x \in f^{-1}(0)\cap (\nabla f)^{-1}(0)$ (\emph{i.e.,}~$f(\bar x) =0$ and $\nabla f(\bar x) =0$).
In turn, \eqref{eq:T-1 jacobian} becomes
\begin{equation}
\nabla T^{-1}_{A,B}(\bar z) =\begin{bmatrix} \Id +\bar\rho\nabla^2 f(\bar x) & 0 \\ 0 & 1 \\ \end{bmatrix},
\end{equation}
and, since  $\bar\rho\nabla f^2(\bar x)\succeq 0$ by assumption, we have $\Id +\bar\rho\nabla f^2(\bar x) \succeq \Id$ so that
\begin{equation}
\label{e:eigenvalue}
\lambda_{\min}\big(\Id +\bar\rho\nabla f^2(\bar x)\big) \geq 1 >0,
\end{equation}
where $\lambda_{\min}$ denotes the smallest eigenvalue.
We therefore have that both $\Id +\bar\rho\nabla f^2(\bar x)$ and $\nabla T^{-1}_{A,B}(\bar z)$ are nonsingular and, moreover, that

\begin{equation}
(\nabla T^{-1}_{A,B}(\bar z))^{-1} =\begin{bmatrix} (\Id +\bar\rho\nabla^2 f(\bar x))^{-1} & 0 \\ 0 & 1 \\ \end{bmatrix}.
\end{equation}
Using \eqref{e:eigenvalue} yields $0 < \lambda \leq 1$ for every eigenvalue $\lambda$ of $(\Id +\bar\rho\nabla^2 f(\bar x))^{-1}$, and as the matrix is symmetric, we have
\begin{equation}
\|(\nabla T^{-1}_{A,B}(\bar z))^{-1}\| =\max\left\{\big| \lambda_{\max}\big((\Id +\bar\rho\nabla^2 f(\bar x))^{-1}\big) \big|, 1\right\} =1.
\end{equation}
Noting that $\nabla f(\bar x) =0$, we see that this completes the proof of \eqref{e:Lmodulus'}.

In either of the above cases, we have that $\nabla T^{-1}_{A,B}$ is nonsingular at $\bar{z}$ and that $\|(\nabla T^{-1}_{A,B}(\bar{z}))^{-1}\|$ satisfies \eqref{e:Lmodulus'}. 
Now, as $\bar{z} \in T_{A,B}\bar{z}$ and $T_{A,B}^{-1}$ is single-valued at $\bar{z}$, it follows that $\bar{z}=T^{-1}_{A,B}\bar{z}$. 
By applying \cref{f:inverse}, we deduce that $(T^{-1}_{A,B})^{-1}$ has a Lipschitz continuous single-valued localization, $S$, around $\bar{z}$ for $\bar{z}$ which is strictly differentiable at $\bar{z}$, and which has Lipschitz modulus at $\bar{z}$ equal to $\|\nabla S(\bar{z})\| =\|(\nabla T^{-1}_{A,B}(\bar{z}))^{-1}\|\leq 1$.

Further assume that $\bar{z}=(\bar{x},0)\in A\cap B$. We shall show that $S$ coincides with $T_{A,B}$ around $\bar{z}$. First we note that since $S$ is a localization at $\bar{z}$ for $\bar{z}$, by definition, there exist neighborhoods $U$ and $V$ of $\bar{z}$ such that
\begin{equation}\label{eq:local}
(\forall z\in U)\quad (T_{A,B}^{-1})^{-1}(z)\cap V =Sz.
\end{equation}
Now set $\delta>0$ such that $\ball{\bar{z}}{\delta}\subseteq U$ and $\ball{\bar{z}}{2\delta}\subseteq V$. Applying \cref{l:1cvx}\ref{l:1cvx_stable} gives
\begin{equation}\label{eq:2expan}
(\forall z\in \ball{\bar{z}}{\delta})\quad T_{A,B}z \subseteq \ball{\bar{z}}{2\delta} \subseteq V.
\end{equation}
As $T_{A,B}\subseteq (T_{A,B}^{-1})^{-1}$, combining \eqref{eq:local} with \eqref{eq:2expan} gives that
\begin{equation}
(\forall z\in \ball{\bar{z}}{\delta})\quad T_{A,B}z =T_{A,B}z\cap V\subseteq (T_{A,B}^{-1})^{-1}(z)\cap V =Sz,
\end{equation}
and since $T_{A,B}z \neq \varnothing$ and $Sz$ is a singleton, the above inclusion must be an equality.
This yields $T_{A,B}=S$ on $\ball{\bar{z}}{\delta}$, as was claimed. 
We therefore deduce that $T_{A,B}$ is single-valued and locally Lipschitz on $\ball{\bar{z}}{\delta}$ with modulus at $\bar{z}$ equal to $\ell :=\|\nabla T_{A,B}(\bar{z})\| =\|\nabla S(\bar{z})\| =\|(\nabla T^{-1}_{A,B}(\bar{z}))^{-1}\| \leq 1$ satisfying \eqref{eq:dimesion bound}. This completes the proof.
\end{proof}

A closer inspection of the proof of \cref{t:stable} shows that it actually proves \emph{$Q$-linear} convergence of the DRA in a special case. Recall that a sequence $(z_n)_\nnn$ is said to converge \emph{$Q$-linearly} to $\bar{z}$ with rate $\kappa \in \left[0, 1\right[$ if 
\begin{equation}
\limsup_{n\to\infty} \frac{\|z_{n+1} -\bar{z}\|}{\|z_n -\bar{z}\|} \leq \kappa.
\end{equation}	

\begin{corollary}[Local $Q$-linear convergence of the DRA]
\label{c:Qlinear}
Let $\bar{z} :=(\bar{x}, 0) \in A\cap B$, and suppose that $X =\RR$ and that $f$ is twice strictly differentiable at $\bar{x}$ with $f'(\bar{x}) \neq 0$.
Then there exists $\delta \in \RPP$ such that $T_{A,B}$ is a single-valued contraction mapping on $\ball{\bar{z}}{\delta}$ with $T_{A,B}(\ball{\bar{z}}{\delta}) \subset \ball{\bar{z}}{\delta}$.   
Furthermore, for any starting point $z_0 \in \ball{\bar{z}}{\delta}$, the DR sequence $(z_n)_\nnn$ converges $Q$-linearly to $\bar{z}$ with  rate 
\begin{equation}
\kappa :=\frac{1}{\sqrt{1 +|f'(\bar{x})|^2}}.
\end{equation}
\end{corollary}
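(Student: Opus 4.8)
The plan is to reduce everything to \cref{t:stable}. Since $\bar z=(\bar x,0)\in A\cap B\subseteq\Fix T_{A,B}$ (by \cref{l:FixT}\ref{l:FixT_basic}) we have $\bar\rho=0$, so the hypothesis $\bar\rho\nabla^2 f(\bar x)\succeq 0$ holds automatically, and $f$ is twice strictly differentiable at $\bar x$ by assumption. Hence \cref{t:stable} applies. Because $\dim X=1$ and $f'(\bar x)\neq 0$, its conclusion furnishes a neighborhood of $\bar z$ on which $T_{A,B}$ coincides with the single-valued localization $S$, is single-valued and strictly differentiable at $\bar z$, and satisfies $\|\nabla T_{A,B}(\bar z)\|=\kappa$ with $\kappa=\tfrac{1}{\sqrt{1+|f'(\bar x)|^2}}\in\left[0,1\right[$; crucially $\kappa<1$ precisely because $f'(\bar x)\neq 0$.

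Next I would promote this to a genuine self-contraction on a ball. Fix any $\kappa'\in\left]\kappa,1\right[$. Strict differentiability of $T_{A,B}$ at $\bar z$ means $T_{A,B}z-T_{A,B}z'=\nabla T_{A,B}(\bar z)(z-z')+o(\|z-z'\|)$ uniformly for $z,z'$ near $\bar z$; combined with $\|\nabla T_{A,B}(\bar z)\|=\kappa$, this yields $\delta\in\RPP$ such that $T_{A,B}$ is single-valued on $\ball{\bar z}{\delta}$ and $\|T_{A,B}z-T_{A,B}z'\|\leq\kappa'\|z-z'\|$ for all $z,z'\in\ball{\bar z}{\delta}$. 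Taking $z'=\bar z$, which is a fixed point, gives $\|T_{A,B}z-\bar z\|\leq\kappa'\|z-\bar z\|\leq\kappa'\delta<\delta$, so $T_{A,B}(\ball{\bar z}{\delta})\subset\ball{\bar z}{\delta}$ and $T_{A,B}$ is a contraction on this ball, as claimed in the first assertion.

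For the second assertion, let $z_0\in\ball{\bar z}{\delta}$. An induction using the self-map property shows $z_n\in\ball{\bar z}{\delta}$ for every $\nnn$, and the contraction estimate gives $\|z_n-\bar z\|\leq(\kappa')^n\|z_0-\bar z\|\to 0$, so $z_n\to\bar z$. To obtain the sharp rate $\kappa$ (rather than the auxiliary $\kappa'$, which was only needed to get a self-map), I would invoke strict differentiability at $\bar z$ once more: if $z_n=\bar z$ for some $n$ then the sequence is eventually constant and the $\limsup$ in the definition of $Q$-linear convergence is vacuously $\leq\kappa$; otherwise $z_n\neq\bar z$ for all $n$, and from $z_{n+1}-\bar z=T_{A,B}z_n-T_{A,B}\bar z=\nabla T_{A,B}(\bar z)(z_n-\bar z)+o(\|z_n-\bar z\|)$ we get $\|z_{n+1}-\bar z\|\leq(\kappa+\varepsilon_n)\|z_n-\bar z\|$ with $\varepsilon_n\downarrow 0$ since $z_n\to\bar z$, hence $\limsup_{n\to\infty}\|z_{n+1}-\bar z\|/\|z_n-\bar z\|\leq\kappa$.

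Given \cref{t:stable}, no step is a serious obstacle; the only points needing care are: recording that $\kappa<1$ (which is exactly where the hypothesis $f'(\bar x)\neq 0$ is used), phrasing the $\limsup$ so as to accommodate the degenerate case $z_n=\bar z$, and applying the strict-differentiability $o(\cdot)$ estimate with base point $\bar z$ itself so that the conclusion carries the optimal constant $\kappa$ rather than $\kappa'$.
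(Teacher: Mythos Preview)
Your proposal is correct and follows essentially the same route as the paper: invoke \cref{t:stable} (with $\bar\rho=0$ so the semidefiniteness hypothesis is trivial) to obtain single-valuedness and $\|\nabla T_{A,B}(\bar z)\|=\kappa<1$, pick $\kappa'\in\left]\kappa,1\right[$ and shrink $\delta$ to get a $\kappa'$-contraction that maps $\ball{\bar z}{\delta}$ into itself, and then recover the sharp rate $\kappa$ from the strict-differentiability/Lipschitz-modulus estimate once $z_n\to\bar z$. The only cosmetic difference is that the paper phrases the last step via the Lipschitz-modulus $\limsup$ \eqref{e:modulus} rather than the equivalent $o(\cdot)$ form of strict differentiability, and does not separately discuss the degenerate case $z_n=\bar z$.
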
	
\begin{proof}
By applying \cref{t:stable} to $X =\RR$, there exists $\delta \in \RPP$ such that $T_{A,B}$ is single-valued and locally Lipschitz continuous on $\ball{\bar{z}}{\delta}$ with modulus at $\bar{z}$ equal to $\kappa :=\|\nabla T_{A,B}(\bar{z})\| =1/\sqrt{1 +|f'(\bar{x})|^2} <1$. From the definition of the Lipschitz modulus at $\bar{z}$, we have
\begin{equation}
\label{e:modulus}
\limsup_{z \to \bar{z},\ z' \to \bar{z}} \frac{\|T_{A,B}z -T_{A,B}z'\|}{\|z -z'\|} \leq \kappa <1.
\end{equation}
Let $\kappa' \in \left]\kappa, 1\right[$. 
Then, by shrinking $\delta$ if necessary, we have
\begin{equation}
\label{e:contraction}
(\forall z \in \ball{\bar{z}}{\delta})(\forall z' \in \ball{\bar{z}}{\delta})\quad \|T_{A,B}z -T_{A,B}z'\| \leq \kappa'\|z -z'\|,
\end{equation}
and hence $T_{A,B}$ is a (single-valued) contraction mapping on $\ball{\bar{z}}{\delta}$.
Substituting $z' =\bar{z}$ and noting that $T_{A,B}\bar{z} =\bar{z}$ yield 
\begin{equation}
(\forall z \in \ball{\bar{z}}{\delta})\quad \|T_{A,B}z -\bar{z}\| \leq \kappa'\|z -\bar{z}\|,
\end{equation}
which implies that $T_{A,B}(\ball{\bar{z}}{\delta}) \subseteq \ball{\bar{z}}{\kappa'\delta}\subset \ball{\bar{z}}{\delta}$ 
and that the DRA sequence $(z_n)_{\nnn}$ converges to $\bar{z}$ whenever $z_0 \in \ball{\bar{z}}{\delta}$. 
Now since $z_n\to\bar{z}$, the claimed $Q$-linear rate follows from \eqref{e:modulus}.  
\end{proof}

\begin{remark}
Let $\bar z :=(\bar{x}, \bar{\rho}) \in \Fix T_{A,B}$ and suppose that $f$ is twice strictly differentiable at $\bar{x}$. 
By \cref{l:FixT}\ref{l:FixT_basic}, $(\bar{x}, f(\bar{x})) =(\bar{x}, 0) \in P_B(\bar{x}, -\bar{\rho})$ and so
\begin{equation}\label{eq:argmin fixed point}
\bar{x} \in \argmin_{y\in X} \tfrac{1}{2}\big(\|y -\bar{x}\|^2 +|f(y)+\bar{\rho}|^2\big).
\end{equation}
Differentiating the objective function twice gives
\begin{equation}
\Id +\nabla^2f(y)(f(y)+\bar{\rho}) +\nabla f(y)\nabla f(y)^\top.
\end{equation}
If $\bar{\rho} \neq 0$, then since $f(\bar{x}) =0$ and $\nabla f(\bar{x}) =0$ (\cref{l:FixT}\ref{l:FixT_Lipschitz}), 
the second order optimality condition yields
\begin{equation}\label{eq:second order opt}
\Id + \bar{\rho}\nabla^2f(\bar{x}) \succeq 0.
\end{equation}
Let us compare \eqref{eq:second order opt} to the assumption in \cref{t:stable}. 
The latter assumed that $\bar{\rho}\nabla^2f(\bar{x}) \succeq 0$ which is equivalent to
 \begin{equation}\label{eq:assumption t:stable}
 	 \Id + \bar{\rho}\nabla^2f(\bar{x}) \succeq \Id;
 \end{equation}
a condition which is stronger than \eqref{eq:second order opt}. 
Nevertheless, \eqref{eq:assumption t:stable} holds as soon as one of the following holds:
(i) $\bar{\rho} =0$, (ii) $\bar{\rho} \geq 0$ and $f$ is convex, or (iii) $\bar{\rho} \leq 0$ and $f$ is concave. 
In fact, when $\bar{\rho}\nabla^2f(\bar{x}) \succeq 0$ fails, unstable fixed points can arise 
as is the case in the following example.
\end{remark}

\begin{example}[An unstable fixed point]
\label{ex:unstable fixed point}
Consider $X =\RR$ and the function $f =\frac{1}{2}|\cdot|^2$. 
Appealing to Theorem~\ref{t:stable}, we deduce that $T_{A,B}$ is single-valued and locally Lipschitz around the point $(0,0)\in A\cap B$. However, $T_{A,B}$ is not locally Lipschitz around the point $\bar{z} =(0, -\frac{1}{2}) \in \Fix T_{A,B}\smallsetminus (A\cap B)$. To see this, let $\varepsilon \geq 0$ and consider the point $z_\varepsilon =(-\varepsilon, -\frac{1}{2})$. 
We have from \cref{l:ranT}\ref{l:ranT_eq} that
\begin{equation}
\label{e:Tze}
T_{A,B}z_\varepsilon =(0, -\tfrac{1}{2}) +P_B(-\varepsilon, \tfrac{1}{2}).
\end{equation}
Let $(y, f(y)) \in P_B(-\varepsilon, \frac{1}{2})$. Then \eqref{e:proj_gra} implies that
  \begin{equation}\label{eq:unstable fixed point}
    -\varepsilon =y+\left(f(y) -\tfrac{1}{2}\right) f'(y) =y +\left(\tfrac{1}{2}y^2 -\tfrac{1}{2}\right)y =\tfrac{1}{2}y^2(y +1).
  \end{equation} 
To show that $\bar{z}$ is in fact a fixed point, setting $\varepsilon =0$ in \eqref{eq:unstable fixed point}, we deduce that $y =0$ or $y =-1$. Further we observe that it cannot be the case that $y =-1$ since
  \begin{equation}
    \left\|(0,f(0)) -(0,\tfrac{1}{2})\right\|^2 =\tfrac{1}{4} <\tfrac{5}{4} =\left\|(-1, f(-1)) -(0,\tfrac{1}{2})\right\|^2,
  \end{equation}
and so we conclude that $P_B(0, \frac{1}{2}) =(0,0)$, which together with \eqref{e:Tze} gives 
$T_{A,B}\bar{z} =T_{A,B}z_0 =(0, -\frac{1}{2}) =\bar{z}$ and hence $\bar{z} \in \Fix T_{A,B}$.

Now, to see that $\bar{z}$ is not stable (in the sense of Lipschitz continuity of $T_{A,B}$), consider the point $z_\varepsilon =(-\varepsilon, -\frac{1}{2})$  can be made arbitrarily close to $\bar{z}$ by choosing $\varepsilon>0$ sufficiently small. For all $\varepsilon\approx 0$, the optimality condition \eqref{eq:unstable fixed point} has only one solution at $y\approx -1$. But this implies that 
  \begin{equation}
    T_{A,B}z_\varepsilon \approx (0, -\tfrac{1}{2}) +(-1, \tfrac{1}{2}) =(-1,0),
  \end{equation}
and consequently that $\|T_{A,B}z_\varepsilon -T_{A,B}\bar{z}\| \approx 1$ while $\|z_\varepsilon-\bar{z}\| =\varepsilon$, thus $T_{A,B}$ is not locally Lipschitz around $\bar{z}$. 
Note that it does not contradict \cref{t:stable} since the condition $\bar{\rho}f''(\bar{x})\geq 0$ is not satisfied.\qed
\end{example}
In a later example (Example~\ref{ex:p norm}), we show that in the setting of Example~\ref{ex:unstable fixed point} the DRA is globally convergent.

Recall that a sequence $(z_n)_\nnn$ is said to converge \emph{$R$-linearly} to a point $\bar{z}$ 
if there exist constants $\eta \in \RP$ and $\kappa \in \left[0, 1\right[$ such that
\begin{equation}
(\forall\nnn)\quad \|z_n - \bar{z}\| \leq \eta\kappa^n.
\end{equation}
Clearly the notion of $Q$-linear convergence implies $R$-linear convergence.

To complement the results in this section, we deduce following $R$-linear convergence result using existing results in the literature. Note that, in contrast to setting of \cref{t:stable}, the following result only applies to fixed points at which $\nabla f$ is nonsingular.

\begin{proposition}[Local $R$-linear convergence of the DRA]
\label{p:linear}
Let $\bar{z} :=(\bar{x},0) \in A\cap B$, and suppose that $f$ is continuously differentiable around $\bar{x}$ with $\nabla f(\bar{x}) \neq 0$. 
Then there exists $\delta>0$ such that, for any starting point ${z_0 \in \ball{\bar{z}}{\delta}}$, the DR sequence $(z_n)_\nnn$ converges $R$-linearly to a point in $A\cap B$.   
\end{proposition}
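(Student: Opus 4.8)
The plan is to reduce the statement to a known local linear convergence theorem for the Douglas--Rachford algorithm --- such as those of Phan~\cite{Pha16} or Hesse~\&~Luke~\cite{HL13} --- by verifying that the pair $\{A, B\}$ satisfies the regularity and transversality conditions those results require at $\bar{z} = (\bar{x}, 0)$, and then to use results established above to identify the limit as a point of $A\cap B$. First I would record the local structure of $B$ near $\bar{z}$: since $f$ is continuously differentiable on a neighbourhood $U$ of $\bar{x}$, it is strictly differentiable there, and $B = \gra f$ coincides on $\Pi^{-1}(U)$ with a $C^1$ hypersurface whose limiting normal cone at $(y, f(y))$ is the line $\RR(\nabla f(y), -1)$, which depends continuously on $y \in U$. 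From this continuity, together with the fact that a secant $(y, f(y)) - (y', f(y'))$ is tangent to $B$ up to an $o(\|y - y'\|)$ error, one checks that $B$ is super-regular at $\bar{z}$ (hence also $(\varepsilon, \delta)$-regular there in the sense of~\cite{HL13}); the set $A = X\times\{0\}$ is a subspace, hence convex, hence super-regular at $\bar{z}$ as well. To verify transversality of $\{A, B\}$ at $\bar{z}$, note that $N_A(\bar{z}) = \{0\}\times\RR$ and $N_B(\bar{z}) = \RR(\nabla f(\bar{x}), -1)$: any common element $(0, t) = s(-\nabla f(\bar{x}), 1)$ forces $s\nabla f(\bar{x}) = 0$, hence $s = t = 0$ because $\nabla f(\bar{x}) \neq 0$, so $N_A(\bar{z}) \cap (-N_B(\bar{z})) = \{0\}$.

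With $A$ affine, $B$ super-regular at $\bar{z}$, and $\{A, B\}$ transversal at $\bar{z}$, the quoted theorem then supplies $\delta > 0$ such that every DR sequence with $z_0 \in \ball{\bar{z}}{\delta}$ remains in a prescribed neighbourhood of $\bar{z}$ and converges $R$-linearly to some $z_\infty \in \Fix T_{A,B}$. Shrinking $\delta$ so that this neighbourhood lies inside $\Pi^{-1}(U)$ and $\nabla f$ does not vanish on $U$ --- possible by continuity of $\nabla f$ --- we get $f^{-1}(0) \cap (\nabla f)^{-1}(0) = f^{-1}(0) \cap (\partial^+\! f)^{-1}(0) = \varnothing$ there, so \cref{c:inverseT} together with \cref{l:FixT}\ref{l:FixT_Lipschitz} (or directly \cref{l:gra}\ref{l:gra_Lipschitz}) forces the $\RR$-coordinate of $z_\infty$ to vanish, i.e.\ $z_\infty \in A\cap B$. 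This completes the proof.

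The step I expect to be the main obstacle is matching the hypotheses of the cited theorem exactly: confirming that mere continuous differentiability of $f$ (rather than, say, a Lipschitz gradient) already delivers the super-regularity / $(\varepsilon, \delta)$-regularity the external result demands, and that the result yields convergence of the full DR sequence $(z_n)_\nnn$ --- not only of its shadow $(P_A z_n)_\nnn$ --- to a single fixed point. Once those are settled, the normal-cone computation, the transversality check, and the identification of $z_\infty$ are routine.
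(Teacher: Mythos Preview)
Your approach is essentially the paper's: verify super-regularity of $B$ at $\bar z$, compute $N_A(\bar z)$ and $N_B(\bar z)$, check $N_A(\bar z)\cap(-N_B(\bar z))=\{0\}$, and then invoke Phan's local linear convergence theorem. The only notable difference is how super-regularity is obtained. The paper writes $B$ locally as $\{(x,\rho)\in U\times\RR : G(x,\rho)\in\{0\}\}$ with $G(x,\rho)=f(x)-\rho$, observes that $\nabla G(\bar z)^*\nu=0\Rightarrow\nu=0$, and concludes that $B$ is \emph{amenable} at $\bar z$; then \cite[Proposition~4.8]{LLM09} gives super-regularity directly. This route needs only that $G$ is $C^1$, so it dispatches precisely the concern you flag at the end about whether mere continuous differentiability of $f$ suffices. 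Your secant/normal-continuity argument would also work, but the amenability route is shorter and avoids any hand-verification. Your closing step (using \cref{l:FixT}\ref{l:FixT_Lipschitz} or \cref{c:inverseT} to force $z_\infty\in A\cap B$) is a reasonable safeguard; the paper simply absorbs this into the citation of \cite[Theorem~4.3]{Pha16}, which already yields a limit in $A\cap B$ under the verified hypotheses.
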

\begin{proof}
By assumption, $f$ is continuously differentiable on $U$ from some a neighborhood $U$ of $\bar{x}$. Define a function $G\colon U\times \RR \to \RR\colon (x, \rho) \mapsto f(x) -\rho$ and let $D :=\{0\} \subseteq \RR$.
Then $U\times \RR$ is a neighborhood of $(\bar{x}, 0)$, $G$ is a $C^1$ mapping, $D$ is a closed convex subset of $\RR$, 
\begin{subequations}
\begin{gather}
B\cap (U\times \RR) =\menge{(x, \rho) \in U\times \RR}{G(x, \rho) \in D},\quad\text{~and~} \\ 
(\forall \nu \in \RR)\quad \nabla G(\bar{x}, 0)^*\nu =0 \iff (\nabla f(\bar{x})\nu, -\nu) =0 \iff \nu =0.  
\end{gather}
\end{subequations}
In view of \cite[Definition~10.23(b)]{RW98}, $B$ is \emph{amenable} at $(\bar{x}, 0)$ and hence \emph{superregular} at $(\bar{x}, 0)$ by \cite[Proposition~4.8]{LLM09}. Moreover, the normal cones to $A$ and $B$ can be described, respectively, by \cite[Proposition~1.2]{Mor06} and \cite[Example~6.8]{RW98} as
\begin{subequations}
\begin{align}
N_A(\bar{x}, 0) &=N_X(\bar{x})\times N_{\{0\}}(0) =\{0\} \times \RR \subseteq X\times \RR,\quad\text{~and} \\
N_B(\bar{x}, 0) &=\menge{\nabla G(\bar{x}, 0)^*\nu}{\nu \in \RR} =\menge{(\nabla f(\bar{x})\nu, -\nu)}{\nu \in \RR}.
\end{align}
\end{subequations}
Since it is assumed that $\nabla f(\bar{x}) \neq 0$, it follows that $N_A(\bar{x}, 0)\cap (-N_B(\bar{x}, 0)) =\{0\}$, that is, to say that $\{A,B\}$ is \emph{strongly regular} at $(\bar{x}, 0)$. The assumptions of \cite[Theorem~4.3]{Pha16} (or \cite[Corollary~5.22]{DP16}) are thus satisfied, from which the result follows.
\end{proof}

To conclude this section, we note that \cref{t:stable} applies in situations when does not Proposition~\ref{p:linear}. In a subsequent section, we shall revisit the following example. 

\begin{example}[A stable fixed point]
Consider the function $f =\frac{1}{2}\|\cdot\|^2$ and the point $\{0\} =A\cap B \subseteq \Fix T_{A,B}\subseteq X\times\mathbb{R}$. Then $f$ does not satisfy the assumptions of \cref{p:linear} at $(\bar{x}, \bar{\rho}) =(0, 0)$ because $0 =f(0) =\nabla f(0)$. 
Nevertheless, as $f$ is twice continuously differentiable at $\bar{x}=0$ with $\nabla^2f=\Id$, \cref{t:stable} still applies and shows that the DR operator is single-valued and Lipschitz continuous around $(0, 0)$.\qed
\end{example}

\section{A Lyapunov-type approach to convergence}
\label{s:lyapunov}
 In this section, we prove convergence of the DRA assuming the existence of a Lyapunov-type function which is assumed to possess the following properties on a subset of $X\times \RR$. In fact, our framework also provides a procedure for the construction of such a function. In practice, this mean that the candidate Lyapunov-type function can be concretely constructed and its properties easily checked. 

\noindent\fcolorbox{black}{myblue}{%
\begin{minipage}{0.98\textwidth}
\begin{assumption}
\label{a:V}
There exists a proper convex function $F\colon D \to \left(-\infty, +\infty\right]$ and a nonempty convex subset $D$ of $\dom F$ such that the following hold:
\begin{enumerate}
\item 
\label{a:V_diff} 
The subdifferential of $F$ satisfies
\begin{equation}
\label{e:V_diff}
(\forall x\in D)\quad \partial F(x) \supseteq 
\begin{cases}
\Menge{\frac{f(x)}{\|x^*\|^2}x^*}{x^* \in \partial^0\! f(x)} &\text{~if~} 0 \notin \partial^0\! f(x), \\
\{0\} &\text{~if~} f(x) =0.
\end{cases}
\end{equation}
\item 
\label{a:V_coer}
$F$ is coercive.
\item 
\label{a:V_cont}
$F$ is continuous on $D\cap f^{-1}(0)$.
\end{enumerate}
\end{assumption}
\end{minipage}%
}

The intuition behind \cref{a:V}, specifically \eqref{e:V_diff}, is the similar to that proposed in \cite{Ben15}. 
One seeks a function $V\colon D\times \RR \to \left[-\infty, +\infty\right]$ of the form
\begin{equation}
\label{e:V_form}
(\forall (x, \rho) \in D\times \RR)\quad V(x, \rho) =F(x) +\frac{1}{2}\rho^2
\end{equation}
such that for every $z :=(x, \rho) \in D\times \RR$, its level set at the point $z_+$ is tangent to $z-z_+$, where $z_+\in T_{A,B}z$.  To do so, we construct an $F$ satisfying \cref{a:V} by anti-subdifferentiating \eqref{e:V_diff}. An illustration of such a function is given in \cref{fig:level set}. In particular, if the function $f$ is strictly differentiable at $x\not\in(\nabla f)^{-1}(0)$, then \eqref{e:V_diff} becomes
\begin{equation}
\nabla F(x) =\frac{f(x)}{\|\nabla f(x)\|^2}\nabla f(x).
\end{equation}
and further, when $\dim X =1$, then the expression further simplifies to $F'=f/f'$. 

The two piecewise-defined cases in \eqref{e:V_diff} are consistent in the sense that, if $0\not\in\partial f(x)$ and $f(x)=0$, then both cases yield $0\in\partial F(x)$. The inclusion of the ``$f(x)=0$" case allows our analysis to include situations in which the ``$0\not\in\partial^0\!f(x)$" case has a remove discontinuity.

\begin{figure}[htb]
	\centering
	\begin{subfigure}[b]{0.475\textwidth}
	\centering    
	\begin{tikzpicture}
	\begin{axis}[domain=-0:6,
	y domain=-3:3,
	colormap={blackwhite}{gray(0cm)=(1); gray(1cm)=(0)} ,
	view={0}{90}]
	\addplot3[contour gnuplot={number=35,labels={false}}]{x+exp(-x)*10+y*y/2};
	\addplot[black,domain=0:3.7]  {0.1*pow(2.71828,x)-1};
	\addplot[black] {0};
	\end{axis}
	\end{tikzpicture}
	\caption{A contour plot of the Lyapunov function.}
	\end{subfigure}
	\hfill
    \begin{subfigure}[b]{0.45\textwidth}
	\centering    
	\begin{tikzpicture}[scale=0.85]
	\draw[fill,opacity=0.25] (2.3025,0) circle (2.3024);
	\fill[color=black!25,fill,opacity=0.5] (0,0) -- plot[domain=0:6] (\x,{sqrt(2)*sqrt((10)-\x-10*exp(-\x))}) -- (6,0) -- cycle;
	\fill[color=black!25,fill,opacity=0.5] (0,0) -- plot[domain=0:6] (\x,{-sqrt(2)*sqrt((10)-\x-10*exp(-\x))}) -- (6,0) -- cycle;
	\fill[color=black!70,fill,opacity=0.5] (1.31,0) -- plot[domain=1.32:3.774] (\x,{sqrt(2)*sqrt(4.00513-\x-10*exp(-\x))}) -- (3.774,0) -- cycle;
	\fill[color=black!70,fill,opacity=0.5] (1.31,0) -- plot[domain=1.32:3.774] (\x,{-sqrt(2)*sqrt(4.00513-\x-10*exp(-\x))}) -- (3.774,0) -- cycle;
	
	\draw[color=black,thick] plot[domain=-1:3.8] (\x,{0.1*exp(\x)-1});
	\draw[color=black,thick] plot[domain=-1:6] (\x,0);
	
	\newcommand{\DRseq}{(0.1,-0.89),(0.35,-1.75),(1.05,-2.46),(3.26,-0.85),(2.99,0.14),(2.52,0.38),(2.23,0.31),(2.12,0.14),(2.13,-0.02),(2.22,-0.1),(2.35,-0.04),(2.35,0.01),(2.32,0.02),(2.3,0.02)};
	\draw (2.302585,0) node[draw=black,circle,inner sep=0.5pt,fill=black!80,anchor=center] {};
	\coordinate (x0) at (0,0) ;
	\draw (x0) node[above left] {$z_0$};
	\foreach \x [count=\j] in \DRseq {
		\pgfmathsetmacro{\jj}{\j-1}
		\draw \x coordinate (x\j);
		\draw[black!80,-] (x\jj) -- (x\j);
		\draw (x\jj) node[draw=black!80,circle,inner sep=0.5pt,fill=black!80,anchor=center] {};
	};
	\coordinate (z) at (1.05,-2.46) ;
	\draw (z) node[below right] {$z$};
	\coordinate (z+) at (3.26,-0.85) ;
	\draw (z+) node[below right] {$z_+$};
	\coordinate (zbar) at (2.30258509299,0) ;
	\draw (zbar) node[below right] {$\bar{z}$};
	\end{tikzpicture}
	\caption{DR sequence with $z_0=(0,0)$, the level set $\lev{V(z_0)}V$  (light gray), the ball $\ball{\bar{z}}{\|z_0-\bar{z}\|}$ (gray), and the level set $\lev{V(z_+)}V$ (dark gray).\label{subfig:not nonexp}}
    \end{subfigure}
	\caption{A Lyapunov function, $V$, for $f(x)=\frac{1}{10}\exp(x)-1$ which guarantees global convergence of  the DRA to $\bar{z}:=(\ln (10),0)$ (see \cref{ex:expo}).  Note also that (b) shows that the DR operator is not nonexpansive. \label{fig:level set}}
\end{figure}
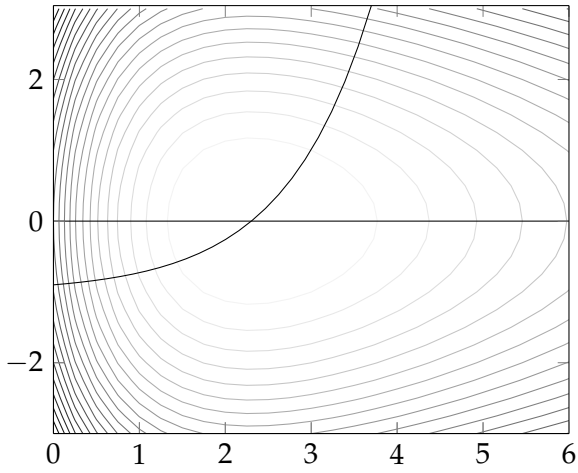
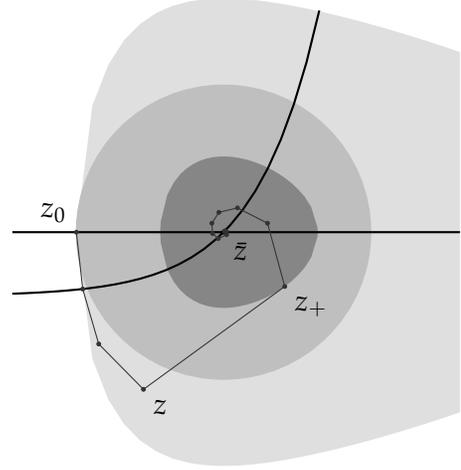

The following lemma investigates properties of the subdifferential of $V$ in \eqref{e:V_form}.
\begin{lemma} 
\label{l:partialV}
Let $z :=(x, \rho) \in \dom f \times \RR$, let $z_+ :=(x_+, \rho_+) \in T_{A,B}(x, \rho)$, and suppose that \cref{a:V}\ref{a:V_diff} holds.
The following assertions hold.
\begin{enumerate}
\item 
\label{l:partialV_formula}
$V$ is a proper convex function on $D\times \RR$ whose subdifferential is given by
\begin{equation}\label{eq:subdiff V product}
(\forall x \in \dom \partial F)\quad \partial V(x, \rho) =\partial F(x)\times \{\rho\}.
\end{equation}
\item
\label{l:partialV_0}
Suppose either: $x_+ \in D\smallsetminus (\partial^0\! f)^{-1}(0)$ and $f$ is Lipschitz continuous around $x_+$, or $x_+\in D\cap f^{-1}(0)$. Then 
then there exists $z'_+ :=(x'_+, \rho_+) \in \partial V(z_+)$ with $x'_+ \in \partial F(x_+)$ such that
\begin{equation}\label{eq:orthogonality condition}
\scal{z'_+}{z -z_+} =\scal{(x'_+, \rho_+)}{(x, \rho) -(x_+, \rho_+)} =0.
\end{equation}
\end{enumerate}
\end{lemma}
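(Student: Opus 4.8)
My plan is to treat the two parts in turn: part~\ref{l:partialV_formula} is a routine application of convex calculus, and part~\ref{l:partialV_0} then reduces to a one-line computation, organised around the two cases in the hypothesis, which mirror exactly the two branches of~\eqref{e:V_diff}.

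For part~\ref{l:partialV_formula}, write $V =\widetilde F +g$ on $X\times \RR$, where $\widetilde F(x, \rho) :=F(x)$ and $g(x, \rho) :=\tfrac{1}{2}\rho^2$. Since $F$ is proper and convex and $D\subseteq \dom F$ is nonempty (the preamble of \cref{a:V}), the function $\widetilde F$ is proper and convex with $\dom\widetilde F =\dom F\times \RR \supseteq D\times \RR$, while $g$ is convex and everywhere strictly differentiable with $\nabla g(x, \rho) =(0, \rho)$; hence $V$ is proper and convex on $D\times \RR$. The subdifferential formula then follows from the sum rule (\cref{f:calculus}\ref{f:calculus_sum}) applied in $X\times \RR$: for $x\in \dom\partial F$ one has $\partial V(x, \rho) =\partial\widetilde F(x, \rho) +\nabla g(x, \rho) =(\partial F(x)\times \{0\}) +(0, \rho) =\partial F(x)\times \{\rho\}$, where the identity $\partial\widetilde F(x, \rho) =\partial F(x)\times \{0\}$ is read off directly from the convex subdifferential inequality~\eqref{e:Fensubdiff}, since $\widetilde F$ does not depend on its second argument.

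For part~\ref{l:partialV_0}, I start from \cref{l:DRstep}, which gives $\rho_+ =\rho +f(x_+)$, so that $\rho -\rho_+ =-f(x_+)$ throughout. If $x_+ \in D\cap f^{-1}(0)$, then $f(x_+) =0$ and $\rho_+ =\rho$, and the ``$f(x) =0$'' branch of~\eqref{e:V_diff} gives $0\in \partial F(x_+)$; taking $x'_+ :=0$ and $z'_+ :=(0, \rho_+)$, part~\ref{l:partialV_formula} yields $z'_+ \in \partial F(x_+)\times \{\rho_+\} =\partial V(z_+)$, and $\scal{z'_+}{z -z_+} =\scal{0}{x -x_+} +\rho_+(\rho -\rho_+) =0$. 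If instead $x_+ \in D\smallsetminus (\partial^0\! f)^{-1}(0)$ and $f$ is Lipschitz continuous around $x_+$, then, since $0\notin \partial^0\! f(x_+)$, the representation~\eqref{e:DRstep} in \cref{l:DRstep} supplies some $x_+^* \in \partial^0\! f(x_+)\smallsetminus \{0\}$ with $x -x_+ =\rho_+ x_+^*$, while the first branch of~\eqref{e:V_diff} supplies $x'_+ :=\bigl(f(x_+)/\|x_+^*\|^2\bigr)x_+^* \in \partial F(x_+)$; setting $z'_+ :=(x'_+, \rho_+)$, part~\ref{l:partialV_formula} gives $z'_+ \in \partial V(z_+)$, and, using $\scal{x'_+}{x_+^*} =f(x_+)$, one computes $\scal{z'_+}{z -z_+} =\scal{x'_+}{\rho_+ x_+^*} +\rho_+(\rho -\rho_+) =\rho_+ f(x_+) -\rho_+ f(x_+) =0$.

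I do not anticipate a genuine obstacle: the orthogonality~\eqref{eq:orthogonality condition} is essentially forced by the way~\eqref{e:V_diff} was reverse-engineered, so that the $X$-component of the chosen subgradient, paired with the displacement $x -x_+ =\rho_+ x_+^*$, cancels the scalar term $\rho_+(\rho -\rho_+) =-\rho_+ f(x_+)$. The only points needing care are bookkeeping ones, namely checking $x_+ \in D$ so that \cref{a:V}\ref{a:V_diff} applies, and $x_+ \in \dom\partial F$ so that the formula of part~\ref{l:partialV_formula} is available at $z_+$ --- the former is part of the hypothesis, and the latter follows from the inclusions $0\in \partial F(x_+)$ or $x'_+ \in \partial F(x_+)$ obtained above --- together with being careful to invoke the Lipschitz-dependent half of \cref{l:DRstep} only in the case where Lipschitz continuity is assumed.
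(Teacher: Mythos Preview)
Your proof is correct and follows essentially the same approach as the paper. The only cosmetic difference is in part~\ref{l:partialV_formula}: the paper invokes a product-subdifferential formula from \cite[Proposition~16.8]{BC11} directly, whereas you reach the same conclusion via the sum rule (\cref{f:calculus}\ref{f:calculus_sum}) together with the explicit identification $\partial\widetilde F(x,\rho)=\partial F(x)\times\{0\}$; for part~\ref{l:partialV_0} your computation, including the choice of $x'_+$ in each case, matches the paper's exactly (only the order of the two cases is swapped).
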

\begin{proof}
\ref{l:partialV_formula}: The function $V$ is proper and convex on $D\times\RR$, since $F$ and $\rho\mapsto\frac{1}{2}\rho^2$ are both proper and are convex on $D$ and $\RR$, respectively. 
Equation~\eqref{eq:subdiff V product} thus follows from \cite[Proposition~16.8]{BC11}.

\ref{l:partialV_0}: 
Let $x'_+ \in \partial F(x_+)$. By \ref{l:partialV_formula}, $z'_+ :=(x'_+, \rho_+) \in \partial F(x_+)\times \{\rho_+\} =\partial V(z_+)$.
We compute
\begin{equation}
\label{e:scal}
\scal{z'_+}{z -z_+} =\scal{(x', \rho_+)}{(x, \rho) -(x_+, \rho_+)} 
=\scal{x_+'}{x -x_+} -\rho_+f(x_+),
\end{equation}
using the fact that $\rho_+ =\rho +f(x_+)$ from \cref{l:DRstep}.
We consider two cases.

\emph{Case~1}: Suppose $x_+ \in D\smallsetminus (\partial^0\! f)^{-1}(0)$ and $f$ is Lipschitz continuous around $x_+$. 
Again by \cref{l:DRstep}, $x_+ =x -\rho_+x_+^*$ for some $x_+^* \in \partial^0\! f(x_+)$. 
Then $x_+^* \neq 0$.
Setting $x'_+ :=\frac{f(x_+)}{\|x_+^*\|^2}x_+^*$, it follows from \eqref{e:scal} that
\begin{equation}
\scal{z'_+}{z -z_+}  =\scal{\tfrac{f(x_+)}{\|x_+^*\|^2}x_+^*}{\rho_+x_+^*} -\rho_+f(x_+) =0. 
\end{equation}

\emph{Case~2}: Suppose $x_+\in D\cap f^{-1}(0)$. Then, by \cref{a:V}\ref{a:V_diff}, $x_+' :=0 \in \partial F(x_+)$ 
which completes the proof.
\end{proof}  
 
We are now ready to give our main result which analyses the Douglas--Rachford algorithm using the proposed Lyapunov-type function. We remark that the assumption in \eqref{e:stableD} with $n_0=0$ holds, in particular, in the setting of Section~\ref{s:stability of DRA} and could be formulated as ``the DRA is stable on $D$''.
\begin{theorem}[Convergence of the DRA]
\label{t:gencvg}
Suppose that \cref{a:V}\ref{a:V_diff}--\ref{a:V_coer} holds, that $f$ is locally Lipschitz continuous on $D\smallsetminus f^{-1}(0)$,  
and that
\begin{equation}\label{e:stableD}
(\exists n_0 \in \NN)(\forall n \geq n_0)\quad x_n \in D \quad\text{and}\quad x_{n+1} \notin (\partial^0\! f)^{-1}(0)\smallsetminus f^{-1}(0).
\end{equation}
Then $(z_n)_\nnn$ is bounded and asymptotically regular, 
and each of its cluster points $\bar{z}$ satisfy $P_A\bar{z} \in A\cap B$.
Suppose additionally that $\overline{D}\cap f^{-1}(0)$ is a singleton, say $\{\bar{x}\}$, contained in $D$ and that \cref{a:V}\ref{a:V_cont} holds. 
Then the following assertions hold:
\begin{enumerate}
\item 
\label{t:gencvg_noSlater}
The DR sequence $(z_n)_\nnn$ converges to a point $\bar{z}$ such that $P_A\bar{z}=(\bar{x},0) \in A\cap B$.
\item
\label{t:gencvg_Slater}
If $0\not\in\partial^0\!f(\bar{x})$ and $f|_D$ is continuous at $\bar{x}$,
then $(z_n)_\nnn$ converges to a point $\bar{z} =(\bar{x}, 0) \in A\cap B$. 
\end{enumerate}
\end{theorem}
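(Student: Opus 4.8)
The plan is to use $V(x,\rho):=F(x)+\tfrac12\rho^2$ as a Lyapunov function along $(z_n)_\nnn$. By \cref{l:partialV}\ref{l:partialV_formula}, $V$ is proper and convex with $\partial V(x,\rho)=\partial F(x)\times\{\rho\}$, and by \cref{l:sumcoer} together with \cref{a:V}\ref{a:V_coer} it is coercive, hence bounded below and with bounded sublevel sets (\cref{f:coer}). For $n\ge n_0$, \eqref{e:stableD} and the local Lipschitz hypothesis place $x_{n+1}$ in the scope of \cref{l:partialV}\ref{l:partialV_0}: either $f(x_{n+1})=0$, or $f(x_{n+1})\ne0$, in which case $x_{n+1}\in D\smallsetminus f^{-1}(0)$ (so $f$ is Lipschitz around it) and $0\notin\partial^0\! f(x_{n+1})$. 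Thus there is $z'_{n+1}=(x'_{n+1},\rho_{n+1})\in\partial V(z_{n+1})$ with $\scal{z'_{n+1}}{z_n-z_{n+1}}=0$. Splitting $V=F+q$ with $q(\rho)=\tfrac12\rho^2$ being $1$-strongly convex, convexity of $F$ and the exact second-order expansion of $q$ give
\begin{equation*}
V(z_n)-V(z_{n+1})\ \ge\ \scal{z'_{n+1}}{z_n-z_{n+1}}+\tfrac12|\rho_n-\rho_{n+1}|^2\ =\ \tfrac12|f(x_{n+1})|^2,
\end{equation*}
where I have used $\rho_{n+1}-\rho_n=f(x_{n+1})$ (\cref{l:DRstep}). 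Hence $(V(z_n))_{n\ge n_0}$ is nonincreasing, so it converges to some $\ell\in\RR$.

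It follows that $z_n\in\lev{V(z_{n_0})}V$ for $n\ge n_0$, and since this sublevel set is bounded, $(z_n)_\nnn$ — and in particular $(\rho_n)_\nnn$ — is bounded. From the displayed inequality, $|f(x_{n+1})|^2\le2(V(z_n)-V(z_{n+1}))\to0$, so $f(x_n)\to0$ and $\rho_{n+1}-\rho_n=f(x_{n+1})\to0$; then \cref{l:DRstep} gives $\|x_n-x_{n+1}\|^2\le(f(x_n)-f(x_{n+1}))(f(x_n)+f(x_{n+1})+2\rho_n)\to0$ because $(\rho_n)_\nnn$ is bounded. Therefore $z_n-z_{n+1}\to0$, i.e.\ $(z_n)_\nnn$ is asymptotically regular. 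As $A$ is convex, $B$ is closed, $A\cap B\ne\varnothing$ and $(z_n)_\nnn$ is asymptotically regular, \cref{l:1cvx}\ref{l:1cvx_cluster} yields $P_A\bar z\in A\cap B$ for every cluster point $\bar z$, and boundedness ensures cluster points exist. This establishes the first assertion.

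Now assume $\overline D\cap f^{-1}(0)=\{\bar x\}\subseteq D$ and \cref{a:V}\ref{a:V_cont}. If $\bar z=(\bar x',\bar\rho')$ is a cluster point, then $(\bar x',0)=P_A\bar z\in A\cap B$ forces $f(\bar x')=0$, while $\bar x'\in\overline D$ since $x_n\in D$ eventually; hence $\bar x'\in\overline D\cap f^{-1}(0)=\{\bar x\}$. So the bounded sequence $(x_n)_\nnn$ has $\bar x$ as its unique cluster point and thus $x_n\to\bar x$. By \cref{a:V}\ref{a:V_diff} (the $f=0$ case), $0\in\partial F(\bar x)$, so $\bar x$ minimizes the convex function $F$, while \cref{a:V}\ref{a:V_cont} gives $F(x_n)\to F(\bar x)$. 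Therefore $\tfrac12\rho_n^2=V(z_n)-F(x_n)\to\ell-F(\bar x)=:\tfrac12 c\ge0$. If $c=0$ then $\rho_n\to0$; if $c>0$ then, for $n$ large, $|\rho_n|\ge\sqrt{c/2}$ while $|\rho_{n+1}-\rho_n|=|f(x_{n+1})|<\sqrt{c/2}$, so $\rho_n$ has eventually constant sign and $\rho_n=\pm\sqrt{\rho_n^2}\to\pm\sqrt c$. In either case $(\rho_n)_\nnn$ converges, so $z_n\to\bar z:=(\bar x,\lim_n\rho_n)$ with $P_A\bar z=(\bar x,0)\in A\cap B$; this is \ref{t:gencvg_noSlater}.

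For \ref{t:gencvg_Slater}, write $\bar\rho:=\lim_n\rho_n$; it remains to show $\bar\rho=0$. Since $z_{n+1}\in(0,\rho_n)+P_B(x_n,-\rho_n)$ and $(x_{n+1},f(x_{n+1}))\in P_B(x_n,-\rho_n)$ (\cref{l:ranT,l:DRstep}), letting $n\to\infty$ and using that the graph of $P_B$ is closed (as $B$ is closed and $d_B$ is continuous) together with $x_{n+1}\to\bar x$, $f(x_{n+1})\to0=f(\bar x)$ gives $(\bar x,0)\in P_B(\bar x,-\bar\rho)$, whence $\bar z\in\Fix T_{A,B}$ by \cref{l:ranT}\ref{l:ranT_eq}. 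Suppose $\bar\rho\ne0$; then $\|y-\bar x\|^2+|f(y)+\bar\rho|^2\ge\bar\rho^2$ for every $y\in\dom f$. Since $0\notin\partial^0\! f(\bar x)=\partial f(\bar x)\cup\partial^+\! f(\bar x)$ and $\widehat\partial f(\bar x)\subseteq\partial f(\bar x)$, $\widehat\partial(-f)(\bar x)\subseteq\partial(-f)(\bar x)$ (by \eqref{e:lim_eps_subdiff}), it follows that $\liminf_{y\to\bar x}f(y)/\|y-\bar x\|<0$ and $\limsup_{y\to\bar x}f(y)/\|y-\bar x\|>0$. When $\bar\rho>0$, choose $y_k\to\bar x$, $y_k\ne\bar x$, with $f(y_k)<0$ and $f(y_k)/\|y_k-\bar x\|$ bounded away from $0$; rewriting the projection inequality as $f(y_k)\bigl(f(y_k)+2\bar\rho\bigr)\ge-\|y_k-\bar x\|^2$ and invoking the closed graph of $f$ to keep $f(y_k)+2\bar\rho$ bounded away from $0$, the left-hand side is comparable to a negative multiple of $\|y_k-\bar x\|$ whereas the right-hand side is of order $\|y_k-\bar x\|^2$ — a contradiction for $k$ large; the case $\bar\rho<0$ is symmetric, via the $\limsup$. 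Hence $\bar\rho=0$ and $z_n\to(\bar x,0)\in A\cap B$. The passage from the Lyapunov decrease to boundedness, asymptotic regularity and $P_A\bar z\in A\cap B$ is routine; the main obstacle is precisely this last step — isolating a single limit and forcing its second coordinate to vanish — which couples the fixed-point description of \cref{l:FixT} with the first-order behaviour of $f$ at $\bar x$ carried by $0\notin\partial^0\! f(\bar x)$.
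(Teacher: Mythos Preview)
Your argument through the end of \ref{t:gencvg_noSlater} is correct and essentially matches the paper's, with one pleasant simplification: where the paper invokes a ``connected cluster set'' lemma (\cite[Corollary~2.7]{BDM15}) to pin down the limit of $(\rho_n)$, your elementary observation that eventually $|\rho_{n+1}-\rho_n|<|\rho_n|$ forces the sign of $\rho_n$ to stabilize is both simpler and self-contained.

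For \ref{t:gencvg_Slater}, however, your approach diverges from the paper's and contains a gap. The paper does not pass through $\bar z\in\Fix T_{A,B}$ at all; instead it works directly with the DR sequence: by \cref{l:DRstep} one has $x_{n-1}-x_n=\rho_n x_n^*$ with $x_n^*\in\partial^0\!f(x_n)$, and asymptotic regularity gives $\rho_n x_n^*\to0$, so if $\bar\rho\ne0$ then $x_n^*\to0$. Since $x_n\in D$ and $f|_D$ is continuous at $\bar x$, we have $x_n\stackrel{f}{\to}\bar x$, and robustness of $\partial^0\!f$ (\cref{l:robustness}) yields $0\in\partial^0\!f(\bar x)$---a contradiction. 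The key is that this argument stays on the DR sequence, which lies in $D$ where the continuity hypothesis bites.

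Your route via the projection inequality $f(y)(f(y)+2\bar\rho)\ge-\|y-\bar x\|^2$ instead invokes an \emph{auxiliary} sequence $y_k\to\bar x$ extracted from $0\notin\widehat\partial f(\bar x)$. The claim that ``the left-hand side is comparable to a negative multiple of $\|y_k-\bar x\|$'' tacitly assumes $f(y_k)\to0$, so that $f(y_k)+2\bar\rho\approx2\bar\rho>0$ and $f(y_k)\asymp-\|y_k-\bar x\|$. But $f(y_k)/\|y_k-\bar x\|\le-\varepsilon$ does not force $f(y_k)\to0$: the projection inequality only excludes the interval $\bigl(-\bar\rho-\sqrt{\bar\rho^2-\|y_k-\bar x\|^2},\,-\bar\rho+\sqrt{\bar\rho^2-\|y_k-\bar x\|^2}\bigr)$, leaving open the branch $f(y_k)\le-\bar\rho-\sqrt{\bar\rho^2-\|y_k-\bar x\|^2}$, along which $f(y_k)(f(y_k)+2\bar\rho)\ge0$ and no contradiction arises. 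Closed graph rules out $f(y_k)\to\ell$ for finite $\ell\ne0$, but not $f(y_k)\to-\infty$; and since the $y_k$ need not lie in $D$, the hypothesis that $f|_D$ is continuous at $\bar x$ gives no control. For a concrete instance where your reasoning breaks: take $X=\RR$, $f(0)=0$, $f(x)=x$ for $x>0$, $f(x)=1/x$ for $x<0$. Then $\gra f$ is closed and $\partial^0\!f(0)=[1,\infty)\not\ni0$, yet every sequence $y_k\to0^-$ realizing $\liminf f(y)/|y|<0$ has $f(y_k)\to-\infty$. To repair this step you must either return to the DR sequence as the paper does, or supply an independent reason why $f$ cannot blow up near $\bar x$ outside $D$.
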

\begin{proof}
Without loss of generality, we can and do assume that $n_0 =0$.
According to \cref{l:partialV}\ref{l:partialV_0}, there is a sequence $(z'_{n+1})_\nnn$ satisfying
\begin{equation}
\label{e:0}
(\forall\nnn)\quad z'_{n+1} =(x'_{n+1}, \rho_{n+1}) \text{~with~} x'_{n+1} \in \partial F(x_{n+1})
\quad\text{and}\quad \scal{z'_{n+1}}{z_n -z_{n+1}} =0.
\end{equation}
This together with the convexity of $F$ and \eqref{e:Fensubdiff} yields
\begin{subequations}
\label{e:decreasing}
\begin{align}
(\forall\nnn)\quad V(z_n) -V(z_{n+1}) &=(F(x_n) -F(x_{n+1})) +\tfrac{1}{2}(\rho_n^2 -\rho_{n+1}^2) \\
&\geq \scal{x'_{n+1}}{x_n -x_{n+1}} +\rho_{n+1}(\rho_n -\rho_{n+1}) +\tfrac{1}{2}|\rho_n -\rho_{n+1}|^2 \\ 
&=\scal{z'_{n+1}}{z_n -z_{n+1}} +\tfrac{1}{2}|\rho_n -\rho_{n+1}|^2 \\ 
&=\tfrac{1}{2}(\rho_n -\rho_{n+1})^2 \geq 0. 
\end{align}
\end{subequations} 
It follows that the sequence $(V(z_n))_\nnn$ is nonincreasing. Since $V$ is coercive by \cref{a:V}\ref{a:V_coer} and \cref{l:sumcoer}, the sequence $(z_n)_\nnn$ is bounded (by \cref{f:coer}\ref{f:coer_lev}), and hence so too are $(x_n)_\nnn$ and $(\rho_n)_\nnn$. 
Moreover, from the coercivity of $V$ and \cref{f:coer}\ref{f:coer_inf}, it transpires that $(V(z_n))_\nnn$ is bounded below and therefore convergent.
As a result, \eqref{e:decreasing} implies that $\rho_n -\rho_{n+1} \to 0$ and, by \cref{l:DRstep}, $f(x_n) =\rho_n -\rho_{n-1} \to 0$ as $n \to +\infty$, 
which combined with the boundedness of $(\rho_n)_\nnn$ yields
\begin{equation}
0 \leq \|x_n -x_{n+1}\|^2 \leq (f(x_n) -f(x_{n+1}))(f(x_n) +f(x_{n+1}) +2\rho_n) \to 0,
\end{equation}
and then $x_n -x_{n+1} \to 0$ as $n \to +\infty$. 
Therefore, $\|z_n -z_{n+1}\|^2 =\|x_n -x_{n+1}\|^2 +|\rho_n -\rho_{n+1}|^2 \to 0$,
which gives the asymptotic regularity of $(z_n)_\nnn$.  
Now \cref{l:1cvx}\ref{l:1cvx_cluster} completes the claim on cluster points of the sequence $(z_n)_\nnn$.

\ref{t:gencvg_noSlater}:
We first prove convergence of the sequence $(x_n)_\nnn$. 
Let $x$ be an arbitrary cluster point of the bounded sequence $(x_n)_\nnn$. 
Since $(\rho_n)_\nnn$ is bounded, there exists $\rho\in\mathbb{R}$ such that 
$z =(x, \rho)$ is a cluster point of $(z_n)_\nnn =((x_n, \rho_n))_\nnn$. 
It follows that $P_Az =(x,0) \in A\cap B$, hence $f(x) =0$ and consequently $x \in \overline{D}\cap f^{-1}(0) =\{\bar{x}\}$, which yields $x =\bar{x}$.
Thus, $\bar{x}$ is the unique cluster point of the bounded sequence $(x_n)_{\nnn}$, 
which implies that $(x_n)_\nnn$ converges to $\bar{x}$.
	
We now prove convergence of the sequence $(\rho_n)_\nnn$. 
To this end, let $\bar{\rho}$ denote an arbitrary cluster point of the bounded sequence $(\rho_n)_\nnn$. 
Then there is a subsequence $(z_{k_n})_\nnn =((x_{k_n}, \rho_{k_n}))_\nnn$ which converges to $\bar{z} =(\bar{x}, \bar{\rho})$. 
By \cref{a:V}\ref{a:V_cont}, $V$ is continuous at $\bar{z} =(\bar{x}, \bar{\rho})$ and so $V(z_{k_n}) \to V(\bar{z})$. 
Since $(V(z_n))_\nnn$ is nonincreasing (see \eqref{e:decreasing}), it follows that
\begin{equation}
\label{e:limV}
V(z_n) \to V(\bar{z}) =F(\bar{x}) +\tfrac{1}{2}\bar{\rho}^2 \quad\text{as}\quad n \to +\infty.
\end{equation} 
Combining with the continuity of $F$ gives
\begin{equation}
\rho_n^2 =2(V(z_n) -F(x_n))\to 2(V(\bar{z}) -F(\bar{x})) =\bar{\rho}^2 \quad\text{as}\quad n \to +\infty.
\end{equation} 
We thus deduce that $(\rho_n)_\nnn$ has at most two cluster point (namely, $\pm\bar{\rho}$). 
However, since $(\rho_n)_\nnn$ is bounded and asymptotically regular, 
\cite[Corollary~2.7]{BDM15} implies that the set of cluster points must be connected, and is therefore equal to $\{\bar{\rho}\}$. 
Therefore, $(\rho_n)_\nnn$ converges to $\bar{\rho}$ and the conclusion follows.

\ref{t:gencvg_Slater}: By \cref{l:DRstep}, 
$(\forall n \in \NN\smallsetminus \{0\})$ $x_{n-1} -x_n =\rho_n x_n^*$ with $x_n^* \in \partial^0\! f(x_n)$, 
and so 
\begin{equation}
\label{e:asymptotic}
\rho_n x_n^* \to 0 \quad\text{as~} n \to +\infty.
\end{equation}
By \ref{t:gencvg_noSlater}, $x_n \to \bar{x}$, $\rho_n \to \bar{\rho}$, and $(\bar{x}, 0) \in A\cap B$.
It thus suffices to show that $\bar{\rho} =0$. 
Suppose to the contrary that $\bar{\rho} \neq 0$. Then \eqref{e:asymptotic} yields $x_n^* \to 0$.
Since $x_n \to \bar{x}$ and since $f|_D$ is continuous at $\bar{x}$, we have that $x_n \stackrel{f}{\to} \bar{x}$
and hence $0 \in \partial^0\! f(\bar{x})$ due to \cref{l:robustness}. This contradicts the assumption that $0 \not\in \partial^0\! f(\bar{x})$ and completes the proof.
\end{proof}

We make the following observations regarding the proof of \cref{t:gencvg}.
\begin{remark}
\begin{enumerate}
\item 
To deduce boundedness of the sequence $(z_n)$ in \cref{t:gencvg}, \eqref{e:decreasing} shows $(V(z_n))$ to be nonincreasing and uses the coercivity of $V$ (which is equivalent to the assumed coercivity of $F$ in \cref{a:V}\ref{a:V_coer}). For this argument, it would suffice to assume that $V$ is \emph{weakly coercive} in the sense that $\inf V(D\times \RR) >-\infty$ and $(V(z_n))_\nnn$ is not a nonincreasing sequence as $z_n \in D\times \RR$, $\|z_n\| \to +\infty$.
\item
The assumption that \emph{``$\overline{D}\cap f^{-1}(0)$ is a singleton contained in $D$"} is satisfied, for instance, when the function $F$ satisfying \cref{a:V} is strictly convex on $\overline{D}\cap f^{-1}(0) \subseteq D$ (which holds, in particular, if $F$ is strictly convex on $D$). In this case, since $0\in F(x)$ whenever $f(x)=0$, it follows that $\overline{D}\cap f^{-1}(0)$ is the unique minimizer of $F$.
\end{enumerate}
\end{remark}

In the following corollary, we investigate linear convergence behavior of the DRA.
\begin{corollary}[Linear convergence of the DRA]
\label{c:lincvg}
Suppose that \cref{a:V} holds, that $f$ is locally Lipschitz continuous on $D\smallsetminus f^{-1}(0)$,
that $\overline{D}\cap f^{-1}(0) =\{\bar{x}\} \subseteq D$, 
and that
\begin{equation}\label{e:stableD2}
(\exists n_0 \in \NN)(\forall n \geq n_0)\quad x_n \in D \quad\text{and}\quad x_{n+1} \notin (\partial^0\! f)^{-1}(0)\smallsetminus f^{-1}(0).
\end{equation}
Then the following assertions hold:
\begin{enumerate}
\item 
\label{c:lincvg_R}
If $f$ is continuously differentiable around $\bar{x}$ with $\nabla f(\bar{x}) \neq 0$, then the DR sequence $(z_n)_\nnn$ converges $R$-linearly to $\bar{z} =(\bar{x}, 0) \in A\cap B$.
\item
\label{c:lincvg_Q}
If $X =\RR$ and $f$ is twice strictly differentiable at $\bar{x}$ with $f'(\bar{x}) \neq 0$, then the DR sequence $(z_n)_\nnn$ converges $Q$-linearly to $\bar{z} =(\bar{x}, 0) \in A\cap B$ with rate 
\begin{equation}\label{eq:Qlinear rate kappa}
\kappa :=\frac{1}{\sqrt{1 +|f'(\bar{x})|^2}}.
\end{equation}
\end{enumerate}
\end{corollary}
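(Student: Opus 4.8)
The plan is to feed the global convergence delivered by \cref{t:gencvg} into the local linear-rate results \cref{p:linear} and \cref{c:Qlinear}, and then discard a finite initial segment of the DR sequence so that the local rate propagates to the whole sequence.

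First I would verify that \cref{t:gencvg}\ref{t:gencvg_Slater} applies in both cases. The standing hypotheses of \cref{t:gencvg} are in force: \cref{a:V} is assumed, $f$ is locally Lipschitz continuous on $D\smallsetminus f^{-1}(0)$, the condition \eqref{e:stableD2} is exactly \eqref{e:stableD}, and $\overline D\cap f^{-1}(0)=\{\bar x\}\subseteq D$. In case~\ref{c:lincvg_R}, $f$ is $C^1$ around $\bar x$, hence strictly differentiable at $\bar x$, so $\partial^0\!f(\bar x)=\{\nabla f(\bar x)\}$, which excludes $0$ since $\nabla f(\bar x)\neq0$; moreover $f|_D$ is continuous at $\bar x$. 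In case~\ref{c:lincvg_Q}, twice strict differentiability of $f$ at $\bar x$ forces $\nabla f$ to be Lipschitz around $\bar x$ (as in the proof of \cref{t:stable}), hence $f$ is $C^1$ around $\bar x$, strictly differentiable at $\bar x$, with $\partial^0\!f(\bar x)=\{f'(\bar x)\}\not\ni0$, and again $f|_D$ is continuous at $\bar x$. Thus in both cases \cref{t:gencvg}\ref{t:gencvg_Slater} yields $z_n\to\bar z=(\bar x,0)\in A\cap B$.

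For~\ref{c:lincvg_R}, I would invoke \cref{p:linear}: there is $\delta>0$ such that every DR sequence started in $\ball{\bar z}{\delta}$ converges $R$-linearly to a point of $A\cap B$. Since $z_n\to\bar z$, fix $n_1\in\NN$ with $z_{n_1}\in\ball{\bar z}{\delta}$. The shifted sequence $(z_{n_1+k})_{k\in\NN}$ is again a DR sequence with starting point in $\ball{\bar z}{\delta}$, so $\|z_{n_1+k}-z^*\|\leq\eta\kappa^k$ for all $k\in\NN$, some $z^*\in A\cap B$, and constants $\eta\in\RP$, $\kappa\in\left[0,1\right[$. As the full sequence converges to $\bar z$, we must have $z^*=\bar z$, whence $\|z_n-\bar z\|\leq(\eta\kappa^{-n_1})\kappa^n$ for $n\geq n_1$; enlarging the constant to absorb the finitely many indices $n<n_1$ yields $R$-linear convergence of $(z_n)_\nnn$ to $\bar z$. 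For~\ref{c:lincvg_Q}, I would argue analogously using \cref{c:Qlinear}: since $X=\RR$ and $f$ is twice strictly differentiable at $\bar x$ with $f'(\bar x)\neq0$, there is $\delta>0$ such that $T_{A,B}$ is a single-valued contraction on $\ball{\bar z}{\delta}$ with $T_{A,B}(\ball{\bar z}{\delta})\subset\ball{\bar z}{\delta}$ and every DR sequence started in $\ball{\bar z}{\delta}$ converges $Q$-linearly to $\bar z$ with rate $\kappa=1/\sqrt{1+|f'(\bar x)|^2}$. Choosing $n_1$ with $z_{n_1}\in\ball{\bar z}{\delta}$, the shifted sequence $(z_{n_1+k})_{k\in\NN}$ remains in $\ball{\bar z}{\delta}$ and converges $Q$-linearly to $\bar z$ with rate $\kappa$; since the $\limsup$ in the definition of $Q$-linear convergence depends only on the tail of the sequence, $(z_n)_\nnn$ converges $Q$-linearly to $\bar z$ with the same rate $\kappa$.

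The only point requiring care is the interface bookkeeping: one must ensure that the limit furnished by the local results coincides with the already-identified point $\bar z=(\bar x,0)$ — which is why \cref{t:gencvg}\ref{t:gencvg_Slater}, and not merely part~\ref{t:gencvg_noSlater}, must be invoked — and one must observe that deleting finitely many initial terms is harmless for both $R$- and $Q$-linear convergence (trivial for the $\limsup$ characterization, a constant adjustment for the $R$-linear case). I do not expect any substantive obstacle beyond this.
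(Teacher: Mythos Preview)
Your proposal is correct and follows essentially the same approach as the paper's proof: invoke \cref{t:gencvg}\ref{t:gencvg_Slater} to obtain $z_n\to\bar z=(\bar x,0)$, then feed this into \cref{p:linear} for \ref{c:lincvg_R} and \cref{c:Qlinear} for \ref{c:lincvg_Q}. Your version simply makes explicit the details the paper leaves to the reader --- the verification that $0\notin\partial^0\!f(\bar x)$, that twice strict differentiability at $\bar x$ yields the $C^1$ hypothesis of \ref{c:lincvg_R}, and the tail-shift bookkeeping --- but the strategy is identical.
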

\begin{proof}
\ref{c:lincvg_R}:~Applying \cref{t:gencvg}\ref{t:gencvg_Slater} yields that $z_n\to\bar{z}=(\bar{x},0)\in A\cap B$. 
Since $\nabla f(\bar{x})\neq 0$, $R$-linear convergence of $(z_n)_{\nnn}$ follows from  \cref{p:linear}.

\ref{c:lincvg_Q}:~From \ref{c:lincvg_R} it follows, in particular, that $z_n\to\bar{z}=(\bar{x},0)$. 
Since $X =\mathbb{R}$ and $f'(\bar{x}) \neq 0$, $Q$-linear convergence of $(z_n)_{\nnn}$ with rate $\kappa$ given by \eqref{eq:Qlinear rate kappa} follows from \cref{c:Qlinear}.
\end{proof}

 Note that the \emph{stability condition} given by \eqref{e:stableD} in \cref{t:gencvg} (resp.\ \eqref{e:stableD2} in \cref{c:lincvg}) can be simplified in the following setting.
 \begin{remark}[Stability condition]\label{r:gencvg}
	Suppose that the function $f$ satisfies $(\partial^0\!f)^{-1}(0)\subseteq f^{-1}(0)$. Then condition \eqref{e:stableD} in \cref{t:gencvg} simplifies to
 	\begin{equation}
 	(\exists n_0 \in \NN)(\forall n \geq n_0)\quad x_n \in D.
 	\end{equation}
    The same simplification is also valid for \eqref{e:stableD2} in the context of \cref{c:lincvg}.
 \end{remark}

\section{Examples}
\label{s:examples}
In this section, we give some illustrative examples of instances in which either \cref{t:gencvg} or \cref{c:lincvg} can be utilized to deduce global/local convergence. In the following examples, we take it for granted that the convexity of the Lyapunov-type function $F$ is recognized using \cref{f:cvx}.

Our first two examples involve functions not possessing critical points.
\begin{example}[Global $Q$-linear convergence for the linear function]
Suppose that $X =\RR$ and that $f(x) :=\alpha x -\beta$, where $\alpha \in \RR\smallsetminus \{0\}$ and $\beta \in \RR$. This example is actually a convex feasibility problem, since both sets are affine lines.
The function $f$ is continuously differentiable (and hence locally Lipschitz) on $D :=\dom f =\RR$, $D\cap f^{-1}(0) =f^{-1}(0) =\{\beta/\alpha\}$, and $f' =\alpha \neq 0$. The function $F$ defined by
\begin{equation}
F(x) :=\int \frac{f(x)}{f'(x)}dx =\int \left(x -\frac{\beta}{\alpha}\right)dx =\frac{1}{2}x^2 -\frac{\beta}{\alpha}x
\end{equation} 
satisfies \cref{a:V}. Thus, since $f$ satisfies \cref{c:lincvg}\ref{c:lincvg_Q}, the DRA thus globally $Q$-linearly converges with rate $\kappa =1/\sqrt{1 +|\alpha|^2}$.

Note that, letting  $\theta$ denote the angle between $A$ and $B$ (two lines in the Euclidean plane), we have that $\theta \in \left]0, \pi/2\right]$, $\tan\theta =|\alpha|$, and so
\begin{equation}\label{eq:fred}
\kappa =\frac{1}{\sqrt{1 +\tan^2\theta}} =\cos\theta.
\end{equation}
For the DRA, the rate given by \eqref{eq:fred} is precisely that which was obtained and shown to be sharp in \cite{BBNPW14}. Consequently, our $Q$-linear rate is also sharp.\qed
\end{example}

\begin{example}[Global $Q$-linear convergence for the exponential function]
	\label{ex:expo}
Suppose that $X =\RR$ and that $f(x) :=\alpha\exp(x) -\beta$ with $(\alpha, \beta) \in \RPP^2$. 
Then $f$ is continuously differentiable (and hence locally Lipschitz) on $D :=\dom f =\RR$, $D\cap f^{-1}(0) =f^{-1}(0) =\{\ln(\beta/\alpha)\}$, and $(\forall x \in D)$ $f'(x) =\alpha\exp(x) \neq 0$. 
In this case, the function $F$ defined by
\begin{equation}
F(x):=\int \frac{f(x)}{f'(x)}dx =\int \left(1 -\frac{\beta}{\alpha}\exp(-x)\right)dx =x +\frac{\beta}{\alpha}\exp(-x)
\end{equation} 
satisfies \cref{a:V}. 
The function $f$ is twice strictly differentiable at $\bar{x} :=\ln(\beta/\alpha)$ with $f'(\bar{x}) =\beta >0$.
According to \cref{c:lincvg}\ref{c:lincvg_Q}, the DRA globally $Q$-linearly converges with rate $\kappa =1/\sqrt{1 +\beta^2}$.\qed
\end{example}	

The following example was considered in \cite[Example~6.3]{BDNP16a} for $B$ equal to the epigraph rather than graph of $f$. Note that although it does not satisfy the local regularity conditions required to apply \cite{Pha16}, our analysis is nevertheless able to prove global convergence. Note also that \cref{ex:unstable fixed point} concerning  unstable fixed points is a special case.
\begin{example}[Global convergence for $f =\alpha\|\cdot\|^p$]
\label{ex:p norm}
Suppose $f =\alpha\|\cdot\|^p$ for $\alpha \in \RR\smallsetminus \{0\}$ and $p \in \left]0, +\infty\right[$.
Then $D :=\dom f =X$, $f^{-1}(0)=\{0\}$, and $f$ is continuously differentiable (and hence locally Lipschitz) on $X\setminus \{0\}$ with 
\begin{equation}
\partial^0\! f(x) =\partial f(x) =\{\nabla f(x)\} =\{\alpha p\|x\|^{p-2}x\} \not\ni 0 \text{~whenever~} x \neq 0.
\end{equation}  
Thus we have $(\partial^0\! f)^{-1}(0) \subseteq \{0\} =f^{-1}(0)$,
\begin{equation}
(\forall x \in X\smallsetminus \{0\})\quad 
\frac{f(x)}{\|\nabla f(x)\|}\nabla f(x) =\frac{\alpha\|x\|^p}{\alpha^2p^2\|x\|^{2p-2}}\alpha p\|x\|^{p-2}x =\frac{1}{p}x,
\end{equation}
and deduce that the functions $F(x) =\frac{1}{2p}\|x\|^2$ satisfies \cref{a:V}. Noting \cref{r:gencvg}, we deduce that the assumptions of \cref{t:gencvg} are therefore satisfied and global convergence of the DRA follows.\qed
\end{example}

The Lyapunov function constructed in \cref{ex:p norm} also applied in the following variation.

\begin{example}[Global convergence for $f =\alpha|\cdot|^p\sgn(\cdot)$]
	\label{ex:p norm sign}
Suppose that $X =\RR$ and that $f =\alpha|\cdot|^p\sgn(\cdot)$, where $\alpha \in \RR\smallsetminus \{0\}$ and $p \in \left]0, +\infty\right[$.
Then $D :=\dom f =X$, $f^{-1}(0)=\{0\}$, and $f$ is continuously differentiable (and hence locally Lipschitz) on $X\setminus \{0\}$ with
\begin{equation}
\partial^0\! f(x) =\partial f(x) =\{f'(x)\} =\{\alpha p|x|^{p-1}\} \not\ni 0 \text{~whenever~} x \neq 0.
\end{equation} 
Therefore, $(\partial^0\! f)^{-1}(0) \subseteq \{0\} =f^{-1}(0)$ and 
\begin{equation}
(\forall x \in X\smallsetminus \{0\})\quad 
\frac{f(x)}{f'(x)} =\frac{1}{p}|x|\sgn(x) =\frac{1}{p}x.
\end{equation}
We deduce that $F(x) =\frac{1}{2p}x^2$ satisfies \cref{t:gencvg} (noting \cref{r:gencvg}) and global convergence follows. 

Furthermore, when $p \leq 1$, it can be seen that $\partial^0\! f(0) =\varnothing$. In this case, \cref{t:gencvg}\ref{t:gencvg_Slater} applies to show that the DRA globally converges to $(0, 0) \in A\cap B$; see \cref{fig:DRA x13} for an illustration when $\alpha =3$ and $p =1/3$. On the other hand, when $p > 1$, we have $\alpha p|x|^{p-1} \to 0$ as $x \to 0$ and \cref{l:robustness} implies $0 \in \partial^0\! f(0)$. In this case, \cref{t:gencvg}\ref{t:gencvg_Slater} does not apply and, further, \cref{fig:DRA x3} suggests that the DRA may converge to a point $\bar{z} \notin A\cap B$ which has $P_A\bar{z} \in A\cap B$.\qed    
\end{example}

\begin{figure}[htb]
	\centering
	\begin{subfigure}[b]{0.475\textwidth}
		\centering
		\includegraphics[height=0.7\textwidth]{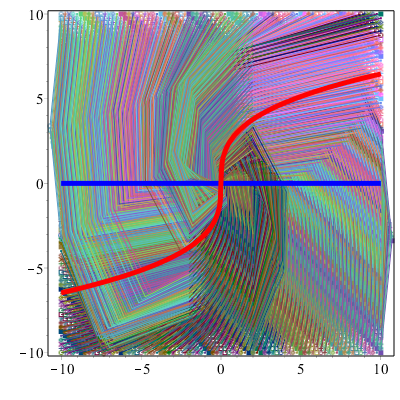}
		\caption{The trajectories of the DRA by starting point.\\[1.25em]}
	\end{subfigure}
	\begin{subfigure}[b]{0.475\textwidth}
		\centering     
		\includegraphics[height=0.7\textwidth]{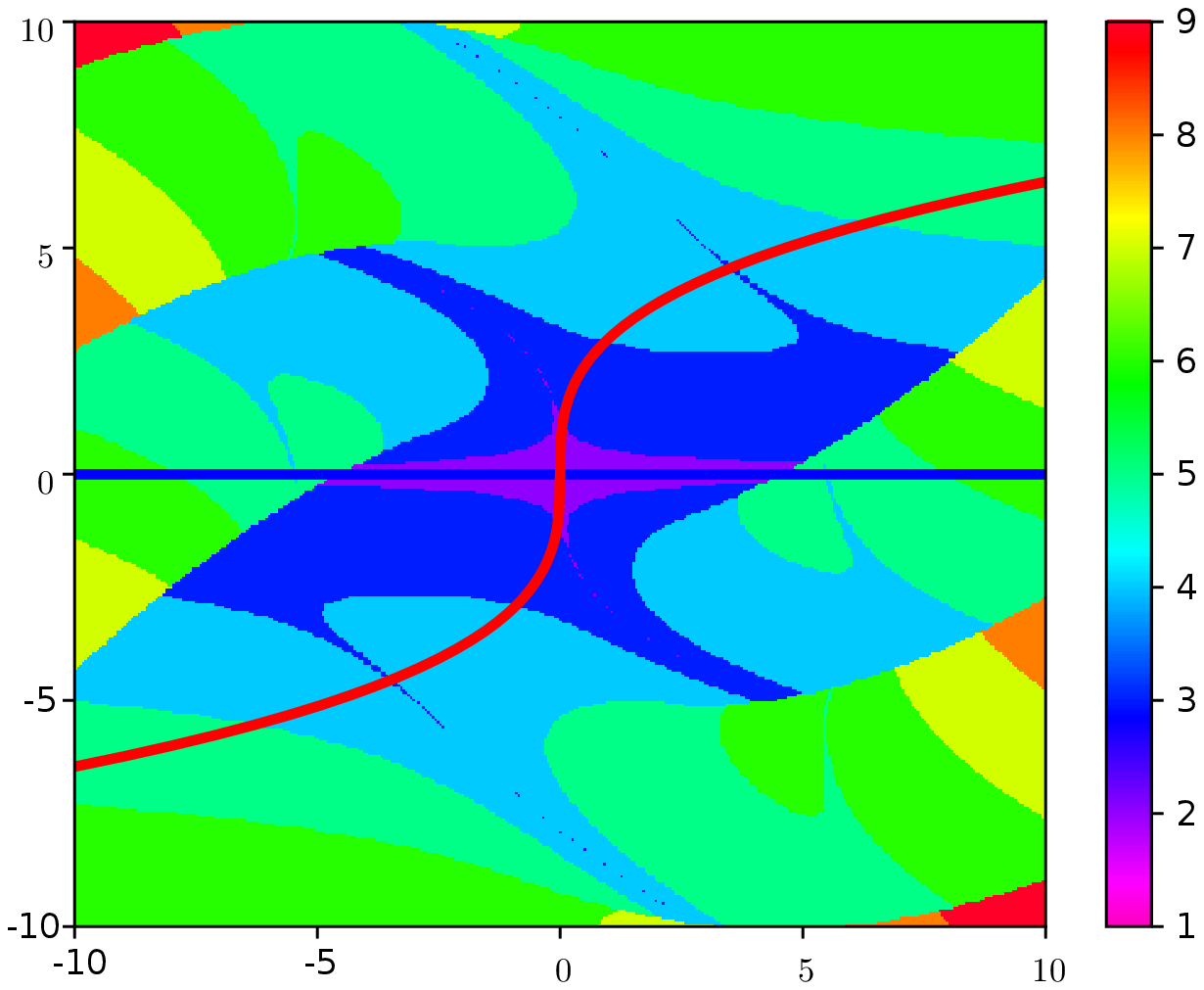}
		\caption{Number of iterations to reach within $10^{-6}$ of the solution by starting point.}
	\end{subfigure}
	\caption{Illustrations of the DRA for the function $f(x)=3\sqrt[3]{x}$ on $[-10,10]\times[-10,10]$. The corresponding feasibility problem has a unique solution with $A\cap B=\{(0,0)\}$.}\label{fig:DRA x13}
\end{figure}

\begin{figure}[htb]
	\centering
	\begin{subfigure}[b]{0.475\textwidth}
		\centering
		\includegraphics[height=0.7\textwidth]{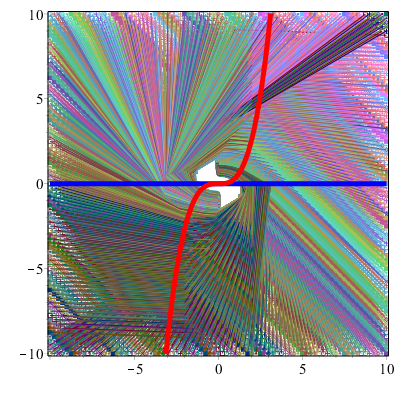}
      \caption{The trajectories of the DRA by starting point.\\[1.25em]}
	\end{subfigure}
	\begin{subfigure}[b]{0.475\textwidth}
		\centering     
		\includegraphics[height=0.7\textwidth]{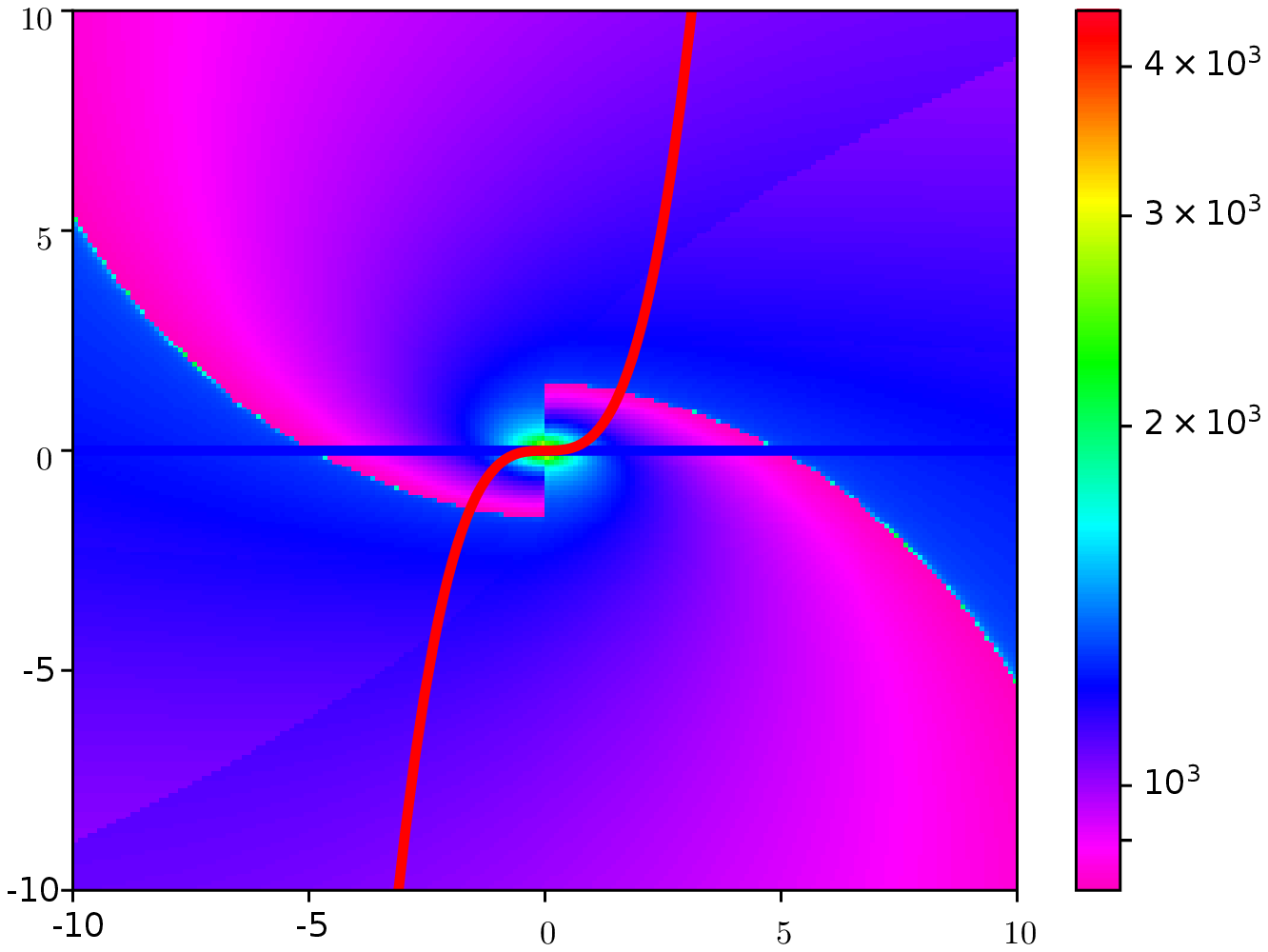}
      \caption{Number of iterations to reach within $10^{-6}$ of the solution by starting point.}
	\end{subfigure}
	\caption{Illustrations of the DRA for the function $f(x)=\frac{1}{3}x^3$ on $[-10,10]\times[-10,10]$. The corresponding feasibility problem has a unique solution with $A\cap B=\{(0,0)\}$.}\label{fig:DRA x3}
\end{figure}

\begin{example}[Benoist's Lyapunov function {\cite{Ben15}}]
Suppose that $X =\RR$ and let $f(x) =-\beta\sqrt{1 -x^2} +\alpha$, where $\beta \in \RPP$ and $\alpha \in \left]0, \beta\right[$. 
Up to symmetry, the case $\beta =1$ covers the setting of Benoist~\cite{Ben15}. 
We have that $\dom f =\left[-1, 1\right]$ and  
\begin{equation}
(\forall x \in \left]-1, 1\right[)\quad \partial^0\! f(x) =\partial f(x) =\{f'(x)\} =\left\{ \frac{\beta x}{\sqrt{1 -x^2}} \right\}
\end{equation} 
which yields continuous differentiability of $f$ on $\intdom f =\left]-1, 1\right[$ and that
\begin{subequations}
\begin{align}
F(x) &:=\int \frac{f(x)}{f'(x)} dx 
=\int \left( x -\frac{1}{x} +\frac{\alpha}{\beta}\frac{\sqrt{1-x^2}}{x} \right)dx \\
&\phantom{:}=\int \left( x -\big(1-\frac{\alpha}{\beta}\big)\frac{1}{x} -\frac{\alpha}{\beta}\frac{x}{1+\sqrt{1-x^2}} \right)dx \\
&\phantom{:}=\frac{1}{2}x^2 -\big(1 -\frac{\alpha}{\beta}\big)\ln|x| 
+\frac{\alpha}{\beta}\sqrt{1 -x^2} -\frac{\alpha}{\beta}\ln\big(1 +\sqrt{1 -x^2}\big).\label{eq:F benoist}
\end{align}
\end{subequations}
We note that the corresponding Lyapunov candidate function is therefore
\begin{equation}\label{eq:benoists Lyap}
V(x,\rho) =\frac{1}{2}x^2 -\big(1 -\frac{\alpha}{\beta}\big)\ln|x| 
+\frac{\alpha}{\beta}\sqrt{1 -x^2} -\frac{\alpha}{\beta}\ln\big(1 +\sqrt{1 -x^2}\big) +\frac{1}{2}\rho^2.
\end{equation}
When $\beta =1$, this is precisely the Lyapunov function obtained in \cite{Ben15}. 

Now let $\bar{x} \in f^{-1}(0) =\{\pm\sqrt{1 -(\alpha/\beta)^2}\} \subseteq \left]-1, 1\right[\smallsetminus \{0\}$ and set $\bar{z} :=(\bar{x}, 0) \in A\cap B$. Then $f'(\bar{x}) \neq 0$. 
Noting that $f'$ is strictly differentiable at $\bar{x}$, \cref{c:Qlinear} applies and thus there exists a $\delta>0$ such that, whenever $z_0 \in \ball{\bar{z}}{\delta}$, the DR sequence $(z_n)_\nnn$ is $Q$-linearly convergent to $\bar{z}$ with rate $\kappa$ given by
\begin{equation}
\kappa :=\frac{1}{\sqrt{1 +|f'(\bar{x})|^2}}  =\frac{\alpha}{\sqrt{\alpha^2 +\beta^2(\beta^2-\alpha^2)}}.
\end{equation}  
\qed     
\end{example}

\begin{example}[Global convergence for a nonsmooth, nonconvex function]
	\label{ex:nonsmooth nonconvex}
	Suppose $X =\RR$ and let $p\in \left]1, +\infty\right[$. 
	Consider the function $f\colon X\to \RR$ which is locally Lipschitz on $X$ and its symmetric subdifferential given by
	\begin{equation}
	f(x) := \begin{cases}
	x^p &\text{~if~} x \geq 0, \\
	x    &\text{~if~} x <0,
	\end{cases}\qquad
	\partial^0\!f(x) =\begin{cases}
	px^{p-1} &\text{~if~} x \geq 0, \\
	[0,1]   &\text{~if~} x =0, \\
	1   &\text{~if~} x <0,
	\end{cases}    
	\end{equation}	
	where we note that the nonconvex function $f$ is not smooth at $x = 0$, and that $(\partial^0\! f)^{-1}(0) =\{0\} =f^{-1}(0)$.
	
	An antiderivative of $x\mapsto \frac{f(x)}{\|\partial^0\! f(x)\|^2} \partial^0\! f(x)$ is given by $F$ where
	\begin{equation}
	F(x) := \begin{cases}
	\frac{1}{2p}x^2 &\text{~if~} x \geq 0, \\
	\frac{1}{2}x^2   &\text{~if~} x <0,
	\end{cases}
	\end{equation}
	and hence the function $F$ satisfies \cref{a:V} with $D=X$ (see \cref{fig:nonsmooth nonconvex}). From \cref{t:gencvg}, it follows that the DRA is globally convergent to a point $z$ such that $P_Az\in A\cap B$.\qed
\end{example}

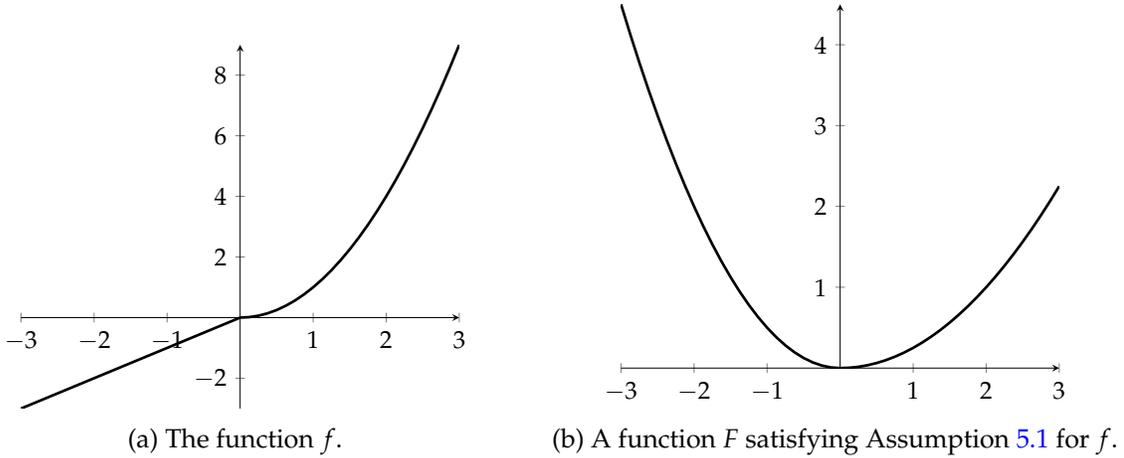
\begin{figure}[htb]
	\centering
	\begin{subfigure}[b]{0.475\textwidth}
		\centering    
		\begin{tikzpicture}[scale=0.85]
		\begin{axis}[axis lines=middle]
		\addplot[domain=0:3,very thick] {x^2};
		\addplot[domain=-3:0,very thick] {x};
		\end{axis}
		\end{tikzpicture}
		\caption{The function $f$.}
	\end{subfigure}
	\begin{subfigure}[b]{0.475\textwidth}
		\centering   
		\begin{tikzpicture}[scale=0.85]
		\begin{axis}[axis lines=middle]
		\addplot[domain=0:3,very thick] {1/4*x^2};
		\addplot[domain=-3:0,very thick] {1/2*x^2};
		\end{axis}
		\end{tikzpicture}
		\caption{A function $F$ satisfying \cref{a:V} for $f$.}
	\end{subfigure}
	\caption{The functions from Example~\ref{ex:nonsmooth nonconvex} with $p =2$.\label{fig:nonsmooth nonconvex}}
\end{figure}

Our concluding example compares the DRA with two other methods which can be used to find zeros of a function in this setting. Recall that the \emph{method of alternating projections (MAP)} generates sequences according to
\begin{equation}
(x_{n+1}, \rho_{n+1}) \in P_BP_A(x_n, \rho_n) =P_B(x_n, 0),
\end{equation}	  
where, applying Lemma~\ref{l:gra}, we have that the sequence $(x_n)$ satisfies 
\begin{equation}
x_n \in x_{n+1} +f(x_{n+1})\partial^0\! f(x_{n+1}),
\end{equation}
whenever $f$ is Lipschitz continuous around $x_{n+1}$ and, in particular, if $f$ is convex, then
\begin{equation}
x_n \in x_{n+1}+f(x_{n+1})\partial f(x_{n+1})\text{ whenever }x_{n+1}\in \intdom f.
\end{equation}
Recall also that \emph{Newton's method} for a (Fr\'echet) differentiable function $f$ is given by
  \begin{equation}\label{eq:newton}
  x_{n+1} = x_n-\left[\nabla f(x_n)\right]^{-1}f(x_n).
  \end{equation}
This iteration is only well defined provided that $\nabla f(x_n)$ is invertible for all $\nnn$. 
In particular, when $X =\RR$, \eqref{eq:newton} simplifies to
  \begin{equation}
  x_{n+1} = x_n-\frac{f(x_n)}{f'(x_n)}.
  \end{equation}
Under appropriate differentiability assumptions, Newton's method is locally \emph{quadratically convergent} for starting points sufficiently close to a zero, which is a faster rate than would be expected from a first-order method such as the DRA. In contrast, the global behavior of the Newton's method can be quite \emph{chaotic} and give rise to the so-called \emph{Newton fractals} \cite[Chapter~VII, Section~3]{Bar93}. The distinguishing feature of the DRA in the following is therefore that it converges from any starting point.

\begin{example}[DRA vs MAP vs Newton's method]
Suppose $X =\RR$ and consider the convex function $f\colon \RR\to\RR$ given by
\begin{equation}
f(x) :=\begin{cases}
-1 & \text{if }x \leq 0, \\
\frac{1}{2}x^2-1  & \text{if }x >0.
\end{cases}
\end{equation}
Then $f$ is continuously differentiable and hence also strictly differentiable with
\begin{equation}
f'(x) =\begin{cases}
0 & \text{if }x \leq 0, \\
x & \text{if }x >0.\\
\end{cases}
\end{equation}
Therefore, $(\forall x \in \RR)$ $\partial^0\!f(x) =\partial f(x) =\{f'(x)\}$. 
Moreover, $f^{-1}(0) =\{\sqrt{2}\}$, $A\cap B =\{(\sqrt{2},0)\}$ and $(f')^{-1}(0) =\left]-\infty, 0\right]$.
\begin{enumerate}
\item \emph{(Failure of the MAP):} The MAP is not globally convergent to an intersection point. In particular, for any starting point $z_0=(x_0,\rho_0)\in \RR^2$ with $x_0 \ll 0$, we have
     \begin{equation}
       z_1=P_BP_A(z_0)=P_B(x_0,0)=(x_0,-1).
     \end{equation}
  The MAP sequence starting at $z_0$ is therefore given by $z_n=(x_0,-1)\not\in A\cap B$ for every $n\geq 1$.

\item \emph{(Failure of Newton's Method):} Newton's method is also not globally convergent, \emph{a fortiori}, the Newton iteration is not globally well defined. Indeed, for any starting point $z_0 =(x_0,\rho_0)\in \left(-\infty, 0\right]\times\RR$, we have $f'(x_0)=0$ and hence \eqref{eq:newton} is not well defined (division by zero).

\item \emph{(Success of the DRA):} The DRA is globally convergent. To deduce this, note that $f$ is convex, $\dom f =\RR$ is open, and $\inf f(X)=-1 < \min\{0,\sup f(\dom f)\}$ so that, by \cref{c:bounded}, there exists an $n_0\in\mathbb{N}$ such that $f'(x_n) \neq 0$ for every $n\geq n_0$. We may therefore define the set $D$ to be
\begin{equation}
  D:=\dom f \smallsetminus (f')^{-1}(0) =\RR \smallsetminus \left]-\infty, 0\right] =\left]0, +\infty\right].
\end{equation}
 Now, as $f$ is continuously differentiable on $\RR$,  it is locally Lipschitz on $D =\left]0, +\infty\right]$. The function $F$ defined by 
 \begin{equation}
   (\forall x\in D)\quad F(x):=\int \frac{f(x)}{f'(x)}dx =\int \big(\frac{1}{2}x -\frac{1}{x}\big)dx =\frac{1}{4}x^2 -\ln(x),
 \end{equation}
satisfies \cref{a:V}. Furthermore, $\overline{D}\cap f^{-1}(0) =\{\sqrt{2}\} \subseteq D$, $f$ is twice strictly differentiable at $\sqrt{2}$ and $f'(\sqrt{2}) =\sqrt{2} \neq 0$. \cref{c:lincvg}\ref{c:lincvg_Q} therefore applies and we deduce that $z_n \to (\sqrt{2},0) \in A\cap B$ with $Q$-linear rate $1/\sqrt{3}$.\qed
\end{enumerate}
\end{example}

To conclude, we revisit \cref{ex:p norm sign} to compare the DRA with Newton's method.
\begin{example}[DRA vs Newton's method]
Suppose that $X =\RR$ and that $f =\alpha|\cdot|^p\sgn(\cdot)$ with $\alpha \in \RR\smallsetminus \{0\}$ and $p \in \left]0, 1/2\right]$.
As shown in \cref{ex:p norm sign}, $f'(x) =\alpha p|x|^{p-1}$ and $f(x)/f'(x) =\frac{1}{p}x$ whenever $x \neq 0$. 
\begin{enumerate}
\item \emph{(Failure of Newton's method):}~If $x_0 \neq 0$, then the Newton iteration for $f$ is given by
\begin{equation}
(\forall\nnn)\quad x_{n+1} =x_n -\frac{f(x_n)}{f'(x_n)} =x_n -\frac{1}{p}x_n =\big(1 -\frac{1}{p}\big)x_n \neq 0,
\end{equation}
and thus $(\forall\nnn)$ $x_n =(1 -1/p)^n x_0$.
Since $p \in \left]0, 1/2\right]$, we have $1 - 1/p \leq -1$ and the sequence $(x_n)$ is therefore not convergent. 
\item \emph{(Success of the DRA):}~The fact that the DRA converges globally to the unique solution $(0,0)\in A\cap B$ is a special case of \cref{ex:p norm sign}.\qed
\end{enumerate}
\end{example}

\subsection*{Acknowledgments}
This paper is dedicated to the memory of Jonathan M. Borwein and his enthusiasm for the DRA.
MND was partially supported by the Australian Research Council Discovery Project DP160101537. 
He wishes to acknowledge the hospitality and the support of D. Russell Luke during his visit to Universit\"at G\"ottingen. 
MKT was partially supported by the Deutsche Forschungsgemeinschaft Research Training Grant 2088.

\end{document}